\newcommand{\R}{\mathbb{R}}
\def \real    { \mathbb{R} }
\def \reals   { \mathbb{R} }
\newcommand{\e}{\begin{equation}}
\newcommand{\ee}{\end{equation}}
\newcommand{\en}{\begin{equation*}}
\newcommand{\een}{\end{equation*}}
\newcommand{\eqn}{\begin{eqnarray}}
\newcommand{\eeqn}{\end{eqnarray}}
\newcommand{\bmat}{\begin{bmatrix}}
\newcommand{\emat}{\end{bmatrix}}
\DeclareMathAlphabet\mathbfcal{OMS}{cmsy}{b}{n}
\renewcommand{\P}[1]{\operatorname{\mathbb{P}}\left(#1\right)}
\newcommand{\mtx}[1]{\boldsymbol{#1}}
\newcommand{\<}{\langle}
\renewcommand{\>}{\rangle}
\newcommand{\T}{\mathrm{T}}
\newcommand{\rank}{\operatorname{rank}}
\DeclareMathOperator*{\minimize}{\text{minimize}}
\DeclareMathOperator*{\argmin}{\text{arg~min}}
\newcommand{\val}{\textup{val}}
\newcommand{\train}{\textup{train}}
\newcommand{\subG}{\textup{subG}}
\newcommand{\wh}{\widehat}
\newcommand{\wt}{\widetilde}
\newcommand{\norm}[2]{\left\| #1 \right\|_{#2}}
\newcommand{\twonorm}[1]{\left\| #1 \right\|_{2}}
\newcommand{\opnorm}[1]{\left\| #1 \right\|_{\text{\tiny op}}}
\newcommand{\fnorm}[1]{\| #1 \|_{\tiny \mathrm{F}}}
\newcommand{\abs}[1]{\left| #1 \right|}
\newcommand{\bracket}[1]{\left( #1 \right)}
\newcommand{\parans}[1]{\left(#1\right)}
\newcommand{\innerprod}[2]{\left\langle #1,  #2 \right\rangle}
\newcommand{\calA}{\mathcal{A}}
\newcommand{\calD}{\mathcal{D}}
\newcommand{\calI}{\mathcal{I}}
\newcommand{\calO}{\mathcal{O}}
\newcommand{\calS}{\mathcal{S}}
\newcommand{\mX}{\mtx{X}}
\newlength{\imgwidth}
\newcommand{\twoCol}[2]{\ifthenelse{\boolean{twoColVersion}} {#1} {#2} }
\pgfplotsset{compat=1.15}
\newtheorem{theorem}{Theorem}[section]
\newtheorem{lemma}[theorem]{Lemma}
\newtheorem{proposition}[theorem]{Proposition}
\newtheorem{definition}[theorem]{Definition}
\renewcommand{\mathbf}{\boldsymbol}
\renewcommand{\P}{\mathbb{P}}
\newcommand{\inprod}[2]{\langle#1,#2\rangle}
\newcommand{\truX}{X_\natural}
\newcommand{\trur}{r_\natural} 
\newcommand{\Amap}{\calA}
\newcommand{\error}{e} 
\newcommand{\truU}{U_\natural}
\newcommand{\fronorm}[1]{\Vert  #1 \Vert_F}
\newcommand{\specnorm}[1]{\Vert  #1 \Vert}
\newcommand{\nucnorm}[1]{\Vert  #1 \Vert_{\ast}}
\newcommand{\Vnorm}[1]{{\left\vert\kern-0.25ex\left\vert\kern-0.25ex\left\vert #1 
		\right\vert\kern-0.25ex\right\vert\kern-0.25ex\right\vert}}
\newcommand{\Id}{\calI}
\newcommand{\Xt}{X_t}
\newcommand{\Xtplus}{X_{t+1}}
\newcommand{\Ut}{U_t}
\newcommand{\bracing}[2]{\underset{{#1}}{\underbrace{#2}}  }
\newcommand{\UUt}{U_t}
\newcommand{\UUtplus}{U_{t+1}}
\newcommand{\tUt}{\tilde{U}_t}
\newcommand{\Xtp}{X_{t,\perp}}
\newcommand{\tXt}{\tilde{X}_t}
\newcommand{\Dt}{\Delta_t}
\newcommand{\tDt}{\tilde{\Delta}_t}
\newcommand{\WWt}{W_t}
\newcommand{\Wtperp}{W_{t,\perp}}
\newcommand{\cond}{\kappa_f}
\begin{document}
\title{A Validation Approach to Over-parameterized Matrix and Image Recovery}

\author{
Lijun Ding
\thanks{Department of Mathematics, University of California San Diego, La Jolla, CA, USA
		(\texttt{l2ding@ucsd.edu}.) 
}
 \quad 
Zhen Qin \thanks{Department of Computer Science and Engineering, Ohio State University, Columbus, OH, USA (\texttt{qin.660@osu.edu, zhou.3820@osu.edu, zhu.3440@osu.edu}).}
	 \quad 
 Liwei Jiang\thanks{ H. Milton Stewart School of Industrial and Systems Engineering, Georgia Institute of Technology, Atlanta, GA, USA (\texttt{ljiang306@gatech.edu}).}
 \quad 
 Jinxin Zhou \footnotemark[2] 
\quad  
Zhihui Zhu \footnotemark[2] 
}
\maketitle
\begin{abstract}
This paper studies the problem of recovering a low-rank matrix from several noisy random linear measurements. We consider the setting where the rank of the ground-truth matrix is unknown a priori and use an objective function built from a rank-overspecified factored representation of the matrix variable, where the global optimal solutions overfit and do not correspond to the underlying ground truth.
We then solve the associated nonconvex problem using gradient descent with small random initialization. We show that as long as the measurement operators satisfy the restricted isometry property (RIP) with its rank parameter scaling with the rank of the ground-truth matrix rather than scaling with the overspecified matrix rank, gradient descent iterations are on a particular trajectory towards the ground-truth matrix and achieve nearly information-theoretically optimal recovery when it is stopped appropriately. We then propose an efficient stopping strategy based on the common hold-out method and show that it detects a nearly optimal estimator provably. Moreover, experiments show that the proposed validation approach can also be efficiently used for image restoration with deep image prior, which over-parameterizes an image with a deep network.
\end{abstract}

\section{Introduction}
\label{sec:intro}

We consider the problem of recovering a low-rank positive semidefinite (PSD) ground-truth matrix $\truX\in\R^{n\times n}$, a symmetric matrix with all its eigenvalue nonnegative, of rank $\trur$ from $m$ many \emph{noisy} linear measurements:
\begin{equation}\label{eq: psdob}
y = \calA(\truX) + \error,
\end{equation}
where $\calA:\R^{n\times n}\rightarrow \R^m$ is a known linear measurement operator, and $e\in\R^m$ is the additive independent noise with subgaussian entries with a variance proxy $\sigma^2\geq 0$. We denote the observed dataset by $(y,\Amap)$.

\paragraph*{Applications and the factorization approach} Low-rank matrix sensing problems of the form \eqref{eq: psdob} appear in a wide variety of applications, including quantum state tomography, image processing, multi-task regression, metric embedding, and so on \cite{recht2010guaranteed, CandesTight11,liu2011universal,flammia2012quantum,chi2019nonconvex}. A computationally efficient approach that has recently received tremendous attention is to factorize the optimization variable into $X = UU^\T$ with $U\in\R^{n\times r}$ and optimize over the $n\times r$ matrix $U$ rather than the $n\times n$ matrix $X$ \cite{chi2019nonconvex,jain2013low,sun2015guaranteed,chen2015fast,bhojanapalli2016global,tu2016low,ge2016matrix,ge2017no,zhu2018global,li2019non,charisopoulos2021low,zhang2021general}. This strategy is usually referred to as the matrix factorization approach or the Burer-Monteiro type decomposition in~\cite{burer2003nonlinear,burer2005local}. With this parametrization n of $X$, we recover the low-rank matrix $\truX$ by solving
\begin{equation}
\begin{split}
\minimize_{U\in\R^{n\times r}}\; f(U) &:= \frac{1}{2m}\left\|\calA\left(U U^\top\right) - y\right\|_2^2.
\end{split}\label{eq:sensing fact}
\end{equation}
We refer to the parameter $r$ as the \emph{parametrized rank}.

\paragraph*{Overparametrization: definition and its necessity}
We refer to the parametrization $X = UU^\top, U\in \mathbb{R}^{n\times r}$ with $r>\trur$ as \emph{overparametrization} and the parametrization with $r = \trur$ as exact parametrization. When the true rank information $\trur$ is available, we can set $r=\trur$, and it is shown that
simple gradient descent methods
can find a matrix with a statistical error that is minimax optimal up to log factors \cite{chi2019nonconvex,chen2015fast,bhojanapalli2016global}.
However, in practice, the ground-truth rank $\trur$ is usually \emph{unknown a priori}, and it is challenging to precisely identify the rank $\trur$.
Fortunately, one can use overparametrization, $r\geq \trur$, since an upper bound of $\trur$ is often available from domain expertise \cite{chen2018harnessing,chi2019nonconvex} or abundant computational resources. Apart from this practical concern, it is also theoretically interesting to study overparametrization of the matrix factorization approach due to its similarity to modern neural networks, which are almost always overparametrized \cite{du2018algorithmic,li2018algorithmic,huang2019gpipe}. These two concerns have induced a vibrant research direction recently \cite{li2018algorithmic,zhuo2021computational,zhang2021preconditioned,stoger2021small,ma2023provably,jiang2023algorithmic,soltanolkotabi2023implicit,zhang2024fast}. Below, we discuss two primary issues raised by overparametrization in the noisy regime ($\sigma>0$) that the existing literature \cite{zhuo2021computational,zhang2021preconditioned,zhang2024fast} may not adequately address.

\paragraph*{Overparemetrization: the issue of overfitting and non-optimal statistical rate} While overparametrization resolves the issue of unknown true rank $\trur$, it brings more parameters to estimate, hence the classical issue of machine learning: overfitting. Following \cite{zhuo2021computational}, in Figure \ref{fig: experiment_intro_main},\footnote{\label{fn: intro_experiments}Detailed description of the experiments in \Cref{fig: experiment_intro_main} 
and \Cref{fig:overfit} is 
in  \Cref{sec: expeirements_intro}.
} 
we implement a gradient descent (\texttt{GD}) method coupled with the so-called spectral initialization (which enables $U_0U_0^\top \approx \truX)$ for \eqref{eq:sensing fact} proposed in \cite{zhuo2021computational} with output $\hat{U}$ and the recovered matrix $\hat{X}=\hat{U}\hat{U}^\top$, and record the recovery error (or statistical error), $\fronorm{\hat{X} -\truX}^2$ and the training error, $f(\hat{U})$.
The recovery error (solid black line)
increases as the estimated rank $r$ becomes larger than $\trur$, while the training error (dashed black line) decreases. This observation is due to the noise being overfitted. And it is consistent with the guarantees developed in \cite{zhuo2021computational,zhang2021preconditioned,zhang2024fast}, which states the test error scales as $\tilde\calO(\sigma^2nr/m)$\footnote{The notation $\tilde \calO(\cdot)$ hides the dependence of condition number $\kappa = \frac{\sigma_1(\truX)}{\sigma_{\trur}(\truX)}$ of $\truX$ and logarithmic terms of $n,r$ and $\sigma$.}. Note the linear dependence on the estimated rank $r$. While the gradient descent approach can be more computationally efficient compared to the convex approach \cite{candes2011tight}, the latter has a better guarantee in the sense that it produces a statistical error of $\tilde{\calO}(\sigma^2n\trur/m)$, which is nearly minimax optimal \cite[Theorem 2.5]{CandesTight11}.

\begin{figure}[h]
    \centering
    \begin{subfigure}[t]{0.48\textwidth}
        \includegraphics[width = 0.95 \textwidth]{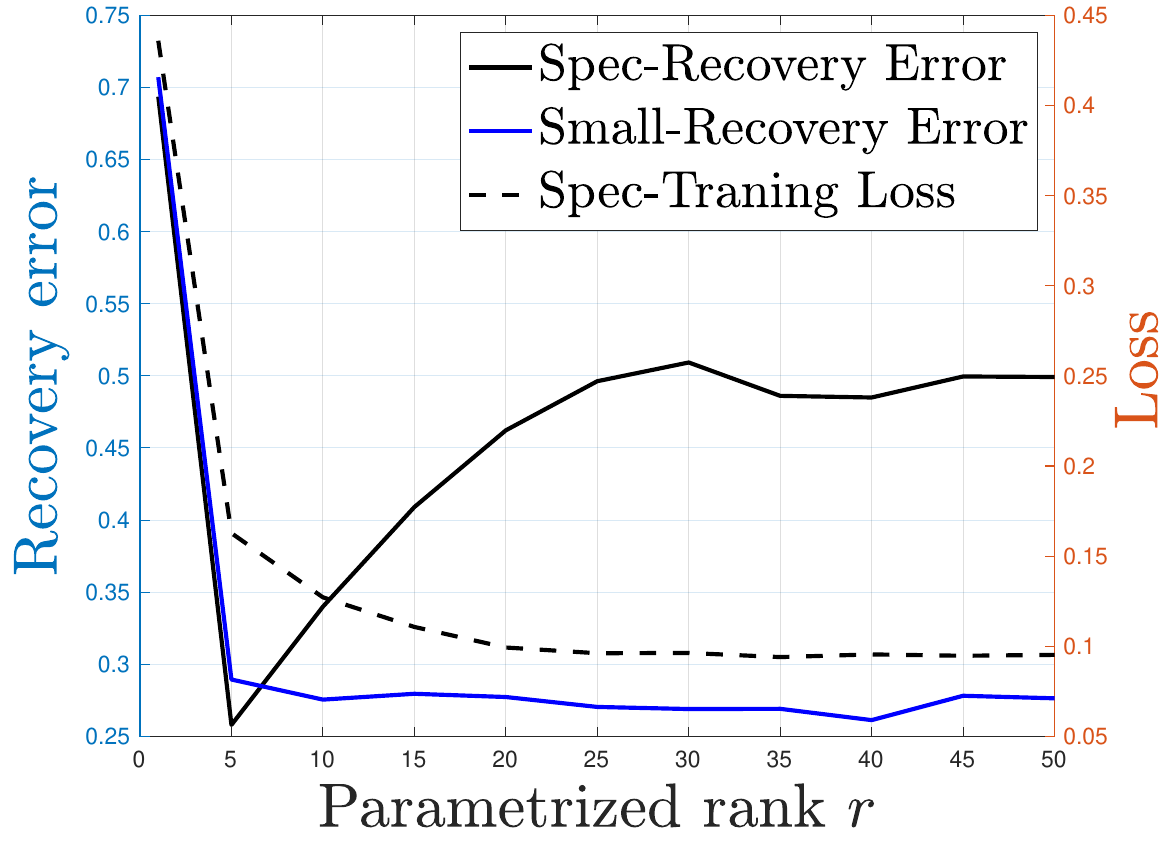}
        \caption{Recovery error and Loss vs parametrized rank}\label{fig: Recover error and Loss vs the parametrized rank}
    \end{subfigure}%
    \begin{subfigure}[t]{0.45 \textwidth}
        \includegraphics[width = 0.95 \textwidth]{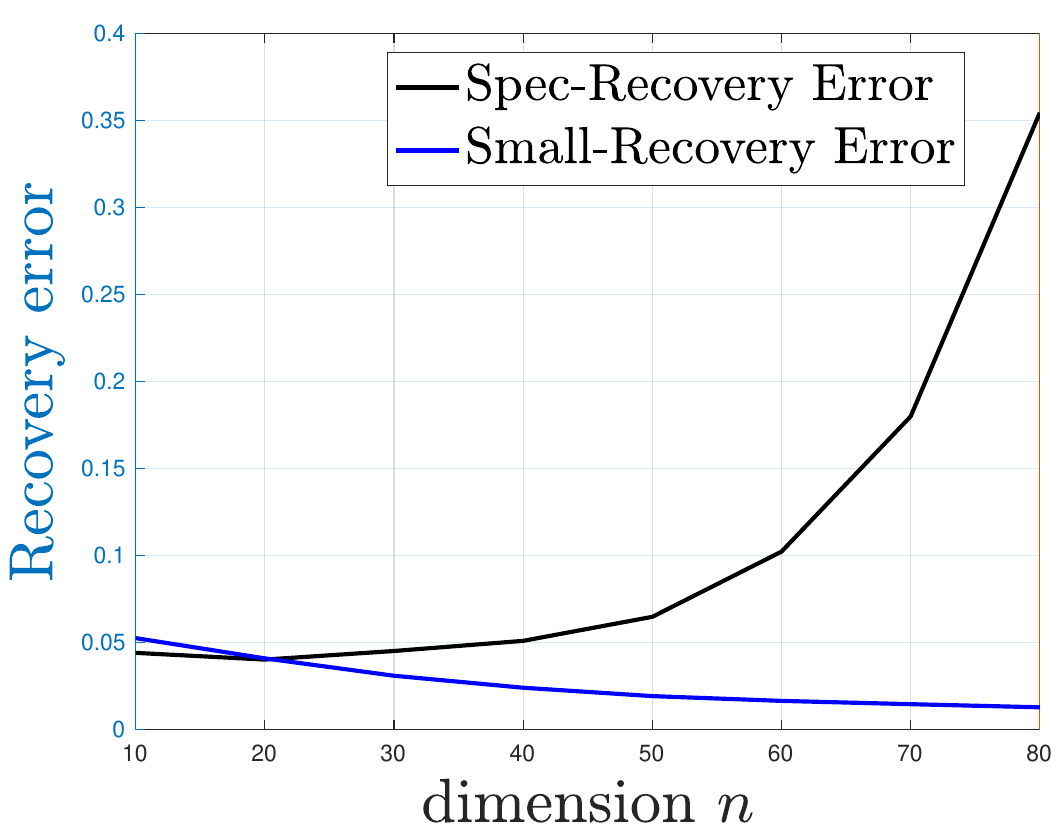}
        \caption{Recovery error vs dimension}
        \label{fig: Recover error vs the dimension}
    \end{subfigure}
    \caption{Comparison of our methods and the method in \cite{zhuo2021computational}: (a) In Figure \ref{fig: Recover error and Loss vs the parametrized rank}, the black solid and dashed line represents the recovery error $\fronorm{\hat{X}-\truX}$ and the training loss $f(\hat{U})$ respectively for the estimator $\hat{X} = \hat{U}\hat{U}^\top$ given in \cite{zhuo2021computational} when the parametrized rank $r$ varies from $0$ to $50$. The blue solid shows the recovery error given by our estimator. Here, $n=50$, $m=1000$, $\sigma = 0.3$, and $\trur =5$. (b) In Figure \ref{fig: Recover error vs the dimension}, the black line and the blue line show the recovery error of the estimator given in \cite{zhuo2021computational} and this paper respectively. Here $m =n\trur$, $\sigma =1/n$, $\trur = 5$, and $n$ varies from $10$ to $80$. The experiment details in each figure is in \Cref{sec: expeirements_intro}.}\label{fig: experiment_intro_main}
\end{figure}

\paragraph*{Overparemetrization: the issue of stronger requirement on $\Amap$} Apart from overfitting, another issue for existing analysis of gradient descent methods coupled with spectral initialization for solving \eqref{eq:sensing fact} as those in \cite{zhuo2021computational,zhang2021preconditioned,zhang2024fast} is that the operator $\calA$ has to satisfy  \emph{$2r$-RIP} (see Definition~\ref{def: RIP} for the formal definition of RIP).
In general, the larger $r$ is, the stronger the condition is, and the more measurements are needed to ensure $\calA$ satisfying $2r$-RIP \cite{candes2011tight}. However, it is known that for the convex approach, only $2\trur$-RIP is needed \cite{candes2011tight}. We note that the requirement on $\calA$ is not merely a theoretical defect. In Figure \ref{fig: Recover error vs the dimension}, we set the parametrized rank $r=n$, the noise parameter $\sigma= 1/n$, $m = 4 n\trur$, and perform the method developed \cite{zhuo2021computational}.\textsuperscript{\ref{fn: intro_experiments}} Note that the theoretical recovery error in \cite{zhuo2021computational}, $\tilde\calO(\sigma^2nr/m) = \tilde\calO(1/n\trur)$ with the above choice of $\sigma$, $m$, and $r$. However, we can see that the error (the black line) increases as the dimension $n$ increases, meaning that $\calA$ fails the $2r$-RIP. 

\paragraph*{Overview of our methods and contributions}
This paper addresses the above issues, the overfitting and the stronger requirement of $\calA$, in the over-parameterized noisy matrix recovery problem \eqref{eq:sensing fact}. More precisely, by utilizing recent algorithm design and analysis advancements for overparametrized matrix sensing \cite{stoger2021small,jiang2023algorithmic,soltanolkotabi2023implicit}, particularly \cite{stoger2021small},
we
show that gradient descent ({\texttt{GD}) with small random initialization (\texttt{SRI}) generates iterations toward the ground-truth matrix and achieves
nearly information-theoretically optimal recovery within $\tilde\calO(1)$ steps.
\begin{theorem}[Informal]\label{thm: informalThmStatErrorEarly}
Suppose that $\calA$ satisfies $2\trur$-RIP and consider gradient descent (\emph{\texttt{GD}}) with small random initialization (\emph{\texttt{SRI}}) and any  $r\geq \trur$. Within the first $\tilde \calO(1)$ steps, one of the iterates achieves statistical error of $\tilde\calO(\sigma^2n \trur/m)$  
\end{theorem}
As summarized in \Cref{tab:comparison}, our work improves upon \cite{zhuo2021computational,zhang2021preconditioned,zhang2024fast} by showing that gradient descent can achieve minimax optimal statistical error $\tilde\calO(\sigma^2n \trur/m)$ with only $2\trur$-RIP requirement on $\calA$, even in the extreme over-parameterized case $r = n$. 
Our work shows that vanilla \texttt{GD} with \texttt{SRI} rather than the spectral initialization actually has an iterate achieving statistically optimal recovery within the first $\tilde \calO(1)$ steps.

\begin{table}[t]
\centering
\caption{Comparison with prior theory for over-parameterized noisy matrix sensing \eqref{eq:sensing fact}. Here $\wh X$ denotes the recovered matrix, $\sigma^2$ is the variance proxy for the additive noise.}
\label{tab:comparison}
\resizebox{\textwidth}{!}{%
\begin{tabular}{c|c|c|c}
 &
  RIP requirement &
  Initialization &
  \begin{tabular}[c]{@{}c@{}}Statistical error\\ $\fnorm{\wh X - \truX}^2$\end{tabular}
  \\ \hline
\begin{tabular}[c]{@{}c@{}}\cite{zhuo2021computational,zhang2021preconditioned,zhang2024fast}\\ ($ \trur\le r \le \tilde\calO(m/n)$)\end{tabular}
&
  $2r$-RIP &
  \begin{tabular}[c]{@{}l@{}}Spectral initialization:\\ $U_0U_0^\top$ close to $\truX$\end{tabular} &
  $\tilde\calO(\sigma^2n r/m) $ \\ \hline
\begin{tabular}[c]{@{}c@{}}
Ours\\ ($r \ge \trur $)
\end{tabular} 
&
  $2\trur$-RIP &
  Small random initialization &
  $\tilde\calO(\sigma^2n \trur/m)$
\end{tabular}%
}
\end{table}

However, \texttt{GD} with \texttt{SRI} alone is not practical. Because the method so far could not identify the iterate achieves the minimax error and will eventually overfit the noise if not stopped properly (See \Cref{fig:overfit}). To this end, we introduce a practical and efficient stopping strategy for over parametrized matrix sensing based on the classical validation set approach. In particular, we hold out a subset $(y_\val, \calA_\val)$ from the data set when training our model. We then use the validation loss $\frac{1}{2}\left\|\calA_\val\left(U_t U_t^\top\right) - y_\val\right\|_2^2$ to monitor the recovery error $\frac{1}{2}\|U_tU_t^\top -\truX\|_F^2$ (see red and black curves in \Cref{fig:overfit}) and detect the valley of the recovery error curve.

\begin{wrapfigure}{r}{0.37\textwidth}
\vspace{-.2in}
   \centering
   \includegraphics[width=\linewidth]{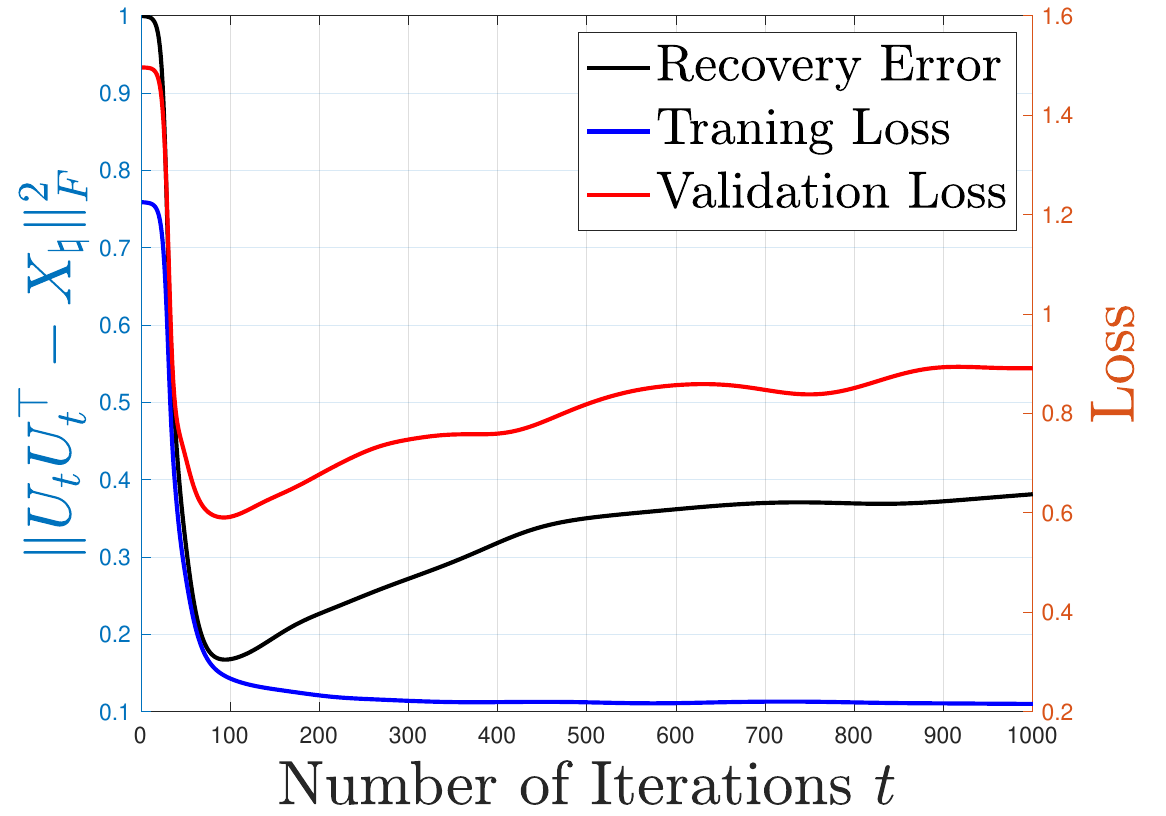}
   \caption{Gradient descent for the over-parameterized matrix recovery \eqref{eq:sensing fact} with $r = n$: training loss (blue curve) refers to $f(U_t)$ in \eqref{eq:sensing fact}, while the validation loss (red curve) refers to the same loss but on the validation data.}
   \label{fig:overfit}
   \vspace{-.3in}
\end{wrapfigure}

\begin{theorem}[Informal]
\label{thm:}
The validation approach identifies an iterate with a statistical error of $\tilde\calO(\sigma^2n \trur/m)$, which is minimax optimal.
\end{theorem}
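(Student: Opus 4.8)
The plan is to exploit the one structural feature of the hold-out split --- that $(\calA_\val,\error_\val)$ is drawn independently of the training pair $(\calA_\train,y_\train)$, hence of every iterate $U_t$ --- together with two facts imported from the trajectory analysis behind \Cref{thm: informalThmStatErrorEarly}. Conditioning on the training data freezes the whole trajectory $\{U_t\}$ while leaving $\calA_\val$ and $\error_\val$ random; this is all the split is used for, and it costs only constants, since after splitting $\calA_\train$ still satisfies $2\trur$-RIP (so \Cref{thm: informalThmStatErrorEarly} applies to the training run) and $\calA_\val$ satisfies RIP with rank parameter $\Theta(\trur)$ once $m_\val=\Theta(m)\gtrsim n\trur\log n$. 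From the trajectory analysis I borrow: (i) a window $[T_1,T_2]$ with $T_2=\tilde\calO(1)$ containing an iterate $t^\star$ with $g(t^\star):=\fnorm{U_{t^\star}U_{t^\star}^\top-\truX}^2=\tilde\calO(\sigma^2n\trur/m)$; and (ii) a decomposition $U_tU_t^\top=L_t+N_t$, valid for \emph{every} $t\le T_2$, with $\rank(L_t)\le\trur$ and with $\fnorm{N_t},\nucnorm{N_t}$ small (the directions excited by the tiny initialization have not grown appreciably over the horizon on which gradient descent is run).

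\smallskip\noindent First I would set up a deterministic comparison between the centered validation loss $v(t):=\tfrac12\|\calA_\val(U_tU_t^\top)-y_\val\|_2^2-\tfrac12\|\error_\val\|_2^2$ and the recovery error $g(t):=\fnorm{U_tU_t^\top-\truX}^2$. Write $Z_t:=U_tU_t^\top-\truX$ and $W_t:=L_t-\truX$, so $\rank(W_t)\le 2\trur$ and $Z_t=W_t+N_t$. Applying the RIP lower and upper bounds of $\calA_\val$ to $W_t$, the triangle inequality, and the standard consequence of $2\trur$-RIP that $\|\calA_\val(N)\|_2\lesssim\sqrt{m_\val}\,(\fnorm N+\nucnorm N/\sqrt\trur)$ for \emph{arbitrary} $N$, one gets a two-sided bound
\begin{equation*}
\tfrac{m_\val}{16}\,g(t)-\mathrm{tail}\;\le\;\tfrac12\|\calA_\val(Z_t)\|_2^2\;\le\;2m_\val\,g(t)+\mathrm{tail}
\end{equation*}
simultaneously for all $t\le T_2$, where $\mathrm{tail}$ collects the residual errors $\tilde\calO\big(m_\val\fnorm{N_t}^2+(m_\val/\trur)\nucnorm{N_t}^2\big)$; by (ii) it will satisfy $\mathrm{tail}=\tilde\calO(\sigma^2n\trur)$. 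For the cross term in $v(t)=\tfrac12\|\calA_\val(Z_t)\|_2^2-\langle\calA_\val(Z_t),\error_\val\rangle$, condition on the training data and on $\calA_\val$: then $\langle\calA_\val(Z_t),\error_\val\rangle=\sum_{i\le m_\val}\langle A_i,Z_t\rangle(\error_\val)_i$ is a weighted sum of independent $\sigma$-subgaussian variables, hence $\sigma\|\calA_\val(Z_t)\|_2$-subgaussian, and a tail bound with a union bound over the $T_2=\tilde\calO(1)$ iterates yields, with high probability and for all $t\le T_2$ at once,
\begin{equation*}
\big|\langle\calA_\val(Z_t),\error_\val\rangle\big|\;\le\;\sigma\sqrt{2\|\calA_\val(Z_t)\|_2^2\log(T_2/\delta)}\;\lesssim\;\sigma\sqrt{\big(m_\val\,g(t)+\mathrm{tail}\big)\log(T_2/\delta)}.
\end{equation*}
Combined, the last two displays sandwich $v(t)$ between $\Theta(m_\val)\,g(t)$ plus or minus a term of order $\sigma\sqrt{(m_\val g(t)+\mathrm{tail})\log}+\mathrm{tail}$ --- the rigorous version of the phenomenon in \Cref{fig:overfit} that the validation curve shadows the recovery curve.

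\smallskip\noindent The selection step is then routine. Put $\wh t:=\argmin_{t\le T_2}v(t)$; since gradient descent is run only for the horizon $T_2$, which by (ii) is the regime in which the iterates stay near-low-rank, the selected iterate cannot be a late, high-rank, overfitting one. Optimality of $\wh t$ gives $v(\wh t)\le v(t^\star)$; substituting the lower bound on $v(\wh t)$ and the upper bound on $v(t^\star)$ produces a quadratic inequality in $\sqrt{g(\wh t)}$, and dividing by $m_\val$ and using $2ab\le\tfrac{1}{32}a^2+32b^2$ to absorb the $\sqrt{g(\wh t)}$ term gives
\begin{equation*}
g(\wh t)\;\lesssim\;g(t^\star)+\frac{\mathrm{tail}}{m_\val}+\frac{\sigma\sqrt{\mathrm{tail}\,\log(T_2/\delta)}}{m_\val}+\frac{\sigma^2\log(T_2/\delta)}{m_\val}.
\end{equation*}
With $m_\val=\Theta(m)$, $\mathrm{tail}=\tilde\calO(\sigma^2n\trur)$, and $g(t^\star)=\tilde\calO(\sigma^2n\trur/m)$, every term on the right is $\tilde\calO(\sigma^2n\trur/m)$, so $g(\wh t)=\tilde\calO(\sigma^2n\trur/m)$. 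Minimax optimality then follows immediately from the lower bound $\inf_{\wh X}\sup\E\,\fnorm{\wh X-\truX}^2\gtrsim\sigma^2n\trur/m$ of \cite[Theorem~2.5]{CandesTight11}.

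\smallskip\noindent I expect the main obstacle to be fact (ii): the \emph{uniform} and \emph{quantitative} control of the residual $N_t$ --- over the whole horizon $[0,T_2]$, not merely inside the good window, and in both Frobenius and nuclear norm --- tight enough that $\mathrm{tail}/m_\val=\tilde\calO(\sigma^2n\trur/m)$. This is exactly what lets $\calA_\val$ act as an approximate isometry on $Z_t$ for \emph{every} candidate iterate; without it, the naive estimate $\|\calA_\val(Z_t)\|_2^2\le\tilde\calO(m_\val)\fnorm{Z_t}^2$ is not delivered by the $2\trur$-RIP hypothesis once $\rank(Z_t)$ exceeds $\Theta(\trur)$, and the lower bound on $v(\wh t)$ --- the one place the argument can break --- is lost. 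Everything else (the deterministic sandwich, the subgaussian tail, the union bound over $\tilde\calO(1)$ iterates, and the final quadratic) is elementary.
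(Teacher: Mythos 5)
Your overall architecture --- condition on the training data so that the iterates are frozen, exploit the independence of $(\calA_\val,e_\val)$, sandwich the validation loss by the recovery error uniformly over the $T$ candidate iterates, and finish with the $\argmin$ comparison --- is the same as the paper's, and your "routine" selection step is essentially Lemma \ref{lemma: validation1} verbatim. Where you diverge, and where your proposal acquires its one genuine gap, is the concentration step. You prove the sandwich by applying RIP of $\calA_\val$ to a low-rank part $W_t$ and a spectral-to-nuclear-type bound to a residual $N_t$, which forces you to import a uniform, quantitative decomposition $U_tU_t^\top=L_t+N_t$ with $\nucnorm{N_t}$ controlled over the entire horizon --- the "fact (ii)" you correctly flag as the main obstacle. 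The paper needs none of this. Since each $D_t=U_tU_t^\top-\truX$ is a \emph{fixed} matrix once you condition on the training data, $\langle A_i,D_t\rangle+e_i$ is subgaussian with variance $\fnorm{D_t}^2+\sigma^2$ for \emph{arbitrary} $D_t$ of arbitrary rank, so $\|\calA_\val(D_t)+e_\val\|_2^2$ concentrates around $m_\val(\fnorm{D_t}^2+\sigma^2)$ by Bernstein's inequality, and a union bound over the $T$ iterates costs only $\log T$ in $m_\val$ (Lemma \ref{lem:valid-RIP}). No RIP of $\calA_\val$, no rank or nuclear-norm control of the iterates, and no requirement $m_\val\gtrsim n\trur\log n$ is ever invoked for the validation operator; the only condition is $m_\val\gtrsim \log T/\delta_\val^2$ with $\delta_\val=\kappa^2 n\trur/m_{\text{train}}$, chosen so that the additive $\delta_\val\sigma^2$ slack in Lemma \ref{lemma: validation1} matches the target error $\tilde\calO(\sigma^2 n\trur/m)$.

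So the obstacle you identify is real for your route but self-inflicted: the lower bound on $v(\wh t)$ is not "the one place the argument can break" once you notice that pointwise concentration for fixed matrices plus a union bound already gives the two-sided inequality for every $t$, including late, high-rank, overfitting iterates. As written, your proof is incomplete at fact (ii) (uniform Frobenius- and nuclear-norm control of $N_t$ over all of $[0,T_2]$ tight enough that $\mathrm{tail}/m_\val=\tilde\calO(\sigma^2 n\trur/m)$ is asserted, not established, and is not what the paper's Lemma \ref{thm:PhaseIII} provides for every $t$ up to the horizon); replacing your RIP-based sandwich by the paper's Lemma \ref{lem:valid-RIP} closes the gap and the rest of your argument goes through unchanged.
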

This result shows that the gap (in terms of recovery error) between our detected iterate and the best iterate is typically small, which is further verified by \Cref{fig:overfit} and experiments in \Cref{sec:exp}.
In Figure \ref{fig: experiment_intro_main}, we also numerically see that the iterates identified by the validation approach (the blue curves) are not sensitive to the parametrized rank and obey the minimax error rate $\tilde\calO(\sigma^2n \trur/m)$. Henceforth, the two issues are now addressed theoretically and practically.

\paragraph{Relationship to the noiseless case and the classical validation approach} We note that it is expected that the results from the noiseless case \cite{li2018algorithmic,stoger2021small} can be extended to the noisy case to a certain extent. What is surprising to us is that the extension is actually statistically optimal. Achieving this requires careful and delicate handling of the error caused by the noise and different techniques beyond the noiseless case (see detailed explanation in \ref{sec: PhaseIIandIII}). As for stopping via a validation set, though it is a common practice, due to the hardness of trajectory analysis and nonconvexity, it usually lacks some theoretical guarantees. In this work, we show that this simple approach is provably effective, and in fact optimal, in the setting of overparamtrized matrix sensing.

\paragraph*{Extension to image recovery with a deep image prior (DIP)} Learning over-parameterized models is becoming an increasingly important area in machine learning. Beyond low-rank matrix recovery problem, over-parameterized model has also been formally studied for several other important problems, including compressive sensing \cite{vaskevicius2019implicit,zhao2019implicit} (which also uses a similar validation approach), logistic regression on linearly separated data \cite{soudry2018implicit}, nonlinear least squares \cite{oymak2019overparameterized}, deep linear neural networks and matrix factorization \cite{arora2019implicit,gunasekar2018implicit,liu2022robust,jiang2023algorithmic}, deep image prior \cite{ulyanov2018deep,wang2021early} and so on. Among these, the \emph{deep image prior} (DIP) is closely related to the matrix recovery problem. It over-parameterizes an image by a deep network, whcih is a non-linear multi-layer extension of the factorization $UU^\top$. While DIP has shown impressive results on image recovery tasks, it requires appropriate early stopping to avoid overfitting. The works \cite{wang2021early,li2021self} propose an early stopping strategy by either training a coupled autoencoder or tracking the trend of the variance of the iterates, which are more complicated than the validation approach. In \Cref{sec:exp}, we demonstrate by experiments that the proposed validation approach,
\begin{center}
\emph{partitioning the image pixels into a training set and a validation set,}
\end{center}
 can be used to efficiently identify appropriate stopping for DIP. The novelty lies in partitioning the image pixels, traditionally considered one giant piece for training. 

\paragraph*{Paper organization} The rest of the paper is organized as follows. We conclude this introduction with a related work section to better position our work in the literature. We also include a notation paragraph explaining the notation used in this paper. In Section \ref{sec:AlgorithmAndAnalysis}, we describe gradient descent with small random initialization in detail and the formal version of our first informal theorem, Theorem \ref{thm: mainOptError}, that GD with SRI has a minimax optimal iterate. Next, in Section \ref{sec: EarlyStoping}, we explain the validation approach for early stopping and the formal statement, Theorem
\ref{thm:validation}, of our second informal theorem, that one can identify which iterate in GD with SRI is minimax optimal.  After the theoretical analysis, in Section \ref{sec:exp}, we proceed with numerical verification of our theoretical results. We further extend our methodology to matrix completion and deep image prior, showing encouraging results. We summarize the paper in \Cref{sec: conclusion} and provide future directions.

\subsection{Related work and notation}
Compared to the relatively mature literature of exact-parameterized case $r = \trur$ and the convex optimization approach to matrix sensing, the literature on overparametrized matrix sensing is more vibrant and proliferating. We refer the reader to \cite[Section 5]{chen2018harnessing} and \cite{chi2019nonconvex} for an overview of exact parametrization, and to \cite[Section 4]{chen2018harnessing} and \cite[Chapter 9]{wainwright2019high} for comprehensive summaries of the convex approaches. In the following, we divide the discussion according to the noise level: the noiseless regime ($\sigma =0$) and the dense noise regime ($\sigma>0$). 
Additional discussion on related work on sparse and additive noise, not modeled in this paper, can be found in \Cref{sec: Ad_RW}.

\begin{table*}[ht]
\centering
\caption{Summary of the related work and the position of this work.}
\label{tab: position_related_work}
\begin{tabular}{|c|c|c|}
\hline
& small random initialization & spectral initialization\\
\hline
Noiseless regime ($\sigma =0$) & \cite{li2018algorithmic,stoger2021small,soltanolkotabi2023implicit,ma2023provably} & \cite{zhuo2021computational,zhang2021preconditioned,zhang2024fast}\\
\hline
Noisy regime ($\sigma >0$) & This work & \cite{zhuo2021computational,zhang2021preconditioned,zhang2024fast}\\
\hline
\end{tabular}%
\end{table*}

\paragraph*{Noiseless regime} In the noiseless regime, the work \cite{li2018algorithmic,stoger2021small,soltanolkotabi2023implicit,ma2023provably} shows that among the first few iterates of gradient descent (\texttt{GD}) with small random initialization(\texttt{SRI}) for \eqref{eq:sensing fact}, there is one close to the ground-truth matrix $\truX$, where the closeness can be made arbitrarily small by changing the initialization scale. More precisely, \cite{li2018algorithmic} deals with the case $r=n$, while \cite{stoger2021small} deals with the general case $r>\trur$, and \cite{soltanolkotabi2023implicit} further extends the analysis to the ground-truth matrix that is not necessarily PSD. In \cite{ma2023provably}, the authors deal with the issue caused by the condition number of $\truX$. In these works, the operator $\calA$ is only required to satisfy $2\trur$-RIP. The principal idea behind \cite{li2018algorithmic,stoger2021small,soltanolkotabi2023implicit,ma2023provably} is that even though there exists an infinite number of solutions $U$ such that $\calA(UU^\top) = y = \calA(\truX)$, \texttt{GD} with \texttt{SRI} produces an \emph{implicit bias} towards low-rank solutions \cite{gunasekar2017implicit}.
Nevertheless, such a conclusion cannot be directly applied to the noisy case since a $U$ can overfit the noise and does not produce the desired solution $\truX$. As shown in \Cref{fig:overfit}, while the training loss (blue curve) keeps decreasing, the iterates $U_t$ eventually tend to overfit as indicated by the recovery error (black curve). By providing a practical stopping rule and extending the analysis to the noisy regime, our work shows that \texttt{GD} with \texttt{SRI} can find a minimax optimal estimator when stopped using the validation approach.

\paragraph*{Noisy regime} In the noisy regime, the works \cite{zhuo2021computational,zhang2021preconditioned,zhang2024fast} follow the more classical initialization method, the so-called \emph{spectral initialization}, and then use the gradient descent method or its variants. As discussed earlier, the guarantees reflect the overfitting of the algorithm and require $2r$-RIP of the operator $\calA$. It is worth noting that even in the noiseless regime, these algorithms require $2r$-RIP condition on $\calA$. The later work \cite{zhang2021preconditioned,zhang2024fast} elegantly resolves the issue of the slower convergence of vanilla gradient descent in the overparametrization regime when the number of iterations approaches infinity, observed in \cite{zhuo2021computational,ding2021rank,zhang2021preconditioned, davis2024local}, and theoretically confirmed in \cite{xiong2023over}. However, in the noisy regime, such quick convergence of methods in \cite{zhang2021preconditioned,zhang2024fast} means overfitting quickly, though it is mild when $r$ is larger than but close to $\trur$.

We summarize the prior discussion in Table \ref{tab: position_related_work}. In short, our work follows the small initialization strategy used primarily in the noiseless regime and fills the missing piece in the noisy regime. Furthermore, as mentioned earlier, it provides a practical way to find an estimator that provably achieves minimax error in a logarithmic number of steps.

\paragraph*{Notions and notations}  We denote the singular values of a matrix $X\in \real^{n\times n}$ by $\sigma_{1}(X)\geq \dots \geq \sigma_n(X)$. The condition number of $\truX$ is denoted as $\kappa =\frac{\sigma_1(\truX)}{\sigma_{\trur}(\truX)}$. For any matrix $Z$, we use $\specnorm{Z}$, $\fnorm{Z}$, $\nucnorm{Z}$ to denote its spectral norm (the largest singular value), the Frobenius norm (the $\ell_2$ norm of the singular values, and the nuclear norm (the sum of singular values). We equip the space $\real^{m}$ with the dot product and the space $\real^{n\times n}$ with the standard trace inner product. The corresponding norms are the $\ell_2$ norm $\twonorm{\cdot}$ and $\fronorm{\cdot}$ respectively.
For a subspace $L\subset \real^{n}$, we use $V_L$ to denote an orthonormal representation of the subspace space of $L$, i.e., $V_L$ has $\dim(L)$ orthonormal columns and the columns span $L$. Moreover, we denote a representation of the orthogonal space of $L$ by $V_{L^\perp}$. We use the same notations $V_L$ and $ V_{L^\perp}$  representing the column space of a matrix $L$ and its orthogonal complement, respectively. For two quantities $a,b\in \real$, the inequalities $a\gtrsim b$ and $b\lesssim a$ mean $b\leq c a$ for some universal constant $c$. A random variable $Z$ with mean $\mu$ is subgaussian with variance proxy $\sigma$, denoted as $Z\sim \text{subG}(\sigma)$, if $\mathbb{E}(\exp(\lambda(Z-\mu)))\leq \exp(\sigma^2 \lambda^2/2)$ for any $\lambda \in \real$.

\section{\texttt{GD} with \texttt{SRI} has a minimax optimal iterate}\label{sec:AlgorithmAndAnalysis}

In this section, we first present gradient descent for Problem \eqref{eq:sensing fact} and a few preliminaries. Next, we present the main result that gradient descent coupled with small
random initialization has an iterate achieving optimal
statistical error.
\subsection{Gradient descent and preliminaries}
Gradient descent proceeds as follows: pick a stepsize $\eta>0$, an initialization direction $U\in \real^{n \times r}$ and a size $\alpha >0$, 
\begin{equation}
\label{eq: gd}
 \text{initialize} \;U_0 = \alpha U,\; \text{and iterate}\;  U_{t+1} = U_t - \eta \nabla f(U_t),
\end{equation}
where $\nabla f(U_t) = \frac{1}{m}\cdot \left( \calA^*(\calA(U_tU_t^\top - \truX))- \calA^*(e)\right)U_t$. Here, $\Amap^*$ is the adjoint operator of $\Amap$.

To analyze the behavior of the gradient descent method \eqref{eq: gd}, we consider the following restricted isometry condition \cite{recht2010guaranteed}, which states that the map $\Amap$ approximately preserves the $\ell_2$ norm between its input and output spaces.
\begin{definition}\label{def: RIP}[$(k,\delta)$-RIP]
A linear map $\Amap:\reals^{n\times n}\rightarrow \reals^{m}$ satisfies $(k,\delta)$ restricted isometry property (RIP) for some integer $k>0$ and $\delta\in [0,1]$ if for any matrix $X\in \real^{n\times n}$ with $\rank(X)\leq k$, the following inequalities hold:
\begin{equation}
    (1-\delta)\fronorm{X}^2 \leq \frac{\twonorm{\Amap(X)}^2}{m}\leq (1+\delta)\fronorm{X}^2.
\end{equation}
\end{definition}
Let $\Amap(Z) = (\inprod{A_1}{Z},\dots,\inprod{A_m}{Z})^\top$ for any $Z\in \real^{n\times n}$. According to \cite[Thereom 2.3]{CandesTight11}, the $(k, \delta)$-RIP condition is satisfied with high probability if each sensing matrix $A_i$ contains iid subgaussian entries and $m\gtrsim \frac{nk \log(1/\delta)}{\delta^2}$.

Thus, if a linear map $\Amap$ satisfies RIP, then $\frac{1}{m}\twonorm{\Amap(X)}^2$ is approximately equal to $\fronorm{X}^2$ for $X$ with low rank. Additionally, the nearness in the function value actually implies the nearness in the gradient under certain norms, as the following proposition states.
\begin{proposition}\cite[Lemma 7.3]{stoger2021small}\label{prop: spectofronucbound}
Suppose $\Amap$ satisfies $(k,\delta)$-RIP. Let $\Id$ denote the identity map on $\mathbb{R}^{d \times d}$. Then for any matrix $X\in \real^{n\times n}$ with rank no more than $k$, and any matrix $Z\in \real^{n\times n}$ we have the following two inequalities:
\begin{align}
    \specnorm{(\Id-\frac{\Amap^*\Amap}{m})(X)}\leq \delta \fronorm{X}, \label{eq: specFro}\\
    \specnorm{(\Id-\frac{\Amap^*\Amap}{m})(Z)}\leq \delta \nucnorm{Z}. \label{eq: specNuc}
\end{align}
\end{proposition}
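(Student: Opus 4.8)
\textbf{Proof proposal for Proposition~\ref{prop: spectofronucbound}.}

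The plan is to prove both inequalities via a common duality argument: for any matrix $M$, $\specnorm{M} = \sup\{\inprod{M}{uv^\T} : u,v\in\real^n, \twonorm{u}=\twonorm{v}=1\}$, and the test matrices $uv^\T$ are rank-one, hence covered by RIP regardless of the rank of the argument inside the operator. So first I would fix unit vectors $u,v$ and write $\inprod{(\Id - \frac{\Amap^*\Amap}{m})(X)}{uv^\T} = \inprod{X}{uv^\T} - \frac{1}{m}\inprod{\Amap(X)}{\Amap(uv^\T)}$, using that $\Amap^*$ is the adjoint. The goal is to bound this by $\delta\fronorm{X}$ (for \eqref{eq: specFro}) or $\delta\nucnorm{X}$ (for \eqref{eq: specNuc}).

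For \eqref{eq: specFro}, where $\rank(X)\le k$, the key tool is the polarization identity applied to RIP. Since both $X$ and $uv^\T$ have rank at most $k$ (note $uv^\T$ has rank one, and we may assume $k\ge 1$; strictly we need $X + t\,uv^\T$ to have rank $\le k$, which fails in general, so the cleaner route is the standard RIP corollary), the step is to invoke the well-known consequence of $(k,\delta)$-RIP that $|\frac{1}{m}\inprod{\Amap(A)}{\Amap(B)} - \inprod{A}{B}| \le \delta \fronorm{A}\fronorm{B}$ for all $A,B$ with $\rank(A) + \rank(B) \le k$ — actually the sharper and commonly used version needs $\rank(A),\rank(B)\le k$ but with a possibly slightly larger constant, or one restricts attention so that $A,B$ together have rank $\le k$; I would cite the version that matches the RIP constant used here (this is the standard "approximate orthogonality" lemma, e.g.\ from \cite{recht2010guaranteed}). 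Taking $A = X$, $B = uv^\T$ gives $|\inprod{(\Id-\frac{\Amap^*\Amap}{m})(X)}{uv^\T}| \le \delta\fronorm{X}\fronorm{uv^\T} = \delta\fronorm{X}$, and taking the supremum over $u,v$ yields \eqref{eq: specFro}. The subtlety about whether the rank budget is $k$ or $k+1$ for the pair $(X, uv^\T)$ is the one genuinely delicate point; I expect Stöger--Soltanolkotabi handle it either by assuming RIP at level $k$ suffices for rank-$k$ plus rank-one test matrices, or by a direct argument that doesn't inflate the constant. This rank-bookkeeping is the main obstacle — everything else is mechanical.

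For \eqref{eq: specNuc}, where $Z$ is arbitrary, I would decompose $Z$ by its SVD, $Z = \sum_{i=1}^n \sigma_i(Z)\, a_i b_i^\T$, so that $\nucnorm{Z} = \sum_i \sigma_i(Z)$ and each $a_i b_i^\T$ is rank one. Then by linearity of $\Id - \frac{\Amap^*\Amap}{m}$ and the triangle inequality for $\specnorm{\cdot}$,
\[
\specnorm{(\Id - \tfrac{\Amap^*\Amap}{m})(Z)} \;\le\; \sum_{i=1}^n \sigma_i(Z)\,\specnorm{(\Id - \tfrac{\Amap^*\Amap}{m})(a_i b_i^\T)} \;\le\; \sum_{i=1}^n \sigma_i(Z)\,\delta\fronorm{a_i b_i^\T} \;=\; \delta\nucnorm{Z},
\]
where the middle inequality is exactly \eqref{eq: specFro} applied to the rank-one matrix $a_i b_i^\T$ (which requires only $k\ge 1$). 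This second part is short and clean once the first is in hand. So the overall structure is: (i) duality reduction of spectral norm to rank-one test matrices; (ii) the approximate-orthogonality corollary of RIP for \eqref{eq: specFro}; (iii) SVD plus triangle inequality to bootstrap to the nuclear-norm bound \eqref{eq: specNuc}.
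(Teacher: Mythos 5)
The paper gives no proof of this proposition; it is quoted from \cite[Lemma 7.3]{stoger2021small}, and your argument is the standard one behind that lemma: duality of the spectral norm against rank-one test matrices, the polarization consequence of RIP for \eqref{eq: specFro}, and SVD plus the triangle inequality to bootstrap to \eqref{eq: specNuc}. The second half is complete as written and needs only $k\ge 1$. The rank-budget issue you flag is the one substantive point, and it is a genuine off-by-one in the statement as transcribed rather than a repairable slip in your argument: polarization expresses $\frac{1}{m}\inprod{\Amap(X)}{\Amap(uv^\top)}$ through $\frac{1}{m}\twonorm{\Amap(X\pm uv^\top)}^2$, and $X\pm uv^\top$ can have rank $k+1$, so obtaining $\bigl|\inprod{(\Id-\frac{\Amap^*\Amap}{m})(X)}{uv^\top}\bigr|\le\delta\fronorm{X}$ with the clean constant $\delta$ requires $(k+1,\delta)$-RIP; one cannot in general pass from $(k,\delta)$-RIP to $(k+1,\delta')$-RIP without loss, so no trick closes the gap at level $k$. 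The source handles this exactly as you guessed it might: its theorems assume RIP of rank $2\trur+1$, one order above the matrices being measured (a convention visible in this paper's own appendix but dropped from the proposition's statement). Since $\delta$ here is only required to be below an unspecified small constant, the discrepancy is harmless downstream, but your proof of \eqref{eq: specFro} should be read as assuming $(k+1,\delta)$-RIP.
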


To see the usefulness of the above bound, letting
$\calD= \Id-\frac{\Amap^*\Amap}{m}$, we may decompose \eqref{eq: gd} as the following:
\begin{equation}\label{eq: gddecomposition}
U_{t+1} = U_t - \eta (\UUt \UUt^\top -\truX)\Ut +
\eta [\calD(\UUt \UUt^\top -\truX)]\UUt.
\end{equation}
Thus we may regard $[\calD(\UUt \UUt^\top -\truX)]\UUt$ as a perturbation term and focus on analyzing $U_{t+1} = U_t - \eta (\UUt \UUt^\top -\truX)\Ut$. To control the perturbation, we use Proposition \ref{prop: spectofronucbound}. Specifically, the first inequality, which we call spectral-to-Frobenius bound, is useful in ensuring some subspaces alignment and a certain signal term indeed increasing at a fast speed, and the second one, which we call spectral-to-nuclear bound, ensures that we can deal with matrices with rank beyond $2\trur$. See \Cref{sec: proofStrategy} for a more detailed explanation.

\subsection{Optimal statistical error guarantee} \label{sec: result}
We now give a rigorous statement showing that some gradient descent iterates $\UUt$ achieve optimal statistical error.

\begin{theorem}\label{thm: mainOptError}
We assume that $\mathcal{A}$ satisfies $(2\trur, \delta)$-RIP with $\delta \le c\kappa^{-2}\trur^{-1/2}$. Suppose the noise vector $e$ has independent mean-zero  SubG($\sigma^2$) entries and $m\gtrsim \kappa^2 n \frac{\sigma^2}{\sigma_{\trur}^2(\truX)}$.
Further suppose the stepsize $\eta\leq c\kappa^{-2}\specnorm{\truX}^{-1}$ and $r\geq \trur$.
If the scale of initialization satisfies
$$
\alpha \lesssim \min\left\{ \frac{1}{\kappa^4 n^4} \left(C\kappa n^2 \right)^{-6\kappa} \sqrt{\specnorm{\truX}},\; \kappa\sqrt{\frac{n\trur}{m\specnorm{\truX}}}\sigma\right\}.
$$
then after
$
\hat t \lesssim \frac{1}{\eta \sigma_{\min}(\truX)} \log\left( \frac{Cn \kappa \sqrt{\specnorm{\truX}}}{\alpha}\right)
$
iterations, we have
$$
\fnorm{U_{\hat t}U_{\hat t}^\top -\truX}^2\lesssim \sigma^2 \kappa^2  \frac{\trur n}{m}.
$$
with probability at least $1- \tilde C/n - \tilde C\exp(-\tilde c r)$, here $C,\tilde C, c,\tilde c>0$ are fixed numerical constants.
\end{theorem}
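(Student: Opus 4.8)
The plan is to extend the spectral-learning analysis of \cite{stoger2021small} to the noisy setting by tracking the gradient descent trajectory through two phases, carrying the effect of the noise matrix $E:=\tfrac1m\Amap^*(e)$ all the way into the final error bound. Restoring the dropped noise term in the decomposition \eqref{eq: gddecomposition} and isolating the linear part, one step of \eqref{eq: gd} reads
\begin{equation*}
\Utplus = (\mId+\mu\truX)\Ut - \mu\,(\Ut\Ut^\top)\Ut + \mu\,\calD(\Ut\Ut^\top-\truX)\Ut + \mu\,E\Ut .
\end{equation*}
The first term is a power-iteration engine: it amplifies the column space $V$ of $\truX$ along the $i$-th eigendirection at rate $1+\mu\sigma_i(\truX)$ but acts as the identity on $V^\perp$. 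The other three terms are perturbations. Because $\Ut\Ut^\top$ has rank up to $r\gg 2\trur$, the $\calD$-term must be controlled through the spectral-to-nuclear inequality \eqref{eq: specNuc}, $\specnorm{\calD(\Ut\Ut^\top)}\le\delta\fronorm{\Ut}^2$, together with \eqref{eq: specFro} applied to $\calD(\truX)$; the noise term is controlled on the high-probability event $\{\specnorm{E}\lesssim\sigma\sqrt{n/m}\}\cap\{\sup_{\rank(Z)\le2\trur,\,\fronorm{Z}\le1}\innerprod{E}{Z}\lesssim\sigma\sqrt{\trur n/m}\}$, which (for subgaussian $\Amap$ and $e$) holds up to the stated $1-\tilde C/n$ probability.

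In Phase~1 (the alignment/spectral phase), for $0\le t\le T_1$ with $T_1\asymp\frac{1}{\mu\sigma_{\trur}(\truX)}\log\!\big(\sqrt{\specnorm{\truX}}/\alpha\big)$, one shows by induction that (i) the signal grows geometrically, the $\trur$-th singular value of $\PV\Ut$ being at least $(1+c\mu\sigma_{\trur}(\truX))^t$ times its initial value, which the random initialization makes $\gtrsim\alpha/\sqrt n$; (ii) the orthogonal part $W_t:=\PVperp\Ut$ stays small, $\specnorm{W_t}\lesssim\alpha\,(C\kappa n^2)^{c\kappa}+(\text{noise floor})$, because $\mId+\mu\truX$ does not amplify it and the perturbations fed into it remain tiny while $\specnorm{\Ut\Ut^\top}\lesssim\alpha^2\,\mathrm{poly}(n,\kappa)$; and (iii) the Frobenius residual $\fronorm{W_t}\le\sqrt r\,\specnorm{W_t}$ stays below the threshold needed for Phase~2. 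By $t=T_1$, $U_{T_1}U_{T_1}^\top$ is within a constant factor of $\truX$ and its column space is nearly aligned with $V$.

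In Phase~2 (local refinement), the configuration reached at $T_1$ places the iterate in a basin where restricted strong convexity and smoothness hold on the near-$V$, rank-$\lesssim 2\trur$ part of the residual, while the excess-rank directions (of nuclear mass $\fronorm{W_t}^2$, kept small) and the noise are absorbed via \eqref{eq: specFro}--\eqref{eq: specNuc} and the event above. A one-step estimate then gives a contraction of the form
\begin{equation*}
\fronorm{\Utplus\Utplus^\top-\truX}^2 \le (1-c\mu\sigma_{\trur}(\truX))\,\fronorm{\Ut\Ut^\top-\truX}^2 + C\mu\,\sigma^2\kappa\,\tfrac{\trur n}{m}\cdot(\ldots),
\end{equation*}
valid as long as the iterate remains in the basin; iterating it for $O\!\big(\frac{1}{\mu\sigma_{\trur}(\truX)}\log(\cdot)\big)$ more steps --- still inside the stated budget $\hat t\lesssim\frac{1}{\mu\sigma_{\min}(\truX)}\log\!\big(Cn\kappa\sqrt{\specnorm{\truX}}/\alpha\big)$ --- drives the error to the fixed point $\lesssim\sigma^2\kappa^2\trur n/m$. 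It remains to check that the initialization's own contribution $\alpha^2\specnorm{\truX}\cdot\mathrm{poly}$ to the residual is itself below this floor --- which is exactly what the second constraint $\alpha\lesssim\kappa\sigma\sqrt{\trur n/(m\specnorm{\truX})}$ buys --- and that the iterate does not leave the basin (i.e.\ overfitting has not yet begun) within the window ending at $\hat t$.

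The main obstacle is Phase~1: simultaneously securing the geometric growth of the signal and keeping \emph{both} $\specnorm{W_t}$ and $\fronorm{W_t}\le\sqrt r\,\specnorm{W_t}$ (with $r$ possibly as large as $n$) below the Phase~2 threshold, all while the perturbations feed back into the dynamics over the $\Theta(\kappa\log(1/\alpha))$ iterations during which the $\sigma_1(\truX)$-eigendirections over-amplify the $\sigma_{\trur}(\truX)$-ones by a factor $(1/\alpha)^{\kappa-1}$; this compounding is the source of the $(C\kappa n^2)^{-6\kappa}$ factor in the admissible $\alpha$. The noise makes this strictly harder than \cite{stoger2021small}: $E\Ut$ injects into $W_t$ at every step, and its accumulated effect must be shown to \emph{match}, not exceed, the statistical floor $\sigma^2\kappa^2\trur n/m$ --- which is where the extra term $\kappa\sigma\sqrt{\trur n/(m\specnorm{\truX})}$ in the bound on $\alpha$ and the requirement $m\gtrsim\kappa^2 n\sigma^2/\sigma_{\trur}^2(\truX)$ originate. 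The residual $1-\tilde C\exp(-\tilde c r)$ failure probability comes solely from the random initialization, through the least singular value of the $\trur\times r$ Gaussian block $V^\top U$.
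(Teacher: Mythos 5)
Your proposal reconstructs the right high-level architecture (power-iteration signal growth, perturbation control via the spectral-to-Frobenius and spectral-to-nuclear bounds, a local contraction phase, and a final reduction in which $\|E\|\lesssim\sigma\sqrt{n/m}$ together with the constraint $\alpha\lesssim\kappa\sigma\sqrt{n\trur/(m\specnorm{\truX})}$ yields the floor $\sigma^2\kappa^2\trur n/m$ --- that last assembly step is exactly how the paper proves this theorem, as a two-line corollary of its deterministic Theorem~\ref{thm: deterministicGurantee}). But there is a genuine gap at the heart of your Phase~1/Phase~2 analysis: you define the error term as $W_t:=\PVperp U_t=V_{\truX^\perp}V_{\truX^\perp}^\top U_t$ and claim it stays at scale $\alpha\cdot\mathrm{poly}$ so that its nuclear mass $\fronorm{W_t}^2\le r\specnorm{W_t}^2$ can be absorbed. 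The paper explicitly identifies this column-space decomposition as the one that \emph{fails}: once the signal reaches scale $\sqrt{\specnorm{\truX}}$, the perturbations $\calD(\cdot)U_t$ and $EU_t$ inject mass of order $\mu\delta\specnorm{\truX}^{3/2}$ into $V_{\truX}^\perp$ at every step, so $V_{\truX^\perp}^\top U_t$ grows well beyond the initialization scale and is potentially full rank $r$, which breaks the nuclear-norm control required by \eqref{eq: specNuc} under only $2\trur$-RIP. The fix --- and the central idea of the Stöger--Soltanolkotabi framework the paper builds on --- is to instead write $U_t=U_tW_tW_t^\top+U_tW_{t,\perp}W_{t,\perp}^\top$ where $W_t$ comes from the SVD of $V_{\truX}^\top U_t$ in the $r$-dimensional \emph{row} space; it is $U_tW_{t,\perp}$, not $\PVperp U_t$, that provably remains at the initialization scale, and only then is $\nucnorm{U_tW_{t,\perp}W_{t,\perp}^\top U_t^\top}$ small enough for the $2\trur$-RIP argument to close.

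A second, related gap: your Phase~2 contraction is stated directly for $\fronorm{U_tU_t^\top-\truX}^2$. Establishing such a one-step inequality would require a bound of the form $\fronorm{(\Id-\calA^*\calA)(\truX-U_tU_t^\top)}\le c\kappa^{-2}\fronorm{\truX-U_tU_t^\top}$, which the paper notes appears impossible to obtain from $2\trur$-RIP alone (the residual is neither low rank nor does RIP give Frobenius-to-Frobenius control). The paper instead contracts the \emph{spectral} norm of the projected residual $\specnorm{V_{\truX}^\top(\truX-U_tU_t^\top)}$ (Lemma~\ref{lemma:localconvergence}), with the off-space and excess-rank parts handled separately, and converts to the Frobenius norm only once at the end at the cost of a $\sqrt{\trur}$ factor --- which is precisely where the $\trur$ in the final rate comes from. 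As written, your plan would stall at both of these points.
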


We note that unlike those bounds obtained in \cite{zhuo2021computational,zhang2021preconditioned}, our final error bound $\calO(\kappa^2 \sigma^2 \frac{n\trur}{m})$ has \emph{no extra} logarithmic dependence on the dimension $n$ or the over-specified rank $r$.
The initialization size $\alpha$ needs to be dimensionally small, which we think is mainly an artifact of the proof. 
In our experiments, we set $\alpha = 10^{-6}$. We explain the rationale behind this small random initialization in \Cref{sec: proofStrategy}. We note that the requirement that $\delta$ needs to be smaller than $\frac{1}{\sqrt{\trur}}$ might be relaxed further using the technique developed in \cite{stoger2024non} for some randomly generated $\Amap$. The condition $m\gtrsim \kappa^2 n \frac{\sigma^2}{\sigma_{\trur}^2(\truX)}$ ensures the noise matrix $\Amap^*(e)$ is small enough compared to the ground-truth matrix $\truX$.

To prove Theorem \ref{thm: mainOptError}, we utilize the following theorem, which only
requires a deterministic condition on the noise matrix $\Amap^*(e)$ and allows a larger range of the choice of $\alpha$. Its detailed proof can be found in \Cref{sec:proofmainresults} and follows a similar strategy as \cite{stoger2021small} with technical adjustments due to the noise matrix $\Amap^*(e)$.

\begin{theorem}\label{thm: deterministicGurantee}
Instate the same assumption in Theorem \ref{thm: mainOptError} for $\Amap$, the stepsize $\eta$, and the parametrized rank $r$. Let $E= \frac{1}{m}\mathcal{A}^*(e)$ and assume $\|E\| \le c\kappa^{-1}\sigma_{\min}(\truX).$
If the scale of initialization satisfies
$$
\alpha \lesssim  \frac{1}{\kappa^4 n^4} \left(C\kappa n^2 \right)^{-6\kappa} \sqrt{\specnorm{\truX}},
$$
then after
$
\hat t \lesssim \frac{1}{\eta \sigma_{\min}(\truX)} \log\left( \frac{Cn \kappa \sqrt{\specnorm{\truX}}}{\alpha}\right)
$
iterations, we have
$$
\frac{\fnorm{U_{\hat t}U_{\hat t}^\top -\truX}}{\specnorm{\truX}} \lesssim \frac{\alpha}{\sqrt{\specnorm{\truX}}} + \sqrt{\trur} \kappa \frac{\specnorm{E}}{\specnorm{\truX}}
$$
with probability at least $1- \tilde C/n - \tilde C\exp(-\tilde c r)$, here $C,\tilde C, c,\tilde c>0$ are fixed numerical constants.
\end{theorem}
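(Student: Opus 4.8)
The plan is to follow the two-phase template of \cite{stoger2021small}, threading the extra noise term $\eta E\UUt$ (with $E=\frac{1}{m}\Amap^*(e)$) through both phases. Write the eigendecomposition $\truX = \truU\,\truS\,\truU^\top$ with $\truU\in\real^{n\times\trur}$, set $V:=\Range(\truU)$, and, using $\calD=\Id-\frac{\Amap^*\Amap}{m}$, recast the update \eqref{eq: gd} as
\begin{equation*}
\UUtplus = (I+\eta\truX)\,\UUt-\eta\,\UUt\UUt^\top\UUt+\eta\,\calD(\UUt\UUt^\top-\truX)\,\UUt+\eta\,E\,\UUt,
\end{equation*}
so that the last three summands are perturbations of the linear ``power-iteration'' map $I+\eta\truX$. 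I would propagate by induction: (i) the growth of the aligned part, measured e.g.\ by $\sigma_{\trur}(\truU^\top\UUt)$; (ii) the size of the off-signal part $\specnorm{\PVperp\UUt}$; (iii) $\specnorm{\UUt}$ and $\nucnorm{\UUt\UUt^\top-\truX}$; and, once close, (iv) the distance $\fronorm{\UUt\UUt^\top-\truX}$. The only randomness is in the Gaussian direction $U$, which enters through lower bounds on $\sigma_{\trur}(\truU^\top U)$ and upper bounds on $\specnorm{U}$; these fail with probability at most $\tilde C/n+\tilde C\exp(-\tilde c r)$.

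\emph{Phase 1 (spectral alignment).} While $\specnorm{\UUt}^2$ stays below a small multiple of $\kappa^{-1}\sigma_{\trur}(\truX)$, the cubic term $\eta\UUt\UUt^\top\UUt$ is negligible and Proposition~\ref{prop: spectofronucbound} controls the other two perturbations: \eqref{eq: specFro} handles the rank-$\trur$ piece $\truX$, while \eqref{eq: specNuc} handles the possibly high-rank piece $\UUt\UUt^\top$ via a running bound on $\nucnorm{\UUt\UUt^\top-\truX}$, which is exactly what lets us get away with only $(2\trur,\delta)$-RIP even when $r=n$. Together with $\specnorm{E}\le c\kappa^{-1}\sigma_{\min}(\truX)$, all three perturbations are small relative to $\eta\sigma_{\trur}(\truX)$, so $\UUt$ tracks $\alpha(I+\eta\truX)^tU$; since $\truX$ vanishes on $V^\perp$, $\specnorm{\PVperp\UUt}$ is only mildly amplified and stays $\lesssim\alpha\cdot(\text{slow factor})^t$, whereas $\sigma_{\trur}(\truU^\top\UUt)$ grows like $\alpha(1+\eta\sigma_{\trur}(\truX))^t\,\sigma_{\trur}(\truU^\top U)$. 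Running this until the first time $t_1$ with $\sigma_{\trur}(\truU^\top U_{t_1})^2\asymp\sigma_{\trur}(\truX)$ gives $t_1\lesssim\frac{1}{\eta\sigma_{\min}(\truX)}\log\!\big(\sqrt{\specnorm{\truX}}/\alpha\big)$. The dimensional smallness of $\alpha$, including the $(C\kappa n^2)^{-6\kappa}$ factor, is consumed here: it forces $\specnorm{\PVperp U_{t_1}}/\sigma_{\trur}(\truU^\top U_{t_1})$ to still be a small absolute constant, so $U_{t_1}U_{t_1}^\top$ approximates $\truX$ on $V$ with small relative error and has a negligible $V^\perp$-tail, i.e.\ $\fronorm{U_{t_1}U_{t_1}^\top-\truX}\le\epsilon_0\specnorm{\truX}$. (The $\kappa$ in the exponent reflects that the slowest signal direction $\sigma_{\trur}$ needs about $\kappa$ times as many steps to saturate as the fastest one $\sigma_1$, during which the tail keeps growing.)

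\emph{Phase 2 (local refinement with a noise floor).} From $U_{t_1}$ inside the basin of attraction, $(2\trur,\delta)$-RIP supplies restricted strong convexity and smoothness of $f$ around the factorizations of $\truX$, giving a contraction of the form
\begin{equation*}
\fronorm{U_{t+1}U_{t+1}^\top-\truX}^2\le\big(1-c\,\eta\,\sigma_{\min}(\truX)\big)\fronorm{\UUt\UUt^\top-\truX}^2+C\,\eta\,\sigma_{\min}(\truX)\big(\sqrt{\trur}\,\kappa\,\specnorm{E}+\alpha\sqrt{\specnorm{\truX}}\big)^2,
\end{equation*}
where the $\sqrt{\trur}$ is the price of converting the operator-norm bound $\specnorm{E}$ into a Frobenius bound against the essentially rank-$O(\trur)$ error matrix, and the $\alpha\sqrt{\specnorm{\truX}}$ term carries forward the $V^\perp$-tail of $U_{t_1}$. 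Unrolling this geometric recursion for $\hat t-t_1\lesssim\frac{1}{\eta\sigma_{\min}(\truX)}\log(\cdot)$ steps drives the distance down to its fixed-point level $\lesssim\sqrt{\trur}\,\kappa\,\specnorm{E}+\alpha\sqrt{\specnorm{\truX}}$; dividing by $\specnorm{\truX}$ yields the stated bound, and $\hat t=t_1+(\hat t-t_1)\lesssim\frac{1}{\eta\,\sigma_{\min}(\truX)}\log\!\big(Cn\kappa\sqrt{\specnorm{\truX}}/\alpha\big)$.

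\emph{Main obstacle.} The delicate part is the Phase-1 induction: one must carry simultaneously, and through three coupled perturbations (the cubic term, the RIP error, and $E\UUt$), an upper bound on $\specnorm{\UUt}$ and on $\specnorm{\PVperp\UUt}$, a lower bound on $\sigma_{\trur}(\truU^\top\UUt)$, and a ``non-amplification'' statement for the $V^\perp$-component, all while keeping $\nucnorm{\UUt\UUt^\top-\truX}$ controlled so that \eqref{eq: specNuc} remains usable despite $r$ possibly being as large as $n$. A second subtlety is ensuring the error matrix stays close to rank $O(\trur)$ throughout Phase 2, which is what produces the clean $\sqrt{\trur}\,\kappa$ dependence on $\specnorm{E}$ and an error bound with no spurious $\log n$ or $\log r$ factors; and a third is the hand-off, i.e.\ verifying that the $U_{t_1}$ produced by the approximate power iteration genuinely satisfies the entry conditions of the local analysis, which is precisely where the polynomial/$\kappa$-exponential smallness of $\alpha$ is spent.
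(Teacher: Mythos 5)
Your high-level two-phase template is the right shape, but two of its load-bearing steps are exactly the ones the paper identifies as unworkable, so as written the argument has genuine gaps rather than just missing details.

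First, your Phase-1 induction is built on the column-space decomposition $\UUt = V_{\truX}V_{\truX}^\top\UUt + V_{\truX^\perp}V_{\truX^\perp}^\top\UUt$ and the hypothesis that $\specnorm{\PVperp\UUt}$ stays at initialization scale (``only mildly amplified''). The paper explicitly rejects this decomposition: with small random initialization the component $V_{\truX^\perp}^\top\UUt$ does \emph{not} stay small --- it grows along with the signal, because the perturbations $\calD(\UUt\UUt^\top-\truX)\UUt$ and $E\UUt$ inject mass into $V^\perp$ at a rate proportional to the (growing) signal. Consequently your running control of $\nucnorm{\UUt\UUt^\top-\truX}$, which is what you need to invoke \eqref{eq: specNuc} with only $(2\trur,\delta)$-RIP, cannot be closed: the $V^\perp$-block is potentially rank $\min\{r,n\}-\trur$ with entries that are not at scale $\alpha$. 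The proof instead uses the decomposition \eqref{eq: decomposition} through the right singular vectors $W_t$ of $V_{\truX}^\top\UUt$: the ``error term'' $\UUt W_{t,\perp}$ (a different object from $\PVperp\UUt$) is the one that provably stays near initialization scale, and it is $\nucnorm{\UUt W_{t,\perp}W_{t,\perp}^\top\UUt^\top}\le(\min\{r,n\}-\trur)\specnorm{\UUt W_{t,\perp}}^2$ that feeds the spectral-to-nuclear bound. A separate aligned quantity $\specnorm{V_{\truX^\perp}^\top V_{\UUt W_t}}$ tracks how much of the \emph{signal} columns leaks into $V^\perp$; that leakage is bounded by a small constant times $\kappa^{-2}$, not by $\alpha$.

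Second, your Phase-2 contraction in $\fronorm{U_{t+1}U_{t+1}^\top-\truX}^2$ via ``restricted strong convexity and smoothness from $(2\trur,\delta)$-RIP'' does not go through: the iterates $\UUt\UUt^\top$ can be full rank (indeed $r=n$ is allowed), so a Frobenius-norm contraction would require controlling $\fronorm{(\Id-\calA^*\calA)(\truX-\UUt\UUt^\top)}$ by $\fronorm{\truX-\UUt\UUt^\top}$, which the paper's footnote notes appears impossible from $2\trur$-RIP alone --- this is precisely the distinction from the prior work that assumes $2r$-RIP. The paper's Lemma \ref{lemma:localconvergence} instead contracts the \emph{spectral} norm of the projected residual $\specnorm{V_{\truX}^\top(\truX-\UUt\UUt^\top)}$, with an additive forcing term $\nucnorm{\UUt W_{t,\perp}W_{t,\perp}^\top\UUt^\top}+\specnorm{E}$ controlled by the Phase-1/2 invariants, and only converts to the Frobenius norm once, at the stopping time, via Lemma \ref{lemma:technicallemma8}; that single conversion is where the $\sqrt{\trur}$ in the final bound comes from. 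To repair your proposal you would need to replace both the $\PVperp$-based induction and the Frobenius-norm contraction with their $W_t$-based and spectral-norm counterparts.
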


Let us now prove Theorem \ref{thm: mainOptError}.

\begin{proof}[Proof of Theorem \ref{thm: mainOptError}]
Consider the following bound on $\specnorm{\calA^*(e)}$,
\begin{equation}\label{eq: noiseMatrix}
  \frac{1}{m}  \specnorm{\calA^*(e)}\lesssim \sqrt{\frac{n}{m}}\sigma \leq c\kappa^{-1} \sigma_{\trur}(\truX).
\end{equation}
 The first inequality comes from standard random matrix theory \cite[Lemma 1.1]{candes2011tight} and the second is due to our assumption on $m$. Now Theorem \ref{thm: mainOptError} simply follows from Theorem \ref{thm: deterministicGurantee} by plugging in the above bound and the choice of $\alpha$.
\end{proof}

In Theorem \ref{thm: mainOptError}, we only know that some iterate $U_{\hat{t}}$ will achieve the optimal statistical error, and the question of how to pick such iterate remains. We detail the procedure in Section \ref{sec: EarlyStoping}.

\section{Stopping via the validation approach}
\label{sec: EarlyStoping}

In this section, we propose an efficient method for stopping, i.e., finding the iterate $X_t =U_tU_t^\top$ that has nearly the smallest recovery error $\norm{X_t - \truX}{F}$.

\subsection{The validation approach}
We exploit the {\it validation} approach, a common technique used in machine learning. 
Specifically, the data $(y,\Amap)$ can be explicitly expressed as $\{(y_i,A_i)\}_{i=1}^{m}$ where each data sample $(y_i,A_i)\in \real^{1+n\times n}$ and $y_i = [\Amap(\truX)]_i= \text{trace}({A_i}^\top {\truX})$.
We randomly split the set $\{1,\dots,m\}$ to get a partition,  $\calI_{\text{train}}$ and $\calI_{\text{val}}$. We then
split the data into $m_{\val}$ validation samples with index set $\calI_{\val}$, $\{(y_i,A_i)\}_{i\in \calI_{\val}}$, and
$m_{\text{train}}$ training samples with index set $\calI_{\text{train}}$, $\{(y_i,A_i)\}_{i\in \calI_{\text{train}}}$. We denote the measurements in the validation samples as $y_\val \in \mathbb{R}^{m}$ where  $[y_\val]_i = 0$ if $i\not\in \calI_{\val}$ and $y_i$ if $i\in \cal{\val}$. In another word, we replace the entries in $y$ whose index is not in the validation set with $0$. We define $e_{\val}$ in a similar way. The linear operator for the validation set $\calA_\val:\mathbb{R}^{n\times n} \rightarrow \mathbb{R}^{m}$ is $[\calA_\val (X)]_i = 0$ if $i \not\in \calI_{\val}$ and $\text{trace}(A_iX)$ for $i \in \calI_{\val}$, for any $X\in \mathbb{R}^{n\times n}$. The vectors  $y_{\text{train}}$, $e_{\text{train}}$, and the map $\calA_{\text{train}}$ are defined similarly. The training loss $f_{\text{train}}$ is $f_{\text{train}} : U \mapsto \frac{1}{2m_{\text{train}}} \twonorm{\calA_{\text{train}}(UU^\top)-y_{\text{train}}}^2$.

A formal algorithmic description of the \texttt{GD} with \texttt{SRI} combined with the validation approach is in \Cref{alg: main}.

\floatname{algorithm}{Algorithm}
\renewcommand{\algorithmicrequire}{\textbf{Input:}}
\renewcommand{\algorithmicensure}{\textbf{Output:}}
\begin{algorithm}
\begin{algorithmic}
\caption{\texttt{GD} with \texttt{SRI} combined with the validation approach}
\label{alg: main}
\Require data $\{(y_i,A_i)\}_{i=1}^m$, parametrized rank $r$, initial scale $\alpha$, stepsize $\eta$, and  iteration number $T$\\
\textbf{Step 1:} Split the data randomly into a training set $\{(y_i,A_i)\}_{i \in \mathcal{I}_{\text{train}}}$ of size $m_{\text{train}}$ and a validation set $\{(y_i,A_i)\}_{i \in \mathcal{I}_{\text{val}}}$ of size $m_{\val}$\\
\textbf{Step 2:} Initialize $U_0 = \alpha U$ where $U \in \mathbb{R}^{n\times r}$ has iid standard Gaussian entries\\
\textbf{Step 3:}
for $t=1,2,\dots, T$ \\
\hspace{1.2cm} Compute $U_{t} =  U_{t-1} - \eta \nabla f_{\text{train}}(U_{t-1})$ and  the\\
\hspace{1.2cm} validation error $v_t = \frac{1}{m_{\val}}\twonorm{\calA_{\val}(U_tU_t^\top)-y_{\val}}^2$.\\
\hspace{1.1 cm} end for \\
\vspace{-0.4cm}
\Ensure $U_{\hat{t}}$ where $\hat{t} = \arg\min_{1\leq t\leq T} v_t$.
\end{algorithmic}
\end{algorithm}

\subsection{Guarantee of the validation approach for \texttt{GD} with \texttt{SRI}}
To get a guarantee of the validation approach, we start with a lemma showing that the validation error of an arbitrary sequence $D_1,\dots, D_T$ is close to its expectation if the entries of $A_i$ are iid drawn from a subgaussian distribution. The proof can be found in \Cref{sec: validation_proof}.

\begin{lemma} \label{lem: gauss_val_close}
Suppose each entry in $A_i$, $i\in \calI_{\val}$ is i.i.d. $\subG(c_1)$ with mean zero and variance $1$ and each $e_i$ is also a zero-mean sub-Gaussian distribution $\subG(c_2\sigma^2)$ with variance $\sigma^2$, where $c_1,c_2\ge 1$ are absolute constants. Let $T>0$. Assume matrices $D_1,\ldots,D_T \in \mathbb{R}^{d\times d}$  are independent of $\calA_\val$ and $e_\val$. For any $\delta_\val>0$, if $m_\val \ge C_1\frac{\log T}{\delta_\val^2}$, then with probability at least $1 - 2T \exp\parans{-C_2 m_\val \delta_\val^2}$,
\begin{equation}
    \abs{  \twonorm{\calA_\val(D_t) + e}^2 - m_\val(\norm{D_t}{F}^2 + \sigma^2)} \le \delta_\val m_\val(\norm{D_t}{F}^2 +\sigma^2), \forall \ t = 1,\ldots,T.
\end{equation}
Here $C_1,C_2 \ge 0$ are constants that only depend on $c_1$ and $c_2$.
\label{lem:valid-RIP}\end{lemma}

Next, we show the error identified by the validation is close to the optimal if each validation error is close to its expectation. Its proof is in \Cref{sec: validation_proof}.

\begin{lemma}\label{lemma: validation1}
 Given any $T>0$ and matrices $D_1,\ldots,D_T \in \mathbb{R}^{d\times d}$, define $\wh t = \argmin_{1\le t \le T} \twonorm{\Amap_{\val}(D_t) + e}$ and $\wt t = \argmin_{1\le t \le T} \fnorm{D_t}$. If
\begin{equation}
(1 - \delta_\val) (\fnorm{D_t}^2 +\sigma^2) \le \frac{1}{m_\val} \twonorm{\calA_\val(D_t) + e}^2
(1+\delta_\val) (\norm{D_t}{F}^2 + \sigma^2), \ \forall t = 1,\ldots, T,
\end{equation}
then,
\[
\fnorm{D_{\wh t}}^2 \le \frac{1+\delta_\val}{1-\delta_\val} \fnorm{D_{\wt t} }^2 + \frac{2\delta_\val}{1-\delta_\val} \sigma^2.
\]
\end{lemma}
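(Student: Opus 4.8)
\textbf{Proof proposal for Lemma \ref{lemma: validation1}.}

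The plan is to treat the displayed two-sided inequality as a genuine hypothesis (supplied with high probability by Lemma \ref{lem:valid-RIP} with the choice $D_t = X_t - \truX$, modulo the $e$ versus $e_\val$ bookkeeping), and then argue purely deterministically. The key observation is that $\wh t$ minimizes the validation residual $\frac1{m_\val}\twonorm{\Amap_\val(D_t) + e}^2$ over $t$, while $\wt t$ minimizes $\fnorm{D_t}^2$; so the optimality of $\wh t$ gives
\[
\frac1{m_\val}\twonorm{\Amap_\val(D_{\wh t}) + e}^2 \le \frac1{m_\val}\twonorm{\Amap_\val(D_{\wt t}) + e}^2.
\]
The whole proof is then a sandwich argument.

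First I would apply the lower half of the hypothesis at index $\wh t$ to bound the left-hand side from below:
\[
(1-\delta_\val)\bigl(\fnorm{D_{\wh t}}^2 + \sigma^2\bigr) \le \frac1{m_\val}\twonorm{\Amap_\val(D_{\wh t}) + e}^2.
\]
Next I would apply the upper half of the hypothesis at index $\wt t$ to bound the right-hand side from above:
\[
\frac1{m_\val}\twonorm{\Amap_\val(D_{\wt t}) + e}^2 \le (1+\delta_\val)\bigl(\fnorm{D_{\wt t}}^2 + \sigma^2\bigr).
\]
Chaining these three inequalities through the optimality of $\wh t$ yields
\[
(1-\delta_\val)\bigl(\fnorm{D_{\wh t}}^2 + \sigma^2\bigr) \le (1+\delta_\val)\bigl(\fnorm{D_{\wt t}}^2 + \sigma^2\bigr).
\]
Dividing by $1-\delta_\val$ (which is positive since $\delta_\val < 1$), subtracting $\sigma^2$ from both sides, and using $\frac{1+\delta_\val}{1-\delta_\val} - 1 = \frac{2\delta_\val}{1-\delta_\val}$ to collect the $\sigma^2$ terms gives exactly
\[
\fnorm{D_{\wh t}}^2 \le \frac{1+\delta_\val}{1-\delta_\val}\fnorm{D_{\wt t}}^2 + \frac{2\delta_\val}{1-\delta_\val}\sigma^2,
\]
which is the claim.

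There is essentially no obstacle in the lemma as stated — it is a short deterministic squeeze. The only point requiring a little care is a modeling one that lives outside this particular statement: making sure the hypothesis display is legitimately available, i.e., that the $D_t$ (which in the application are $X_t - \truX$ built from the training iterates) are independent of $\Amap_\val$ and $e_\val$, so that Lemma \ref{lem:valid-RIP} applies with $T$ the number of iterates tracked, and also reconciling the noise term $e$ appearing here with the validation noise $e_\val$ used in Lemma \ref{lem:valid-RIP}. Within the proof of Lemma \ref{lemma: validation1} itself, one just needs to note $1 \pm \delta_\val > 0$ to preserve inequality directions when multiplying/dividing; everything else is arithmetic.
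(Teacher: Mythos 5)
Your proof is correct and is essentially identical to the paper's: both use the optimality of $\wh t$ on the validation residual, sandwich it between the lower bound of the hypothesis at $\wh t$ and the upper bound at $\wt t$, and then rearrange using $\frac{1+\delta_\val}{1-\delta_\val}-1=\frac{2\delta_\val}{1-\delta_\val}$. No differences worth noting.
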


We are now ready to prove the guarantee of the validation approach.
\begin{theorem}
Fix $\epsilon\in (0,1)$.
Under the same assumption as Theorem \ref{thm: mainOptError} with $(y,\Amap)$ replaced by $(y_{\text{train}},\Amap_{\text{train}})$, and the same assumption of Lemma \ref{lem: gauss_val_close}, further suppose \[
\frac{(n\trur)^2 \kappa^4 m_{\val}}{m^2_{\text{train}}}\geq C{\log (T/\epsilon)} 
\quad \text{and}\quad  
T>C \kappa^{4}\log (\kappa\sqrt{mn} \max\{1,\frac{\opnorm{\truX}}{\sigma}\})
\]
for some universal $C>0$. Also let $\wh t = \argmin_{1\le t \le T} \twonorm{\Amap_{\val}(D_t) + e}$. Then, with probability at least $1-\epsilon$, we have
\begin{equation}
    \fronorm{U_{\wh t} U_{\wh t}^\top -\truX}^2\leq C \frac{\kappa ^2 \sigma^2 \trur n}{m_{\text{train}}}.
\end{equation}
\label{thm:validation}\end{theorem}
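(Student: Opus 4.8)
The statement follows by composing three results already established above, so there is no new inequality to prove: Theorem~\ref{thm: mainOptError} supplies a near-optimal iterate somewhere on the training trajectory, Lemma~\ref{lem:valid-RIP} turns the held-out measurements into a restricted-isometry statement along that trajectory, and Lemma~\ref{lemma: validation1} converts the latter into an oracle inequality for the validation-selected iterate $\wh t$. Throughout, write $D_t=\UUt\UUt^\top-\truX$ for the iterates of \eqref{eq: gd} fed the training data $(y_{\text{train}},\Amap_{\text{train}})$, and set $\wt t=\argmin_{1\le t\le T}\fnorm{D_t}$.

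\textbf{Step 1: a near-optimal iterate within the first $T$ steps.} Apply Theorem~\ref{thm: mainOptError} with $\Amap$ replaced by $\Amap_{\text{train}}$: on an event $\calE_1$ of probability at least $1-\tilde C/n-\tilde C e^{-\tilde c r}$ there is an index $\hat t\lesssim \tfrac{1}{\mu\sigma_{\min}(\truX)}\log\!\big(Cn\kappa\sqrt{\specnorm{\truX}}/\alpha\big)$ with $\fnorm{D_{\hat t}}^2\lesssim \kappa^2\sigma^2\trur n/m_{\text{train}}$. Taking $\mu$ and $\alpha$ of the order of the largest values their hypotheses permit, unwinding this count gives $\hat t\lesssim \kappa^3\log(1/\alpha)\lesssim \kappa^4\log(n+r)$, so the assumption $T>C\kappa^4\log(n+r)$ forces $\hat t\in\{1,\dots,T\}$, and hence
\[
\fnorm{D_{\wt t}}^2=\min_{1\le t\le T}\fnorm{D_t}^2\le \fnorm{D_{\hat t}}^2\lesssim \frac{\kappa^2\sigma^2\trur n}{m_{\text{train}}}.
\]

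\textbf{Step 2: validation RIP along the trajectory, then the oracle inequality.} Since $D_1,\dots,D_T$ are deterministic functions of $(y_{\text{train}},\Amap_{\text{train}})$, they are independent of $(\Amap_\val,e_\val)$, so Lemma~\ref{lem:valid-RIP} applies with $\delta_\val=\kappa^2 n\trur/m_{\text{train}}$; here $\delta_\val\le\tfrac12$, since the sample count needed for $(2\trur,\delta)$-RIP with $\delta\le c\kappa^{-2}\trur^{-1/2}$ forces $m_{\text{train}}\gtrsim\kappa^4 n\trur^2\log(1/\delta)\gg\kappa^2 n\trur$. The quantitative assumptions on $m_\val$ and $T$ guarantee both $m_\val\ge C_1\log T/\delta_\val^2$ (so Lemma~\ref{lem:valid-RIP} is in force) and that the failure probability $2Te^{-C_2 m_\val\delta_\val^2}$ is at most $\epsilon/2$; together with $\tilde C/n+\tilde C e^{-\tilde c r}\le\epsilon/2$ (valid once $n,r$ are large enough relative to $1/\epsilon$), the good event $\calE_1\cap\calE_2$ has probability at least $1-\epsilon$. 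On $\calE_2$ the sandwich hypothesis of Lemma~\ref{lemma: validation1} holds for all $t\le T$, so with $\wh t=\argmin_{1\le t\le T}\twonorm{\Amap_\val(D_t)+e}$,
\[
\fronorm{D_{\wh t}}^2\le \tfrac{1+\delta_\val}{1-\delta_\val}\fnorm{D_{\wt t}}^2+\tfrac{2\delta_\val}{1-\delta_\val}\sigma^2\le 3\fnorm{D_{\wt t}}^2+4\delta_\val\sigma^2 .
\]
Substituting the Step~1 bound and $4\delta_\val\sigma^2=4\kappa^2 n\trur\sigma^2/m_{\text{train}}$, both terms are $O(\kappa^2\sigma^2\trur n/m_{\text{train}})$, which is the claimed bound.

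\textbf{Main obstacle.} There is no hard estimate here; the proof is a careful composition. The one point that genuinely needs care is the independence invoked in Step~2: Lemma~\ref{lem:valid-RIP} is applied to the \emph{data-dependent} matrices $D_t$, and this is legitimate precisely because the hold-out split makes each $D_t$ measurable with respect to the training randomness alone — had the validation set leaked into the trajectory, the uniform-over-$t$ concentration would fail. The remainder is bookkeeping: verifying that $T>C\kappa^4\log(n+r)$ really dominates the iteration count $\hat t$ of Theorem~\ref{thm: mainOptError} (which requires unwinding the dependence of $\hat t$ on $\mu$, $\alpha$, and $\kappa$), checking $\delta_\val<1$, and assembling the union bound over $\calE_1$ and $\calE_2$ so the total failure probability is at most $\epsilon$.
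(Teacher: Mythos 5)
Your proposal is correct and follows exactly the route the paper intends: the paper gives no written proof of Theorem~\ref{thm:validation} beyond the remark that it follows by ``combining the above lemmas'' with $\delta_\val=\kappa^2 n\trur/m_{\mathrm{train}}$ and $D_t=U_tU_t^\top-\truX$, and your composition of Theorem~\ref{thm: mainOptError} (near-optimal iterate within $T$ steps), Lemma~\ref{lem:valid-RIP} (validation concentration along the training-measurable trajectory), and Lemma~\ref{lemma: validation1} (oracle inequality for $\wh t$) is precisely that argument, with the bookkeeping on $\hat t\le T$, $\delta_\val<1$, and the union bound filled in.
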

\begin{proof}
Let $\delta_{\val}= \frac{\kappa^2 n\trur}{m_{\text{train}}}$ and $D_t = \UUt \UUt^\top -\truX$  where $\UUt, t=1,\dots,T$ are iterates from \eqref{eq: gd} with $(y,\Amap)$ replaced by $(y_{\text{ train}},\Amap_{\text{train}})$. With this choice of $\delta_{\text{val}}$ and $D_t$ and the above lemmas, the theorem is immediate.
\end{proof}

The extra conditions $\frac{(n\trur)^2 \kappa^4 m_{\val}}{m^2_{\text{train}}}\geq C{\log (T/\epsilon)}$ and 
$T>C \kappa^{4}\log (\kappa\sqrt{mn} \max\{1,\frac{\opnorm{\truX}}{\sigma}\})$
are satisfied in many scenarios. For example, if $m_{\val}$ and $m_{\text{train}}$ are on the order of magnitude as $n\trur$, $T/\epsilon =\mathcal{O}(n^2)$ with $\sigma= \frac{\fronorm{\truX}}{n}$ and $\kappa = \mathcal{O}(1)$. More generally, the conditions hold if (a) the two quantities $m_{\val}$ and $m_{\text{train}}$ are of the same order of magnitude and on the order of $o(n^2)$, and (b) the ratiol $\opnorm{\truX}/\sigma$ and the quantity $T/\epsilon$ are polynomial but not exponential in $\kappa$ and $n$, e.g., $T= n$, $\epsilon=1/n$, and $\sigma = \frac{\opnorm{\truX}}{n^2}$, and (c) the condition number $\kappa$ is constant or logrithmic in $n$.

The condition of \Cref{thm:validation} is satisfied if (1) each of the sensing matrix $A_i$ contains iid subgaussian entries (to use Lemma \ref{lem: gauss_val_close}), (2) the sample size $m \gtrsim \max\{n\trur^2\kappa ^4,\kappa^2 n\frac{\sigma^2}{\sigma_{\trur}^2(\truX)}\}$ \cite[Theorem 2.3]{CandesTight11} (to ensure RIP and the noise condition $\specnorm{E}\leq c\kappa^{-1}\sigma_{\min}(\truX)$), and (3) the configuration of $m_{\val}$, $m_{\text{train}}$, $T/\epsilon$, $\sigma$, and $\kappa$, obeys the one we discussed above. The condition (1) and (2) are rather standard in the literature for matrix sensing \cite{chen2015fast,chi2019nonconvex,tu2016low} in terms of the dependence on $n$ and the last should hold for many scenarios in practice.

\section{Numerics}
\label{sec:exp}

In this section, we conduct a set of experiments on matrix sensing and matrix completion to demonstrate the performance of gradient descent with validation approach for over-parameterized matrix recovery. We also conduct experiments on image recovery with deep image prior to show potential applications of the validation approach for other related unsupervised learning problems.

\subsection{Matrix sensing} In these experiments, we generate a rank-$\trur$ matrix $\truX\in\R^{n\times n}$ by setting $\truX = U_{\natural} U_{\natural}^\top$ with each entries of $U_{\natural}\in\R^{n\times \trur}$ being normally distributed random variables, and then normalize $\truX$ such that $\fnorm{\truX} = 1$. We then obtain $m$ measurements $y_i = \langle A_i, \truX\rangle + e_i$ for $i = 1,\ldots m$, where entries of $A_i$ are i.i.d. generated from standard normal distribution, and $e_i$ is a Gaussian random variable with zero mean and variance $\sigma^2$. We set $n = 50, m = 1000$, and vary the rank $\trur$ from 1 to 20 and the noise variance $\sigma^2$ from 0.1 to 1. We then split the $m$ measurements into $m_\train = 900$ training data and $m_\val = 100$ validation data to get the training dataset $(y_{\text{train}},\Amap_{\text{train}})$ and validation dataset $(y_\val,\Amap_\val)$, respectively. To recover $\truX$, we use a square factor $U\in\R^{n\times n}$ and then solve  the over-parameterized model \eqref{eq:sensing fact} on the training data $(y_{\text{train}},\Amap_{\text{train}})$ using gradient descent with step size $\eta = 0.5$,  $T = 500$ iterations, and initialization $U_0$ of Gaussian distributed random variables with zero mean and variance $\alpha^2$; we set $\alpha = 10^{-6}$. Within all the generated iterates $U_t$, we select
\e\begin{split}
\wt t  = \argmin_{1 \le t\le T}\fnorm{U_t U_t^\top - \truX}, \quad \wh t & = \argmin_{1 \le t\le T} \left\|\calA_\val\left(U_t U_t^\top\right) - y_\val\right\|_2. \end{split}\label{eq:best-t-validation-t}\ee
That is, $U_{\wt t}$ is the best iterate to represent the ground-truth matrix $\truX$, while $U_{\wh t}$ is the one selected based on the validation loss. 

For each pair of $\trur$ and $\sigma^2$, we conduct 10 Monte Carlo trails and compute the average of the recovered errors for $\fnorm{U_{\wt t} U_{\wt t}^\top - \truX}^2$ and $\fnorm{U_{\wh t} U_{\wh t}^\top - \truX}^2$. The results are shown in \Cref{fig:sensing} (a) and (b). In both plots, we observe that the recovery error is proportional to $\trur$ and $\sigma^2$, in consistent with \Cref{thm: mainOptError} and \Cref{thm:validation}. Moreover, comparing \Cref{fig:sensing} (a) and (b), we observe similar recovery error for both $U_{\wt t}$ and $U_{\wh t}$, demonstrating the performance of the validation approach.

\begin{figure}[t]
\centering
\begin{subfigure}{3.3cm}
\includegraphics[width=3.7cm,keepaspectratio]{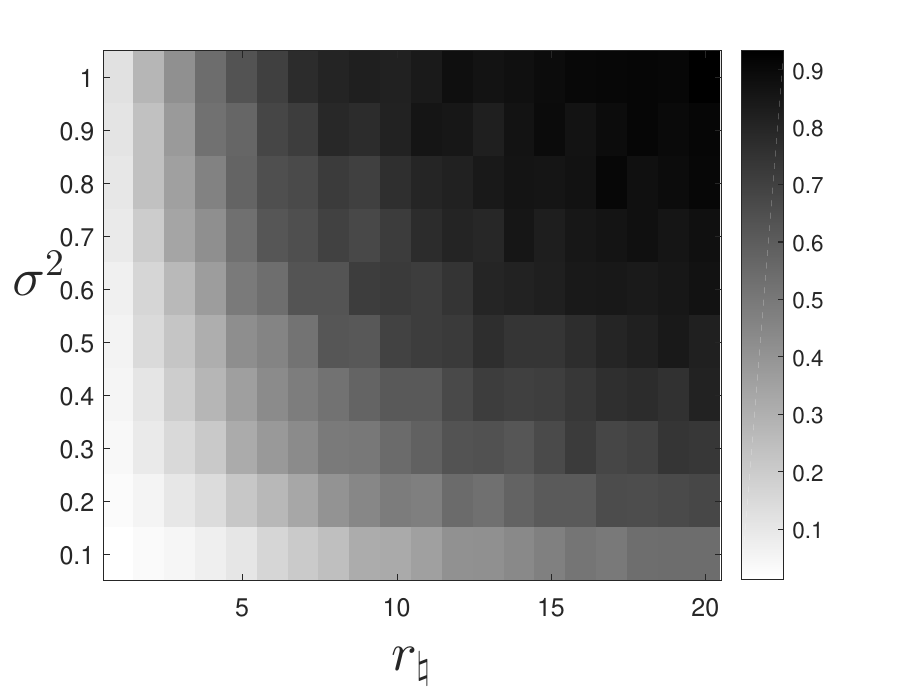}
\caption{$\fnorm{U_{\wt t} U_{\wt t}^\top - \truX}^2$}
\label{Figure1}
\end{subfigure}
\begin{subfigure}{3.3cm}
\includegraphics[width=3.7cm,keepaspectratio]{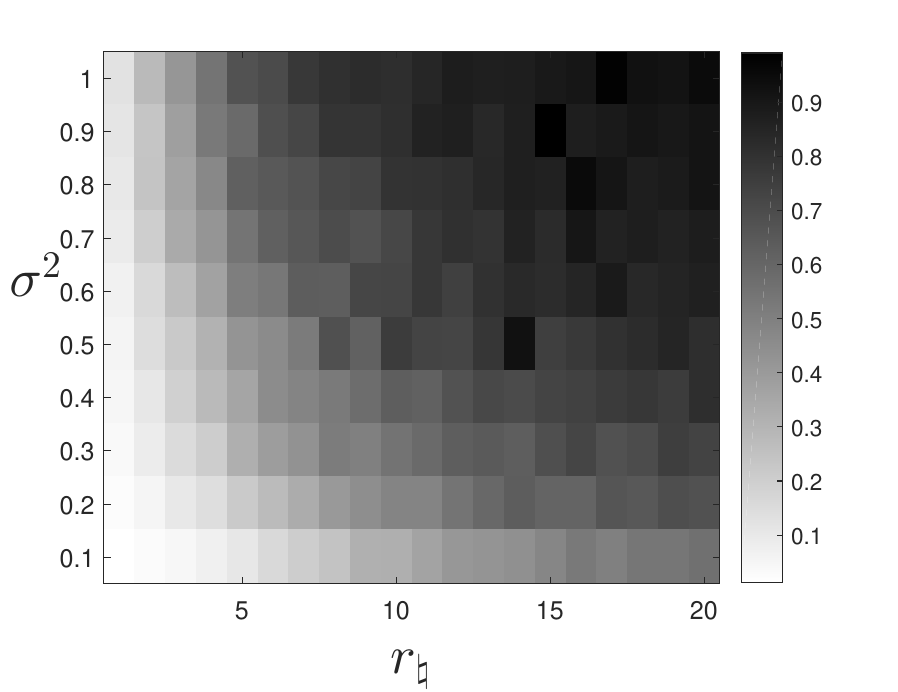}
\caption{$\fnorm{U_{\wh t} U_{\wh t}^\top - \truX}^2$}
\label{Figure2}
\end{subfigure}
\begin{subfigure}{3.3cm}
\includegraphics[width=3.7cm,keepaspectratio]{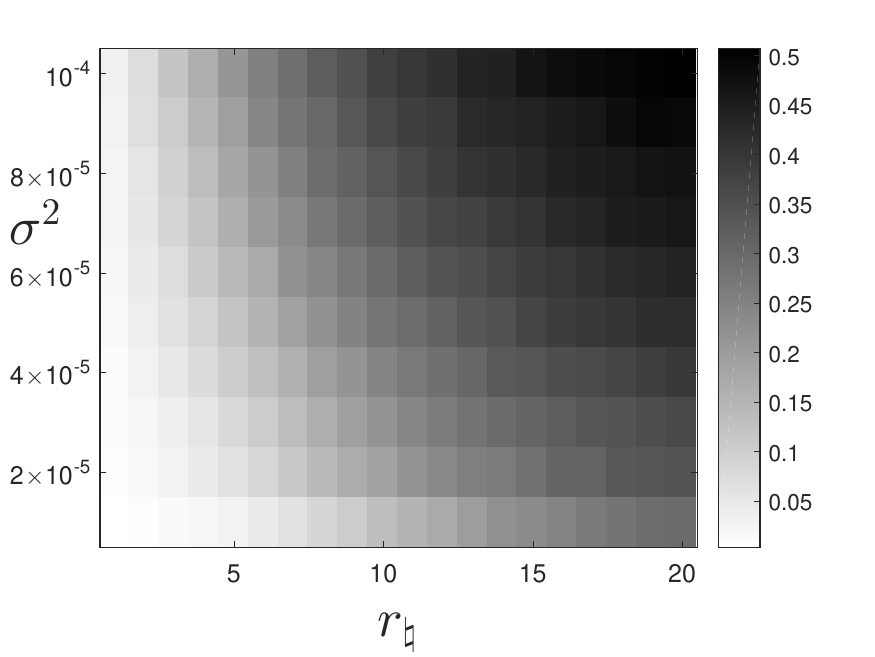}
\caption{$\fnorm{U_{\wt t} U_{\wt t}^\top - \truX}^2$}
\label{Figure3}
\end{subfigure}
\begin{subfigure}{3.3cm}
\includegraphics[width=3.7cm,keepaspectratio]{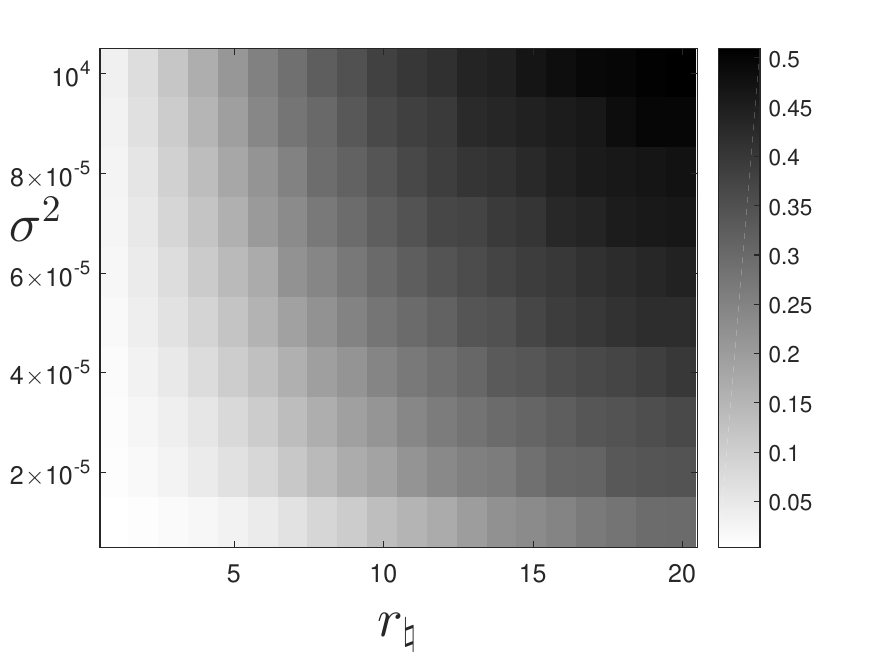}
\caption{$\fnorm{U_{\wh t} U_{\wh t}^\top - \truX}^2$}
\label{Figure4}
\end{subfigure}
\caption{Recover error (whiter indicates better recovery) of gradient descent for solving over-parameterized noisy $(a,b)$ matrix sensing problem measured by $(a)$ the best iterate $U_{\wt t}$ and $(b)$ the iterate $U_{\wh t}$ selected via validation approach, and $(c,d)$ matrix completion measured by $(c)$ $U_{\wt t}$ and $(d)$ $U_{\wh t}$. $\wt t$ and $\wh t$ are formally defined in \eqref{eq:best-t-validation-t}.}
\label{fig:sensing}\end{figure}


\subsection{Matrix completion} 
In the second set of experiments, we test the performance on matrix completion where we want to recover a low-rank matrix  from incomplete measurements. Similar to the setup for matrix sensing, we generate a rank-$\trur$ matrix $\truX$ and then randomly obtain $m$ entries with additive Gaussian noise of zero mean and variance $\sigma^2$. In this case, each selected entry can be viewed as obtained by a sensing matrix that contains zero entries except for the location of the selected entry being one. Thus, we can apply the same gradient descent to solve the over-parameterized problem to recovery the ground-truth matrix $\truX$. In these experiments, we set $n = 50, m = 1000, \eta = 
0.5, T = 500, \alpha = 10^{-3}$, and vary the rank $\trur$ from 1 to 20 and the noise variance $\sigma^2$ from $10^{-5}$ to $10^{-4}$. We display the recovered error in \Cref{fig:sensing} (c) and (d). Similar to the results in matrix sensing, we also observe from \Cref{fig:sensing} (c) and (d) that gradient descent with validation approach produces a good solution for solving over-parameterized noisy matrix completion problem.

\subsection{Deep image prior}\label{subsec:dip}
\begin{figure}[t]
    \centering
    \subfloat[Noisy image ]{\includegraphics[width=0.23\textwidth]{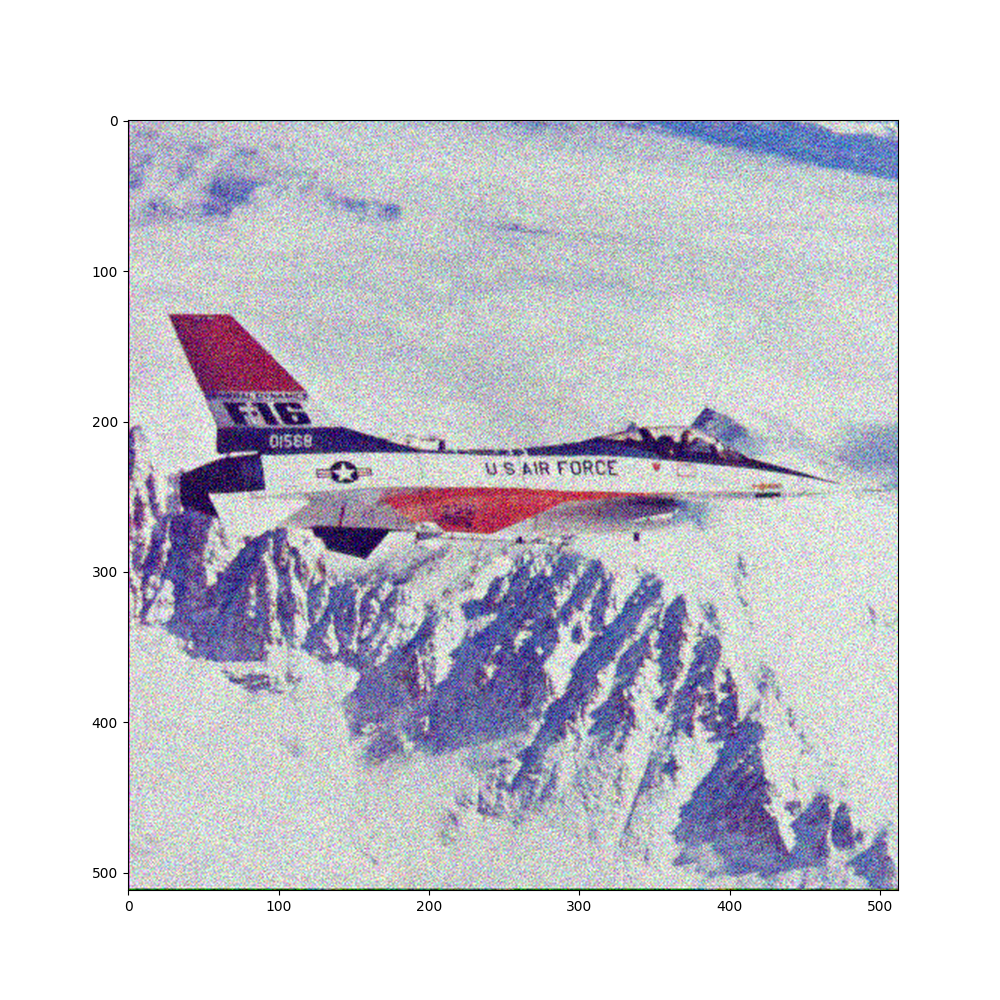}} \
    \subfloat[Best Val Loss (30.2) ]{\includegraphics[width=0.23\textwidth]{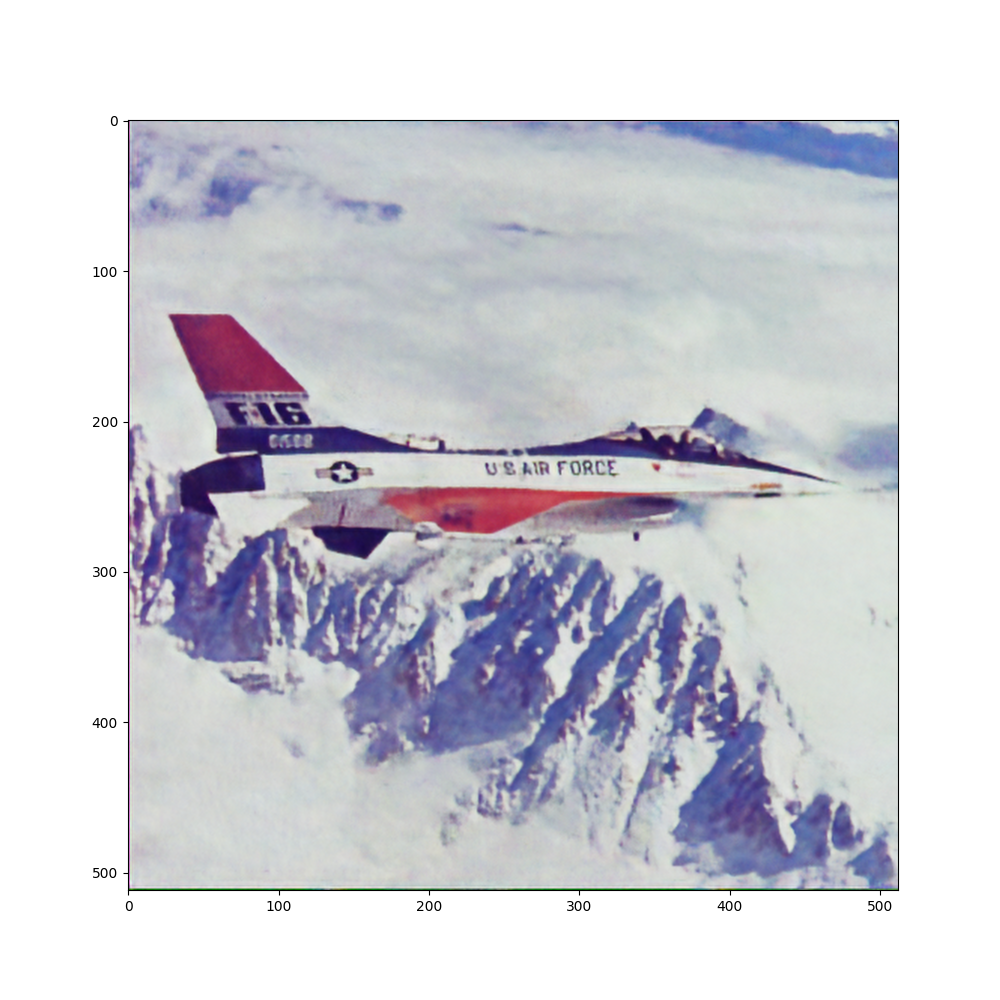}} \
    \subfloat[Best PSNR (30.2)]{\includegraphics[width=0.23\textwidth]{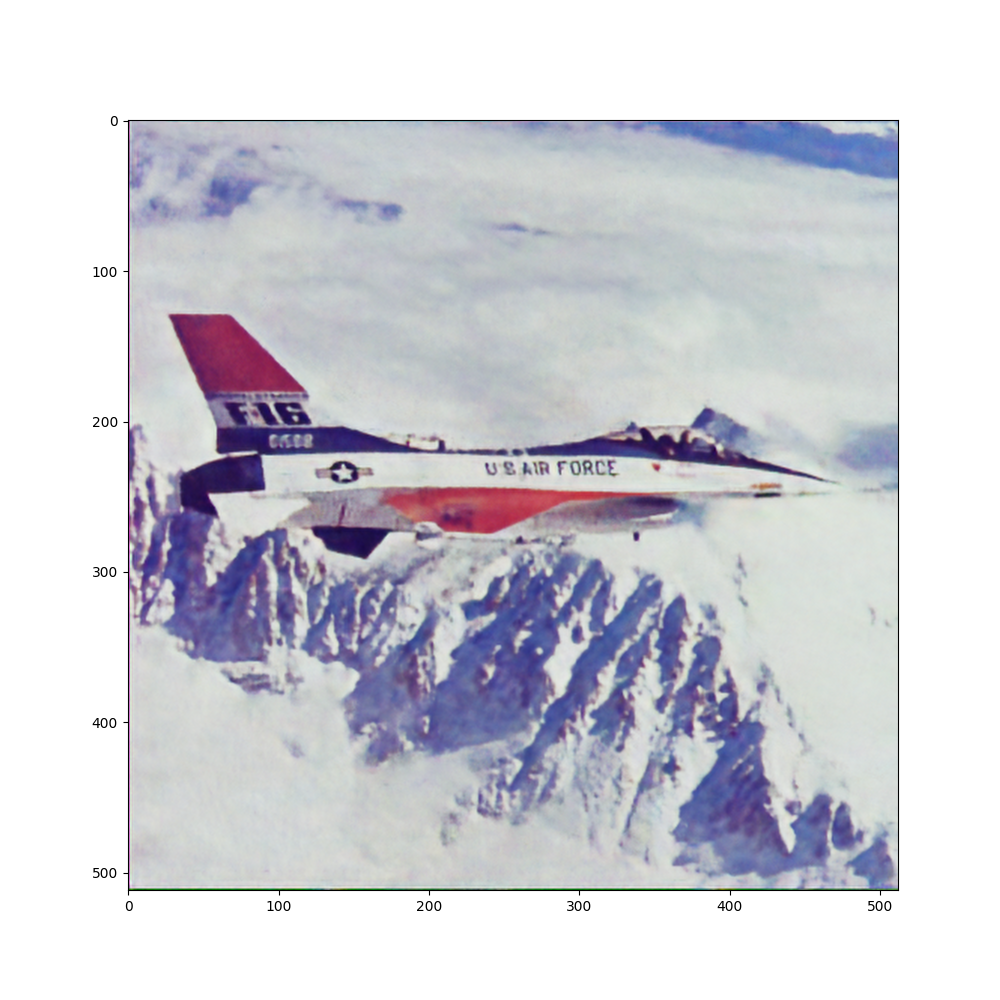}} \
    \subfloat[Learning Curves]{\includegraphics[width=0.23\textwidth]{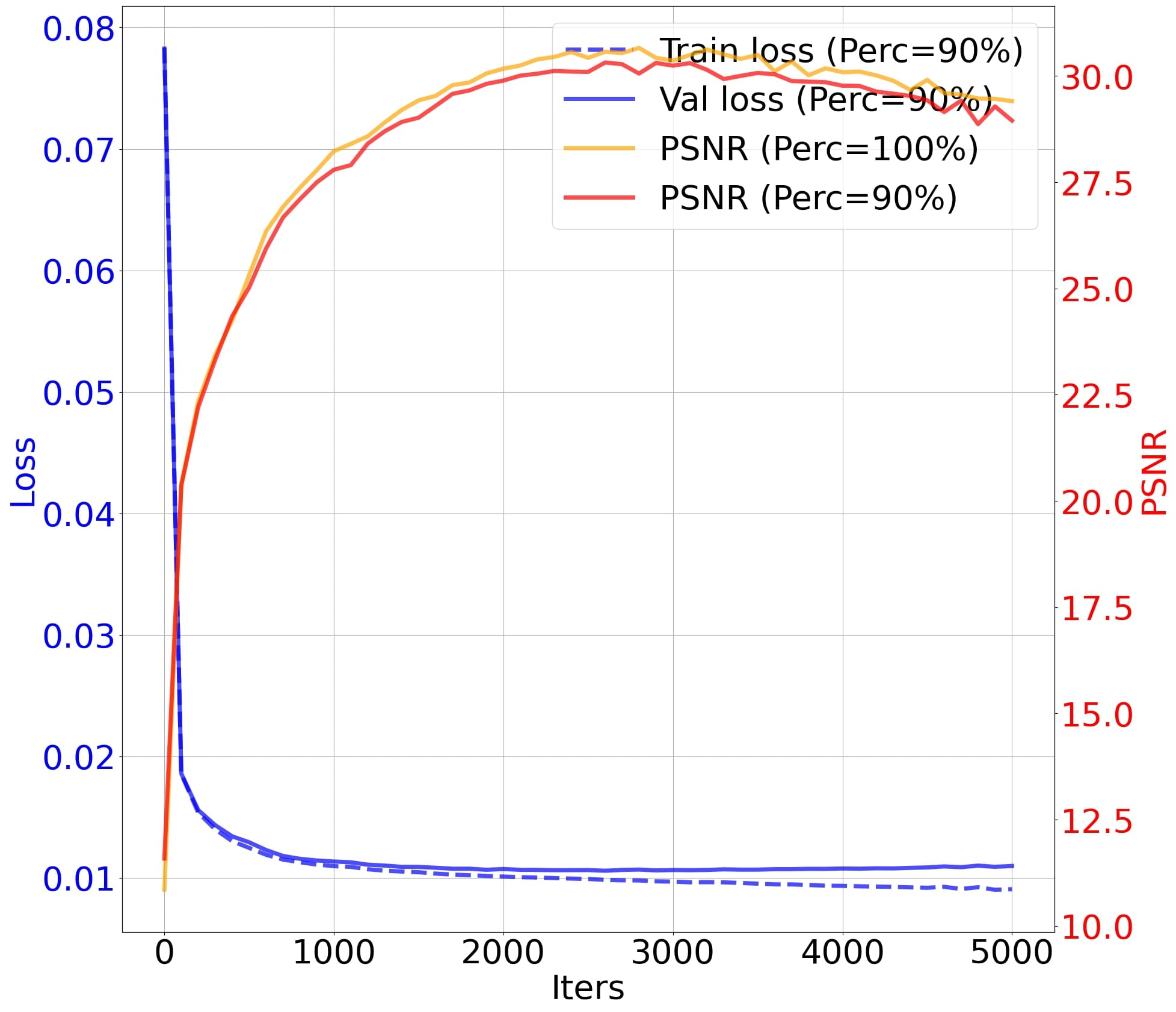}}\\
    
    \subfloat[Noisy Image]{\includegraphics[width=0.23\textwidth]{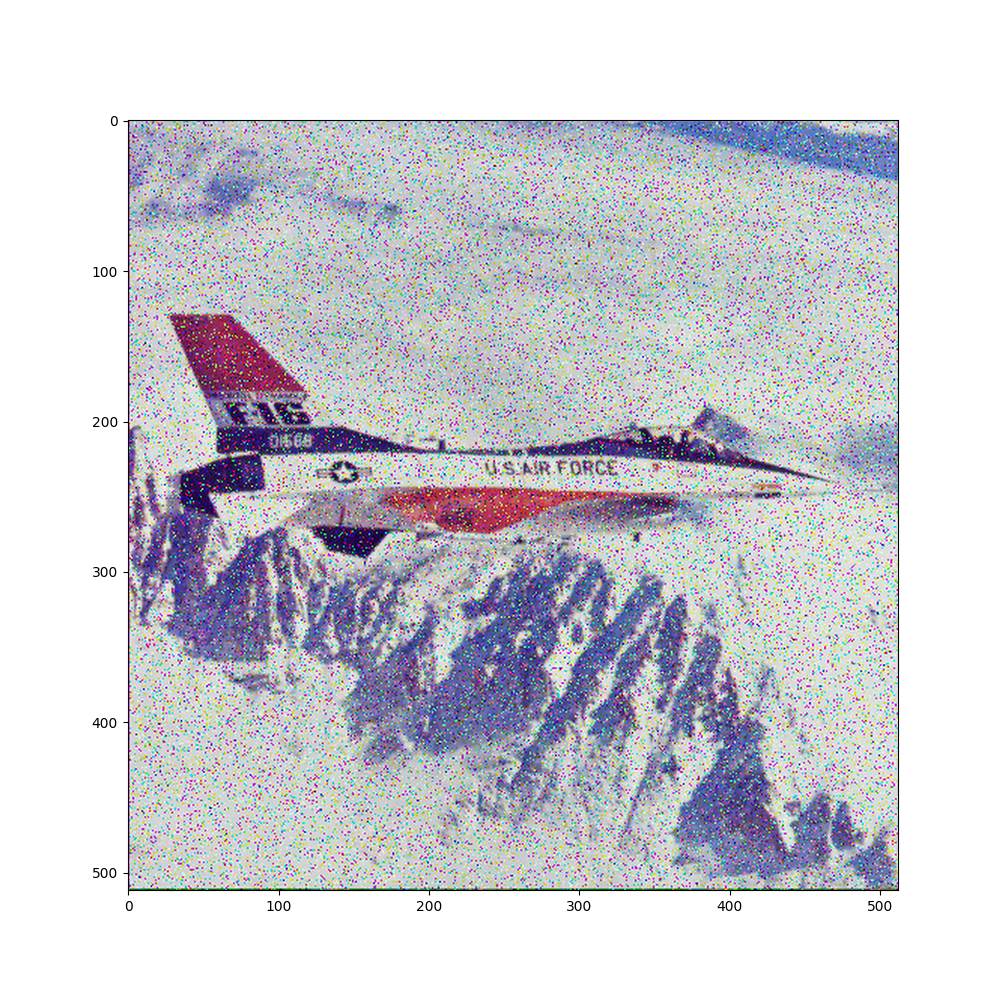}} \
    \subfloat[Best Val Loss (25.8) ]{\includegraphics[width=0.23\textwidth]{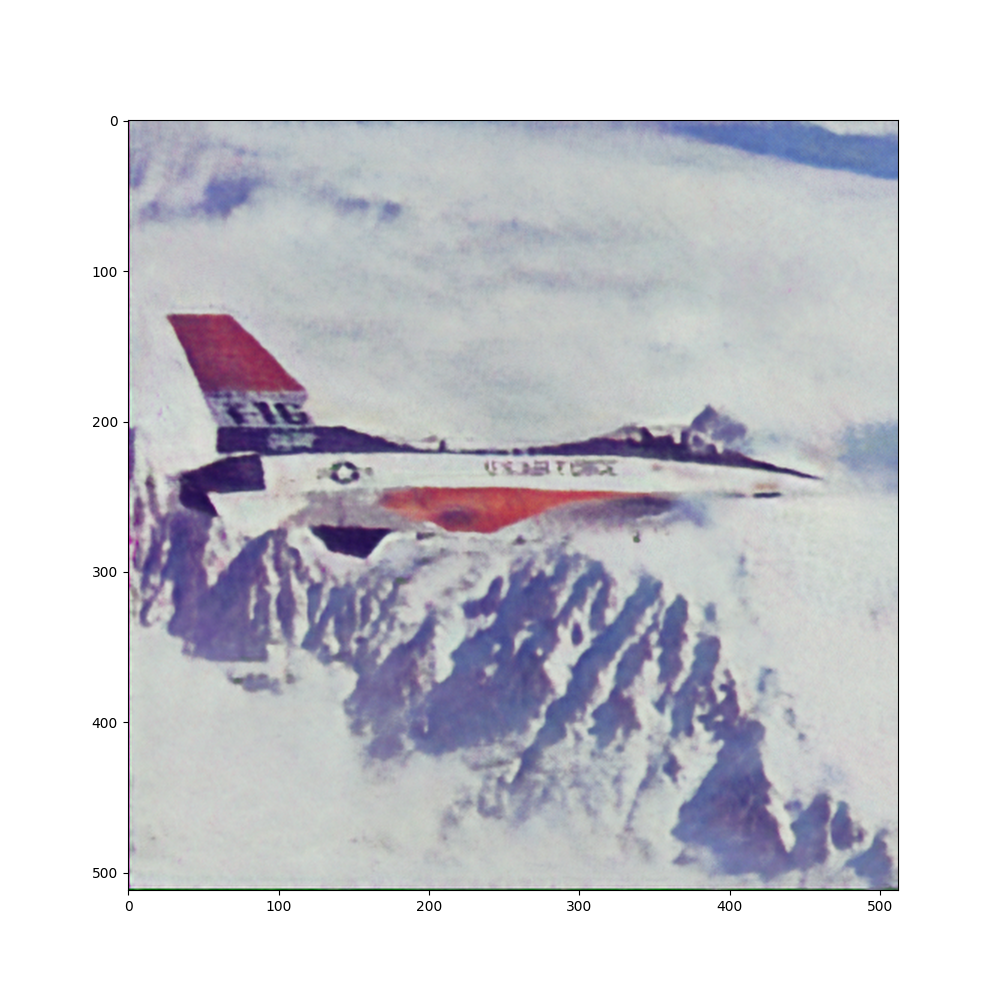}} \
    \subfloat[Best PSNR (25.9)]{\includegraphics[width=0.23\textwidth]{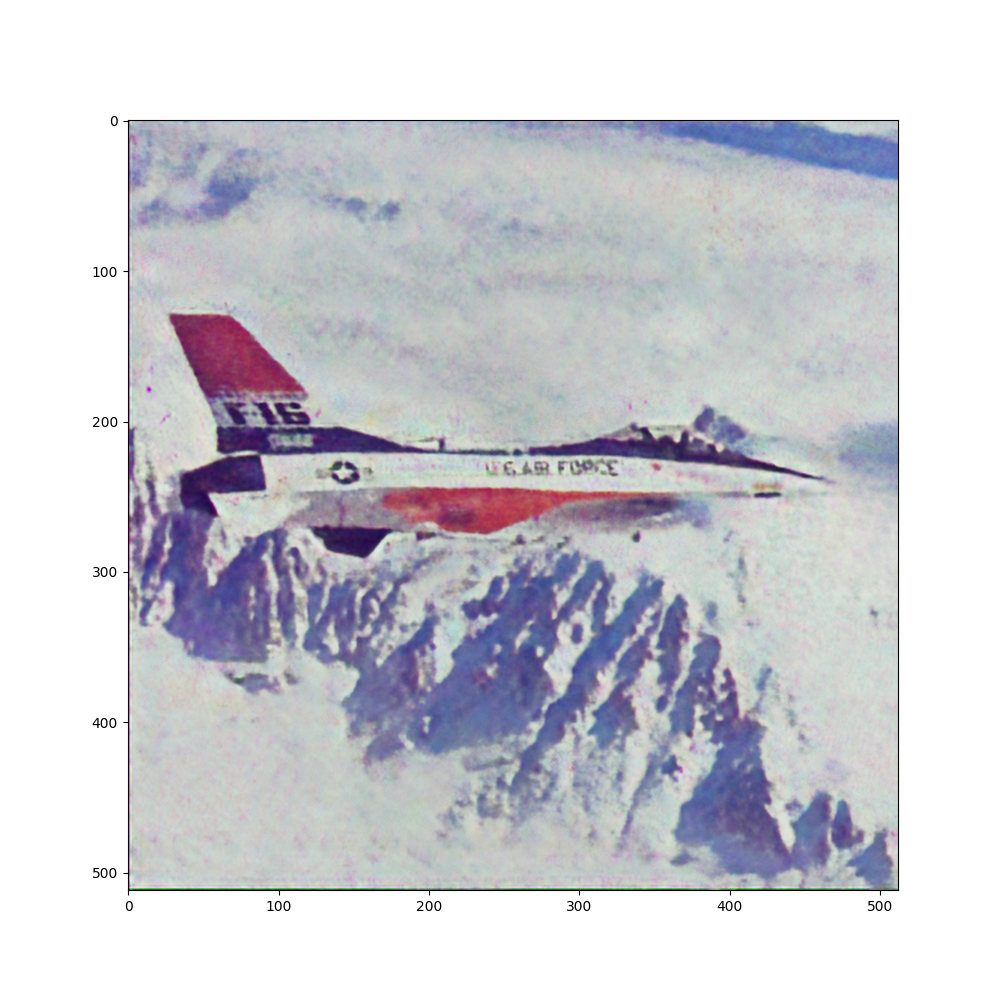}} \
    \subfloat[Learning Curves]{\includegraphics[width=0.23\textwidth]{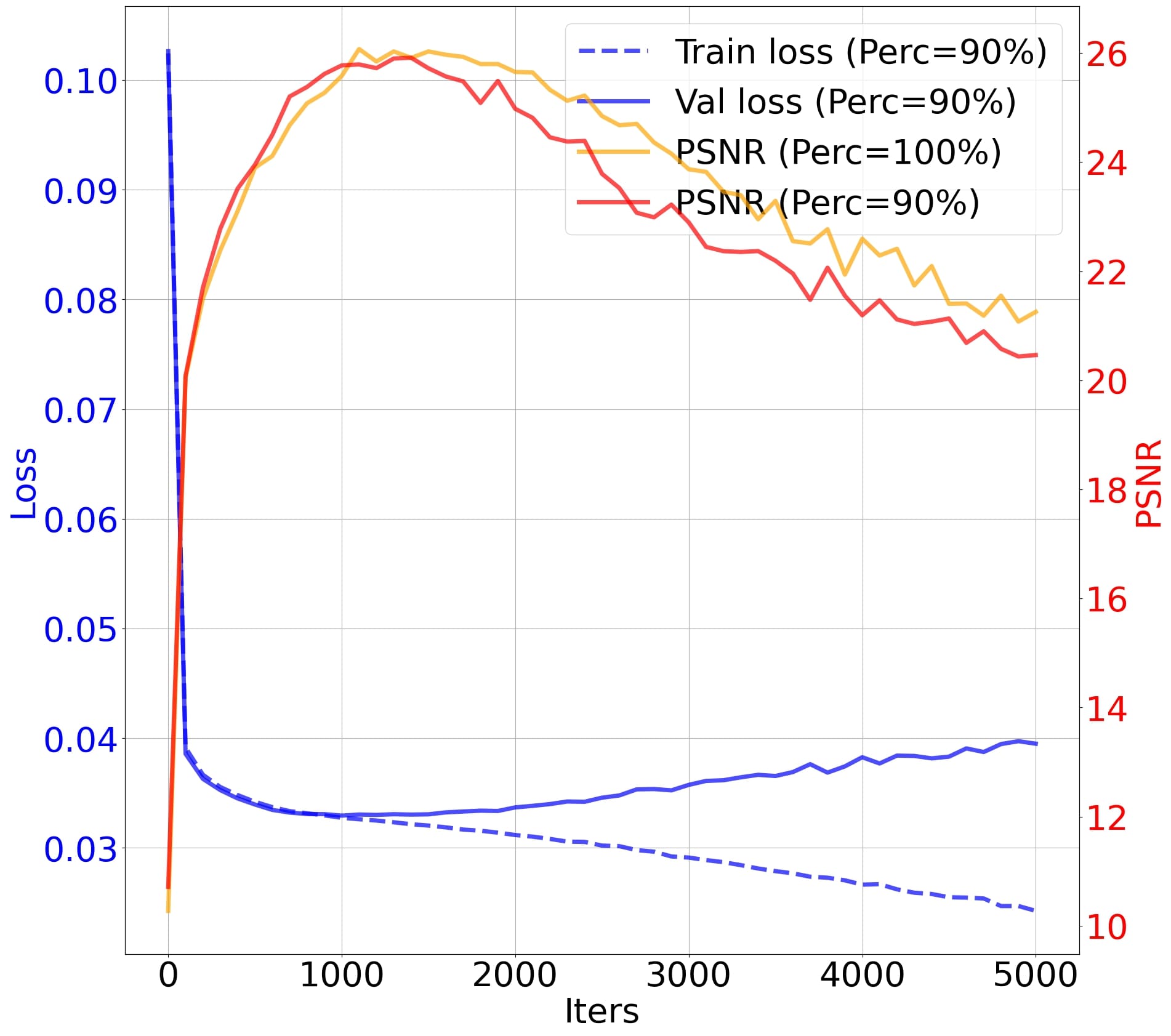}}
    \caption{Image denoising by DIP. From left to right: the noisy image, the image with the smallest validation loss, the image with the best PSNR, and the learning curves in terms of training loss, validation loss, PSNR. Here, the top row is for Gaussian noise with mean 0 and variance 0.1, while bottom row is for salt and pepper noise where $10$ percent of pixels are randomly corrupted. In (d) and (h), Perc = 90\% means we train UNet on 90\% of pixels and the rest of 10\% pixels are used for validation; Perc = 100\% means we train the network with the entire pixels. The model is optimized with Adam for 5000 iterations. The numbers listed in the captions show the corresponding PSNR.
    }
    \label{fig:DIP}
\end{figure} 
In the last set of experiments, we test the performance of the validation approach on image restoration with deep image prior. We follow the experiment setting as the work \cite{you2020robust, ulyanov2018deep} for denoising task, except that we randomly hold out 10 percent of pixels to decide the images with best performance during the training procedure. Concretely, we train the identical UNet network on the normalized standard dataset\footnote{\url{http://www.cs.tut.fi/~foi/GCF-BM3D/index.htm##ref_results}} 
with different types of optimizer, noise and loss. In particular, the images are corrupted by two types of noises: $(i)$ Gaussian noise with mean 0 and standard deviation from $0.1$ to $0.3$, and $(ii)$
salt and papper noise where a certain ration (between $10\%$ to $50\%$) of randomly chosen pixels are replaced with either 1 or 0. We use SGD with learning rate $5$ and Adam with learning rate $0.05$. We evaluate the PSNR and val loss on the hold-out pixels across all experiment settings.  

In \Cref{fig:DIP}, we use Adam to train the network with L2 loss function for the Gaussian noise (at top row) and the salt and pepper noise (at bottom row) for 5000 iterations. From left to right, we plot the noisy image, the image with the smallest validation loss, the image with the best PSNR w.r.t. ground-truth image, and the learning curves, and the dynamic of training progress. We observe that for the case with Gaussian noise in the top row, our validation approach finds an image with PSNR $30.1544$, which is close to the best PSNR $30.2032$ through the entire training progress. Similar phenomenon also holds for salt and pepper noise, for which our validation approach finds an image with PSNR $25.7735$, which is close to the best PSNR $25.8994$. We note that finding the image with best PSNR is impractical without knowing the ground-truth image. On the other hand, as shown in the learning curves,  the network overfitts the noise without early stopping. Finally, one may wonder without the holding out 10 percent of pixels will decreases the performance. To answer this question, we also plot the learning curves (orange lines) in \Cref{fig:DIP} (d) and (h) for training the UNet with entire pixels. We observe that the PSNR will only be improved very slightly by using the entire image, achieving PSNR  $30.6762$ for Gaussian noise and PSNR $26.0728$ for salt-and-pepper noise. We observe similar results on different images, loss functions, and noise levels; see \Cref{sec: DIP_appendix} for more results. 

\section{Discussion}
\label{sec: conclusion}
We analyzed gradient descent for the noisy over-parameterized matrix recovery problem where the rank is overspecified, and the global solutions overfit the noise and do not correspond to the ground-truth matrix. Under the restricted isometry property for the measurement operator, we showed that gradient descent with a validation approach achieves nearly information-theoretically optimal recovery. 
Based on recent works in the noiseless regime \cite{soltanolkotabi2023implicit,ma2024convergence}, we think extending our analysis to a non-symmetric matrix $\truX$, for which we need to optimize over two factors $U$ and $V$, is promising, while extending our analysis to the matrix completion problem with Bernoulli sampling, as supported by the experiments, is still challenging due to the dependence between the training and validation samples. Moreover, it would be interesting to see whether one can actually stop the method early, e.g., the method stops after the validation error keeps increasing for 10 consecutive iterations, to have some computational savings. Answering this question requires the characterization of the overfitting phase, which our analysis does not cover.
\section*{Acknowledgement}
This work was supported by NSF grants CCF-2023166, CCF-2008460, and CCF-2106881. We thank the editors, the anonymous reviewers, and Yiyang Yang for their suggestions in improving the paper.
{
\bibliographystyle{unsrt}
\bibliography{biblio/optimization,biblio/learning}
}
\appendix

\section{Additional Related Work} \label{sec: Ad_RW}

Apart from dense Subgaussian noise discussed in the main text, over-parameterized matrix recovery has also been studied in the presence of \emph{sparse} additive noise \cite{you2020robust,ding2021rank,ma2021sign,ma2022global}, which either models the noise as an explicit term in the optimization objective \cite{you2020robust}, or uses subgradient methods with diminishing step sizes for the $\ell_1$ loss of \cite{ma2021sign,ding2021rank,ma2022global}.

We note the work \cite{ma2022global}  remarkably extends the approach of $\ell_1$ loss to the Gaussian noise, based on the property called sign-RIP for the sensing operator. Unlike the standard RIP, the sign-RIP is only known to be satisfied by Gaussian distribution, and may not hold for other sub-Gaussian distributions, such as the Rademacher distribution, as the rotation invariance of Gaussian is central to its proof.
Also, the subgradient method requires diminishing step sizes, which need to be fine-tuned in practice. 

\section{Details of experiments in Section \ref{sec:intro}}
\label{sec: expeirements_intro}
In this section, we describe the detail of the experiments performed in Section \ref{sec:intro}. 

\paragraph{Experiment detail in Figure \ref{fig: Recover error and Loss vs the parametrized rank}} We set $n= 50$, $m = 1000$, $\sigma  =0.3$, and $\trur = 5$. For the method proposed in \cite{zhuo2021computational}, and each parametrized rank $r$ in
$\{1,5,10,15,20,25,30,35,40,45,50\}$, we generate a new operator $\calA$ with standard Gaussian entries, and a new $\truX = UU^\top/\fronorm{UU^\top}$ where $U \in \mathbb{R}^{n\times \trur}$ with standard Gaussian entries. We run the gradient descent method coupled with spectral initialization in  \cite{zhuo2021computational} (with a learning rate $\eta = 0.25$) for one thousand iterations and record the recovery error and training error of the last iterate $U_{1000}$. 
For each $r$, we repeat the process for $20$ times, and average the recovery error and the training error. 

For our method, and each parametrized rank $r$ in
$\{1,\;5,\;10,\;15,\;20,\;25,\;30,\;35,\;40,\;45,\;50\}$, we again generate a new operator $\calA$ with standard Gaussian entries, and a new $\truX = UU^\top/\fronorm{UU^\top}$ where $U \in \mathbb{R}^{n\times \trur}$ with standard Gaussian entries. We use $0.95m$ samples for training, and $0.05m$ for validation. We run our method (\Cref{alg: main} with an initialization scale $\alpha =10^{-9}$ and the above training and validation set  (with a learning rate $\eta = 0.25$) for one thousand iteration, and record the recovery error of the iterate determined by the validation approach. For each $r$, we repeat the process for $20$ times, and average the recovery error.

\paragraph{Experiment detail in Figure \ref{fig: Recover error vs the dimension}} We set $m = 4n\trur$, $\trur =5$, and $\sigma = 1/n$, and $r= n$. For each method and each choice of $n \in \{10, \;20, \;30, \;40, \;50, \;60, \;70,\; 80\}$, we repeat the process of generation of $\calA$ and the matrix $\truX$, and the methods for 20 times. Each methods is performed with the same choices of parameter in Figure \ref{fig: Recover error and Loss vs the parametrized rank}. 

\paragraph{Experiment detail in Figure \ref{fig:overfit}} We set $n=50$, $m=1200$, and $\trur = 5$. We generate an operator $\calA$ with standard Gaussian entries, and a $\truX = UU^\top/\fronorm{UU^\top}$ where $U \in \mathbb{R}^{n\times \trur}$ with standard Gaussian entries. We use $0.9m$ samples for training, and $0.1m$ for validation. We run our method (\Cref{alg: main} with an initialization scale $\alpha =10^{-6}$ and the above training and validation set  (with a learning rate $\eta = 0.25$) for one thousand iteration, and record the recovery error, validation error, and the training loss.

\section{Proof of Theorem \ref{thm: deterministicGurantee}}\label{sec:proofmainresults}

In this section, we prove our main result Theorem \ref{thm: deterministicGurantee}. We first present
the proof strategy. Next, we provide detailed lemmas to characterize the three phases described in the strategy. Theorem \ref{thm: deterministicGurantee} follows immediately from  these lemmas.

For the ease of the presentation, in the following sections, we absorb the additional
$\frac{1}{\sqrt{m}}$ factor into $\Amap$ and that the linear map $\Amap$ satisfies $(k,\delta)$-RIP
means that
\[
\frac{\twonorm{\Amap(X)}^2}{\fronorm{X}^2}\in [1-\delta,1+\delta],\quad \forall\,X ~ \text{ with} ~ \rank(X)\leq k.
\]
We also decompose $\truX = \truU\truU^\top$. We write $\cond = \frac{\sigma_1(\truU)}{\sigma_{\trur}(\truU)}$. Note that $\cond^2 = \kappa$. We define the noise matrix $E = \Amap^*(e)$.  We denote the product iterate $X_t = U_tU_t^\top$ and its difference to $\truX$,
$\Dt = \truX - U_tU_t^\top$. We shall define $W_t \in \mathbb{R}^{r\times \trur}$ and its orthogonal complement matrix $W_{t,\perp}\in \R^{r\times (r-\trur)}$ shortly.
Denote the adjusted iterate, $\tUt = \UUt \WWt$, its product iterate $\tilde{X}_t =\tUt \tUt^\top 
$, the orthogonal complement of the product $\Xtp = \UUt W_{t,\perp}W_{t,\perp}^\top \UUt^T$, and  
the difference of the product to $\truX$, $\tDt = \truX -\UUt W_tW_t^\top \UUt^T$.

\subsection{Proof strategy} \label{sec: proofStrategy}
Our proof is based on \cite{stoger2021small}, which deals with the case $E=0$, with a careful adjustment in handling the extra error caused by the noise matrix $E$. In this section, we outline the proof strategy and explain our contribution in dealing with the issues of the presence of noise.  The main strategy is to show a signal term converges to $\truX$ up to some error while a certain error term stays small.
To make the above precise, we present the following decomposition of $\UUt$.

\paragraph*{Decomposition of $U_t$} Consider the matrix $V_{\truX}^\top U_t \in \R^{\trur\times r}$ and its singular value decomposition $V_{\truX}^\top U_t = V_t \Sigma_t W_t^\top$ with $W_t \in \R^{r\times \trur}$. Also denote $W_{t,\perp}\in \R^{r\times (r-\trur)}$ as a matrix with orthnormal columns and is orthogonal to $W_t$. Then we may decompose $U_t$ into ``signal term'' and ``error term'':
\begin{equation}\label{eq: decomposition}
    U_t = \underbrace{U_t W_tW_t^\top}_{\text{signal term}} + \underbrace{U_t W_{t,\perp} W_{t,\perp}^\top}_{\text{error term}}.
\end{equation}
The above decomposition is introduced in \cite[Section 5.1]{stoger2021small} and may look technical at first sight as it involves singular value decomposition.
 A perhaps more natural way of decomposing $\UUt$ is to split it according to the column space of the ground truth $\truX$ as done in \cite{zhuo2021computational,ding2021rank}:
 $   \UUt = V_{\truX} V_{\truX}^\top \UUt + V_{\truX^\perp} V_{\truX^\perp}^\top \UUt.$
However, as we observed from the experiments (not shown here), with the small random initialization, though the signal term $V_{\truX}^\top \UUt$ does increase at a fast speed, the error term $V_{\truX^\perp}^\top \UUt$ could also increase and does not stay very small. Thus, with $2\trur$-RIP only, the analysis becomes difficult as $V_{\truX^\perp}^\top \UUt$ could be potentially high rank and large in the nuclear norm. 

\paragraph*{Critical quantities for the analysis} What kind of quantities shall we look at to analyze the convergence of $U_t U_t^\top $ to $\truX$? The most natural one perhaps is the distance measured by the Frobenius norm:
$\norm{U_tU_t^\top - \truX}{\tiny \text{F}}.$
However, in the initial stage of the gradient descent dynamic with small random initialization \eqref{eq: gd}, this quantity is actually almost stagnant. As in \cite{stoger2021small}, we further consider the following three quantities to enhance the analysis: (a) the magnitude of signal term, $\sigma_{\trur}(U_tW_t)$, (b) the magnitude of error term, $\specnorm{U_{t}W_{t,\perp}}$, and (c) the alignment of column space between signal to ground truth, $\specnorm{V_{\truX^\perp}^\top V_{U_tW_t}}$.
Here, we assume $U_tW_t$ is full rank, which can be ensured at initial random initialization and proved by induction for the remaining iterates. Note that by definition, we have the following equality \cite[Lemma 5.1]{stoger2021small}, which is employed often in the analysis,
\begin{equation} \label{eq: VUtWt}
V_{\truX} U_t =  V_{\truX} U_tW_{t}.
\end{equation}


\paragraph*{Four phases of the gradient descent dynamic \eqref{eq: gd}} Here we described the four phases of the evolution of \eqref{eq: gd} and how the corresponding quantities change. The first three phases will be rigorously proved in the appendices.
\begin{enumerate}
    \item The first phase is called the alignment phase. In this stage, there is an alignment between column spaces of the signal $\UUt W_t$ and the ground truth $\truX$, i.e., the quantity  $\specnorm{V_{\truX^\perp}^\top V_{U_t W_t}}$ decreases and becomes small. Moreover, the signal $\sigma_{\trur}(U_tW_t)$ is larger than the error $\specnorm{U_{t}W_{t,\perp}}$ at the end of the phase though both terms are still as small as initialization.
    \item The second phase is the signal-increasing stage. The signal term $\UUt W_t$ matches the scaling of $(\truX)^{\frac{1}{2}}$ (i.e.,  $\sigma_{\trur}(U_tW_t) \geq \frac{\sqrt{\sigma_{\trur}(\truX)}}{10}$) at a geometric speed, while the error term $\specnorm{U_{t}W_{t,\perp}}$ is almost as small as initialization and the column spaces of the signal $\UUt W_t$ and the ground truth $\truX$ still align with each other.
    \item The third phase is the local convergence phase.  In this phase, the distance $\fronorm{U_tU_t^\top - \truX}$ starts geometrically decreasing up to the statistical error. The analysis of this stage deviates from the one in \cite{stoger2021small} due to the presence of noise. In this stage, both $\specnorm{U_{t}W_{t,\perp}}$ and  $\specnorm{V^\top_{\truX^\perp} V_{\UUt W_t}}$ are of similar magnitude as before.
    \item The last phase is the over-fitting phase. Due to the presence of noise, the gradient descent method will fit the noise, and thus $\fronorm{\UUt\UUt^\top - \truX}$ will increase, and $\UUt$ approaches an optimal solution of \eqref{eq:sensing fact} which results in over-fitting.
\end{enumerate}
In short, we observe that the first two phases behave similarly to the noiseless case, and thus, we only provide the necessary details in adapting the proof in \cite{stoger2021small}. However, the third phase requires additional efforts to deal with the noise matrix $E$. Next, we describe the effect of small random initialization and why $2\trur$-RIP is sufficient.
\paragraph*{Blessing and curse of small random initialization} As mentioned after Theorem \ref{thm: mainOptError}, we require the initial size to be very small. Since the error term increases at a much lower speed compared with the signal term, small initialization ensures that $\UUt W_t W_t^\top U_t$ gets closer to $\truX$ while the error $\UUt W_{t,\perp}$ stays very small. The smallness of the error in the later stage of the algorithm is a blessing of small initialization.  However, since $\alpha$ is very small and the direction $U$ is random, the signal $U_tW_t$ initially is also very weak compared to the size of $\truX$. The initial weak signal is a curse of small random initialization.

\paragraph*{Why is $2\trur$-RIP enough?} Since we are handling $n\times r$ matrices, it is puzzling why, in the end, we only need $2\trur$ RIP of the map $\calA$. As it shall become clear in the proof, the need for RIP is mainly for bounding $\specnorm{(\Id - \frac{\calA^*\calA}{m})\Dt}$. With the decomposition \eqref{eq: decomposition}, we have 
\begin{equation}\label{eq: I-AAinequality}
\begin{aligned}
      \specnorm{(\Id - {\calA^*\calA}{})\Dt}
     & \leq   \specnorm{(\Id - {\calA^*\calA})(\tDt+ \Xtp)}
    &  \overset{(a)}{\leq}  \delta \specnorm{\tDt } + \delta \nucnorm{\Xtp}.
\end{aligned}
\end{equation}
Here, in the inequality $(a)$, we use the spectral-to-Frobenius bound \eqref{eq: specFro} for the first term and the spectral-to-nuclear bound \eqref{eq: specNuc} for the second term. Recall that (i) $\Xtp = \UUt W_{t,\perp} (\UUt W_{t,\perp})^\top$ and the error term $\UUt W_{t,\perp}$ is very small due to the choice of $\alpha$, and (ii) $\tDt$ is the quantity of interest. Thus,  bounding $\specnorm{(\Id - {\calA^*\calA}{})\Dt}$ becomes feasible.

\subsection{Analyzing the three phases and the proof of Theorem \ref{thm: mainOptError}}
We first show the lemma stating the progress after first two phases. It rigorously characeterizes the behavior of the three  quantities $\sigma_{\trur}(U_tW_t)$, $\specnorm{V_{\truX^\perp}^\top V_{U_tW_t}}$, and $\specnorm{U_tW_{t,\perp}}$ at the end of the second phase.

\begin{lemma}
\label{lemma: phaseIandII}
Let $U \in \mathbb{R}^{n \times r}$ be a random matrix with i.i.d. entries with distribution $ \mathcal{ N} \left(0, 1/\sqrt{r} \right) $. Assume that the linear map $\Amap$ satisfies $(2\trur,\delta)$ RIP with $\delta\leq \frac{c_1}{\cond^4\sqrt{\trur}}$ and the bound $\specnorm{E}\leq c_1\cond^{-2} \sigma_{\trur}(\truX)$.
	Let $U_0=\alpha U$ for any 
\begin{equation}\label{ineq:assumptiononalpha:simplified}
	\alpha   \leq  c_1
	\min\left\{ \frac{\left(C\kappa n^2 \right)^{-6\kappa} }{\kappa^4 n^4} \sqrt{\specnorm{\truX}},\; \kappa\sqrt{\frac{n\trur}{m\specnorm{\truX}}}\sigma\right\}.
 \end{equation}
	Assume that the step size satisfies $ \eta \le c_2 \cond^{-2} \specnorm{\truU}^2  $. 
 Then with probability at least $1- C\exp(-cr) - \frac{C}{n}$, after at most $t_1$  iterations where $t_1 \lesssim 
 \frac{1}{\eta \sigma_{\min}^2(\truU)}  
 \left( \ln \left(C\kappa n^2\right) + 
 \ln \left(\frac{\sigma_{\min}(\truU)}{\gamma}\right)\right)$ for some $\gamma \in \alpha [ \frac{c}{n}, C\kappa^2 n^2]$,
 we have 
\begin{align}
& \sigma_{\min} \left(  V_{\truX}^T  U_{t} \right) \ge \frac{\sigma_{\min}(\truU)}{\sqrt{10}} \\
& \specnorm{U_{t}W_{t,\perp}} \le  2 \sigma_{\min}^{\frac{1}{8}}(\truU)\gamma^{\frac{7}{8}},\label{ineq:induction2} \\
& \specnorm{U_{t}} \le 3\specnorm{\truU},  
\quad \text{and}\quad
\specnorm{ V_{\truX^\perp}^T V_{U_{t}W_{t}}   } \le c_2  \cond^{-2} . \label{ineq:induction4}
\end{align}
Here  $c$, $C$, $c_1$, $c_2>0$ are absolute numerical constants.
\end{lemma}

\begin{proof}
The proof can be adapted from the one in \cite{stoger2021small} dealing with the first two phases. Here, we only provide the necessary details:
\begin{enumerate}
    \item In the first phase, one uses the proof of \cite[Lemma 8.7]{stoger2021small} and replaces the iterated matrix $M=\Amap^*\Amap(\truX)$ with $M=\Amap^*\Amap(\truX) + E$. 
 \item In the second phase, one uses \cite[Proof of Theorem 9.6, Phase II]{stoger2021small} and replaces the iterated matrix  $(\Id-\Amap^*\Amap)(\Dt)$ by $(\Id-\Amap^*\Amap)(\Dt) +E$.
 \item In combining the above proof, set the quantities $\epsilon $ and $\beta$ in \cite[Lemma 8.7]{stoger2021small} to be $\frac{1}{n}$ and $\frac{\alpha}{4\gamma}$ respectively.
 \item All the argument there then works for the noisy case with the extra condition: $\specnorm{E}\leq\frac{c_1\sigma_{\trur}(\truX)}{\kappa^2}$ for some small $c_1$. 
\end{enumerate}
\end{proof}

Different from the noiseless case, the iterate $\UUt$ can only get close to $\truU$ up to some level due to the presence of noise. The following lemma characterizing the quantity $\fronorm{\Dt}$ is our main technical endeavor, whose proof is in Section \ref{sec: PhaseIIandIII}.

\begin{lemma}\label{thm:PhaseIII}
Instate the assumptions and notations in Lemma \ref{lemma: phaseIandII}.
and define  
\begin{equation}\label{ineq:iterationsbound}
t_\Delta =  \frac{\ln \left( \max \left\{   1; \frac{\cond  }{ \min \left\{ r;n \right\}   -\trur} \right\}   \frac{\specnorm{\truU}}{\gamma} \right)}{\eta \sigma_{\min } \left(\truU\right)^2 }.
\end{equation}
If $r>\trur$, then after $
\hat{t} - t_1 \lesssim  t_\Delta$
iterations, it holds that
\begin{align*}
\fnorm{\Delta_{\hat{t}} }  \lesssim & \frac{    \left(  \min \left\{ r;n \right\} -\trur  \right)^{3/4} \trur^{1/2}}{\cond^{3/16}  }  \cdot     \gamma^{21/16}  \specnorm{\truU}^{21/16}
& + \trur^{1/2}\cond^2 \specnorm{E}.
\end{align*}
If $r=\trur$, then for any $t\geq t_1$, we have
\begin{align*}
\fnorm{\Dt} \lesssim  \trur^{1/2}\left(  1 - \frac{\eta}{400}  \sigma_{\min} \left(  \truU\right)^2 \right)^{t- t_1}   \specnorm{\truU}^2
      +\trur^{1/2}\specnorm{E}\cond^2.
\end{align*}
\end{lemma}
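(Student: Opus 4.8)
The plan is to carry out, for $t \ge t_1$, a noise-aware version of the exact-rank local-convergence argument: establish a one-step inequality showing that $\fnorm{U_tU_t^\top - \truX}$ contracts at rate $1 - c\mu\sigma_{\min}^2(\truU)$ down to a floor determined by (i) a restricted-isometry term carried by the slowly growing error term $\specnorm{U_tW_{t,\perp}}$ and (ii) the statistical term $\specnorm{E}$, and then to pick the stopping index $\hat t$ so that the geometric transient has been brought down to that floor. Throughout I would use the decomposition \eqref{eq: decomposition} and write the update as $U_{t+1} = U_t - \mu(U_tU_t^\top - \truX)U_t + \mu[\calD(U_tU_t^\top - \truX)]U_t + \mu E U_t$ with $\calD = \Id - \Amap^*\Amap$, treating the last two summands as perturbations.

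First I would derive the signal recursion. Split $\truX - U_tU_t^\top$ into the rank-$\le 2\trur$ part $\truX - U_tW_tW_t^\top U_t^\top$ and the error part $U_tW_{t,\perp}W_{t,\perp}^\top U_t^\top$. Using the Phase~II guarantees of Lemma~\ref{thm:PhaseII} --- namely $\sigma_{\min}(V_{\truX}^\top U_t) \ge \sigma_{\min}(\truU)/\sqrt{10}$, $\specnorm{U_t} \le 3\specnorm{\truU}$, and $\specnorm{V_{\truX^\perp}^\top V_{U_tW_t}} \le c_2\cond^{-2}$ --- the unperturbed map $X \mapsto X - \mu(XX^\top - \truX)X$ restricted to the signal subspace acts as a contraction on $\fnorm{\cdot - \truX}$, exactly as in the exact-rank ($r=\trur$) local-convergence computation. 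The $\calD$-perturbation is handled by Proposition~\ref{prop: spectofronucbound} --- this is precisely the ``why is $2\trur$-RIP enough'' mechanism: the spectral-to-Frobenius bound \eqref{eq: specFro} controls the low-rank part by $\delta\fnorm{\cdot}$, absorbed into the contraction since $\delta \lesssim \cond^{-2}\trur^{-1/2}$, while the spectral-to-nuclear bound \eqref{eq: specNuc} controls the error part, which together with the error block's own direct contribution to the residual amounts to $\lesssim (\min\{r,n\}-\trur)\specnorm{U_tW_{t,\perp}}^2$. The noise summand $\mu E U_t$ affects the (effectively rank-$O(\trur)$) signal residual by at most $\mu$ times $\specnorm{E}\specnorm{U_t}^2$ converted from spectral to Frobenius norm, i.e.\ $\lesssim \mu\sqrt{\trur}\,\specnorm{E}\specnorm{\truU}^2$, and the standing bound $\specnorm{E} \le c\cond^{-2}\sigma_{\min}(\truX)$ keeps it subordinate to the contraction. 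Collecting terms,
\[
\fnorm{U_{t+1}U_{t+1}^\top - \truX} \le \bigl(1 - c\mu\sigma_{\min}^2(\truU)\bigr)\fnorm{U_tU_t^\top - \truX} + C\mu\Bigl[(\min\{r,n\}-\trur)\specnorm{U_tW_{t,\perp}}^2 + \sqrt{\trur}\,\specnorm{E}\specnorm{\truU}^2\Bigr].
\]

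Second I would control the error term: adapting the estimate \eqref{ineq:induction2} of Lemma~\ref{thm:PhaseII} past $t_1$ and accounting for the fact that the subspaces $W_t, W_{t,\perp}$ themselves rotate with $t$, one gets $\specnorm{U_{t+1}W_{t+1,\perp}} \le \bigl(1 + C'\mu c_2\sigma_{\min}^2(\truU)\bigr)\specnorm{U_tW_{t,\perp}}$ up to negligible lower-order terms, so over the $\hat t - t_1$ iterations of this phase --- bounded as in \eqref{ineq:iterationsbound} --- it grows from $\lesssim \gamma$ only by a controlled power of $\specnorm{\truU}/\gamma$. Unrolling the signal recursion from $t_1$, where $\fnorm{U_{t_1}U_{t_1}^\top - \truX} \lesssim \specnorm{\truU}^2$ by $\specnorm{U_{t_1}} \le 3\specnorm{\truU}$, produces a geometric transient plus a floor built from the $\gamma$-dependent and $\specnorm{E}$-dependent terms. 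Picking $\hat t - t_1$ as in \eqref{ineq:iterationsbound} balances the transient $(1-c\mu\sigma_{\min}^2(\truU))^{\hat t - t_1}\specnorm{\truU}^2$ against the slowly growing error-term residual, of order $(1 + C'\mu c_2\sigma_{\min}^2(\truU))^{2(\hat t - t_1)}(\min\{r,n\}-\trur)\gamma^2/\sigma_{\min}^2(\truU)$; since decay here runs at rate $\propto 1$ while the error term grows at the much smaller rate $\propto c_2$, this trade-off --- with the specific choice of constants --- turns the naive exponent $2$ into $21/16$ and yields the prefactor $(\min\{r,n\}-\trur)^{3/4}\trur^{1/2}\cond^{-3/16}$, while the $\specnorm{E}$-term simply passes through as $\sqrt{\trur}\,\cond^2\specnorm{E}$ after dividing by the contraction gap $c\mu\sigma_{\min}^2(\truU)$ and normalizing by $\specnorm{\truU}^2$. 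When $r = \trur$ the block $W_{t,\perp}$ is vacuous, the error-term contribution disappears, and the same recursion unrolls directly to the stated pure geometric-plus-noise bound for every $t \ge t_1$.

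The hard part will be the bookkeeping around the error term and the stopping time: one must (a) propagate $\specnorm{U_tW_{t,\perp}}$ accurately through Phase~III even though $W_t, W_{t,\perp}$ move with $t$, which needs quantitative control of $\specnorm{W_{t+1} - W_t}$-type quantities extracted from the update together with the lower bound on $\sigma_{\min}(U_tW_t)$; and (b) carry out the stopping-time optimization so that the exponents ($3/4$, $1/2$, $3/16$, $21/16$) and constants come out exactly as claimed, while checking that the chosen $\hat t$ is consistent with the hypotheses on $\gamma$ and $\alpha$ inherited from Lemmas~\ref{lemma:spectralmain2:simplified} and \ref{thm:PhaseII} and with $\delta \lesssim \cond^{-2}\trur^{-1/2}$. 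Everything else is a careful, noise-tracking rerun of the Phase~III argument of \cite{stoger2021small}.
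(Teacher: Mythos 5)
Your high-level plan (one-step contraction, slowly growing error block, stopping-time balance producing the $21/16$ and $3/4$ exponents, vacuous $W_{t,\perp}$ when $r=\trur$) matches the paper's, but the quantity you propose to contract is the wrong one, and this is precisely where the argument breaks. You run the recursion directly on $\fnorm{U_tU_t^\top - \truX}$ and claim the $\calD$-perturbation is "absorbed into the contraction" via Proposition~\ref{prop: spectofronucbound}. That proposition only bounds the \emph{spectral} norm of $(\Id-\calA^*\calA)(\cdot)$; to close a Frobenius-norm recursion you would need a bound of the form $\fnorm{(\Id-\calA^*\calA)(\truX - U_tU_t^\top)} \lesssim \cond^{-2}\fnorm{\truX - U_tU_t^\top}$, and $2\trur$-RIP gives no such control without paying a dimension factor (e.g.\ $\sqrt{n}$ from $\fnorm{\cdot}\le\sqrt{n}\specnorm{\cdot}$), which would destroy the result. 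The paper flags exactly this obstruction in footnote~\ref{ft: ripNote} and in the footnote following \eqref{ineq:induction9}, and its proof is restructured around it: the induction hypothesis \eqref{ineq:induction9} tracks the \emph{spectral} norm of the projected residual $\specnorm{V_{\truX}^\top(\truX - U_tU_t^\top)}$, the one-step contraction (Lemma~\ref{lemma:localconvergence}) is stated for that quantity with the error block entering through its nuclear norm, and the conversion to $\fnorm{U_{\hat t}U_{\hat t}^\top - \truX}$ happens only once at the very end via Lemma~\ref{lemma:technicallemma8}, which is where the $\trur^{1/2}$ prefactor is paid. Your placement of $\sqrt{\trur}$ inside the per-step noise term is a symptom of the same mismatch.

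The remaining bookkeeping you describe is broadly consistent with the paper: the error-block growth \eqref{ineq:induction6} combined with the Phase~II duration gives the $\sigma_{\min}(\truU)^{1/4}\gamma^{7/4}$ seed, the explicit choice $\hat t - t_1 \asymp \frac{300}{\mu\sigma_{\min}^2(\truU)}\ln(\cdots)$ balances the geometric transient against condition \eqref{intern:ineq6789}, and the noise floor $\trur^{1/2}\cond^2\specnorm{E}$ arises from summing the geometric series of per-step noise contributions. But as written, your first step does not go through with $2\trur$-RIP alone; you must replace the Frobenius recursion by the spectral-norm-of-projected-residual recursion (or supply an argument for the Frobenius-norm RIP perturbation bound that the authors state they could not obtain).
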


We end the subsection with the proof of 
\Cref{thm: deterministicGurantee}
\begin{proof}[Proof of Theorem \ref{thm: deterministicGurantee}]
Theorem \ref{thm: deterministicGurantee} is immediate by using \ref{lemma: phaseIandII} and \ref{thm:PhaseIII} and the range of $\gamma$ and $\alpha$. 
\end{proof}

\subsection{Proof of Lemma \ref{thm:PhaseIII}}\label{sec: PhaseIIandIII}
We start with a lemma showing that $\UUt \UUt^T$ converges towards $\truX$ up to some statistical error when projected onto the column space of $\truX$. The proof of the lemma can be found in
Section \ref{sec: proofofLemma:localconvergence}. It is an analog of  \cite[Lemma 9.5]{stoger2021small} with a careful adjustment of the norms and the associated quantities. See \Cref{ft: ripNote} for a detailed explanation.
\begin{lemma}\label{lemma:localconvergence}
	Assume that $ \specnorm{\UUt} \le 3\specnorm{\truU} $, $\specnorm{E}\leq \specnorm{\truU}^2$, $\nucnorm{\Xtp}\leq \specnorm{\truU}^2$, and $\sigma_{\min} \left(\tUt\right) \ge \frac{1}{\sqrt{10}} \sigma_{\min} \left(\truU\right)$. Moreover, assume that $\eta \le c \cond^{-2} \specnorm{\truU}^{-2} $,  $ \specnorm{V_{\truX^\perp}^T V_{\tUt}} \le c \cond^{-2}   $, and
	\begin{align}\label{ineq:intern773}
	  \specnorm{ \left( \Id- \mathcal{A}^* \mathcal{A} \right) \left(\Dt \right)  }
	   \le & c \cond^{-2}\big(    \specnorm{\tDt } + \nucnorm{\Xtp}\big)
	\end{align}
	where
	the constant $c>0$ is chosen small enough.
 \footnote{We note that a similar lemma \cite[Lemma 9.5]{stoger2021small} is established. Unfortunately, the condition there requires the RHS of \eqref{ineq:intern773} to be $\specnorm{\truX - U_tU_t^\top}$ instead of $\specnorm{\truX -\UUt W_t W_t^\top \UUt^T } + \nucnorm{U_t \Wtperp \Wtperp^\top \UUt}$. Such a condition, $\specnorm{ \left( \Id- \mathcal{A}^* \mathcal{A} \right) \left(\Dt \right)  }
	 \le  c \cond^{-2}\specnorm{\truX - U_tU_t^\top}$, can not be satisfied by $2\trur$-RIP alone as $U_tU_t^\top$ is not necessarily low rank. Indeed, in the later version \cite{stoger2022small}, the author changes the condition in the lemma, though still different from the one here. 
\label{ft: ripNote}. 
}
	Then, it holds that
	\begin{equation*}
	\begin{aligned}
		\specnorm{V_{\truX}^T \Delta_{t+1}}  
	\le &  \left(  1 - \frac{\eta}{200}  \sigma_{\min}^2 \left(  \truU\right) \right)\specnorm{  V_{\truX}^T    \Dt  }	
 &+ \eta  \frac{\sigma_{\min}^2 \left(\truU\right) }{100}  \nucnorm{  \Xtp }
	 + 18\eta \specnorm{\truU}^2 \specnorm{E}.
	\end{aligned}
	\end{equation*}
\end{lemma}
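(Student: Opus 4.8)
The plan is to expand one gradient step, left‑multiply by $V_{\truX}^\T$, and split the result into a signal term that contracts toward $0$, a RIP‑perturbation term, a noise term, and a second‑order remainder. Write $A_t := \UUt\UUt^\T$, $R_t := \truX - A_t$, and $N_t := \Amap^*\Amap(R_t) + E$; then $\Utplus = \UUt + \mu N_t\UUt$ and, expanding the outer product and using that $N_t$ is symmetric,
\[
\truX - \Utplus\Utplus^\T \;=\; R_t - \mu\bigl(N_t A_t + A_t N_t\bigr) - \mu^2 N_t A_t N_t .
\]
The structural fact that makes the column‑space splitting work is that, for the SVD $V_{\truX}^\T\UUt = V_t\Sigma_t W_t^\T$ defining $W_t$, one has $V_{\truX}^\T\UUt\Wtperp = V_t\Sigma_t(W_t^\T\Wtperp) = 0$; hence $V_{\truX}^\T$ annihilates anything right‑multiplied by $\UUt\Wtperp$. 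Writing $S_t := \UUt W_t W_t^\T\UUt^\T$ and $\mathcal E_t := \UUt\Wtperp\Wtperp^\T\UUt^\T$, this gives $V_{\truX}^\T A_t = V_{\truX}^\T S_t$, $V_{\truX}^\T R_t = V_{\truX}^\T(\truX - S_t)$, and $V_{\truX}^\T\mathcal E_t = 0$; also $V_{\truX}^\T\truX\,\UUt\Wtperp = 0$ since $\col(\truX)$ is $\truX$‑invariant.

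I would first dispose of the three ``easy'' contributions, after writing $N_t A_t + A_t N_t = (R_t A_t + A_t R_t) - \bigl((\Id-\Amap^*\Amap)(R_t)A_t + A_t(\Id-\Amap^*\Amap)(R_t)\bigr) + (E A_t + A_t E)$. The noise part is immediate: $\mu\bigl\|V_{\truX}^\T(E A_t + A_t E)\bigr\| \le 2\mu\specnorm{E}\|A_t\| \le 2\mu\specnorm{E}\cdot 9\specnorm{\truU}^2 = 18\mu\specnorm{\truU}^2\specnorm{E}$, which is exactly the last term of the claim, while the $E$‑dependence inside $N_t A_t N_t$ is $O(\mu^2)$ and is absorbed below. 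For the RIP‑perturbation, hypothesis \eqref{ineq:intern773} bounds $\|(\Id-\Amap^*\Amap)(R_t)\|$ by $c\cond^{-2}\bigl(\specnorm{\truX - S_t} + \nucnorm{\mathcal E_t}\bigr)$; combining with $\|A_t\|\le 9\specnorm{\truU}^2$, the identity $\specnorm{\truU}^2\cond^{-2} = \sigma_{\min}^2(\truU)$, and the column‑space alignment $\bigl\|V_{\truX^\perp}^\T V_{\UUt W_t}\bigr\|\le c\cond^{-2}$ together with $\|\UUt W_t\|\le 3\specnorm{\truU}$ (so $\bigl\|V_{\truX^\perp}^\T S_t\bigr\|\le 9c\,\sigma_{\min}^2(\truU)$ and hence $\specnorm{\truX - S_t}\le \bigl\|V_{\truX}^\T R_t\bigr\| + 9c\,\sigma_{\min}^2(\truU)$), this term is $O(c)\,\mu\sigma_{\min}^2(\truU)\bigl(\bigl\|V_{\truX}^\T R_t\bigr\| + \nucnorm{\mathcal E_t}\bigr)$ up to lower‑order pieces. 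The second‑order remainder uses $\|N_t\|\lesssim \|R_t\| + \|(\Id-\Amap^*\Amap)(R_t)\| + \specnorm{E}$, $\|A_t\|\le 9\specnorm{\truU}^2$, and $\mu\le c\cond^{-2}\specnorm{\truU}^{-2}$, giving $\mu^2\|N_t A_t N_t\| \lesssim c\,\mu\sigma_{\min}^2(\truU)\|R_t\|$, after which $\|R_t\|$ is reduced to $\bigl\|V_{\truX}^\T R_t\bigr\|$ plus the same lower‑order terms via $R_t = (\truX - S_t) - \mathcal E_t$ and $\bigl\|V_{\truX^\perp}^\T A_t\bigr\| \le \bigl\|V_{\truX^\perp}^\T S_t\bigr\| + \nucnorm{\mathcal E_t}$. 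For $c$ small enough each of these is absorbed either into the contraction factor or into the $\tfrac{\mu\sigma_{\min}^2(\truU)}{100}\nucnorm{\mathcal E_t}$ term.

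The main term $V_{\truX}^\T\bigl[R_t - \mu(R_t A_t + A_t R_t)\bigr]$ is then handled as in \cite[Lemma 9.5]{stoger2021small}. Using $A_t = S_t + \mathcal E_t$, $V_{\truX}^\T\mathcal E_t = 0$, and $V_{\truX}^\T\truX\,\UUt\Wtperp = 0$, the $\mathcal E_t$‑cross terms collapse to a single contribution of norm at most $2\mu\specnorm{\truX - S_t}\nucnorm{\mathcal E_t}$ (again absorbed), leaving — with $D_t := \truX - S_t$, so $V_{\truX}^\T R_t = V_{\truX}^\T D_t$ — the quantity $V_{\truX}^\T\bigl[D_t - \mu(D_t S_t + S_t D_t)\bigr]$. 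Here $S_t = \UUt W_t(\UUt W_t)^\T \succeq \tfrac{1}{10}\sigma_{\min}^2(\truU)\,V_{\UUt W_t}V_{\UUt W_t}^\T$ and $\mu\|S_t\|\le 9c\cond^{-2}$, so that $Z\mapsto Z - \mu(Z S_t + S_t Z)$ is an operator‑norm contraction with rate $1 - \Theta(\mu\sigma_{\min}^2(\truU))$ on matrices whose row and column spaces lie in $\col(\UUt W_t)$; the part of $D_t$ transverse to $\col(\UUt W_t)$, which is $\truX\,V_{(\UUt W_t)^\perp}$ and has norm $\le \cond^{-2}\specnorm{\truU}^2\cdot O(c) = O(c)\,\sigma_{\min}^2(\truU)$ by the alignment bound, is charged against the contraction margin. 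Collecting the four contributions and choosing the universal constant $c$ small enough yields the stated recursion with factor $1 - \tfrac{\mu}{200}\sigma_{\min}^2(\truU)$.

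I expect the noise to be the routine ingredient: $E$ enters the step only through $N_t$, its leading‑order effect is the single term $\mu(E A_t + A_t E)$, crudely bounded by $18\mu\specnorm{\truU}^2\specnorm{E}$, and the higher‑order $E$‑dependence is swallowed by the contraction margin. The hard part is the coercivity/Lyapunov estimate for $Z\mapsto Z S_t + S_t Z$ that turns the first‑order dynamics into a contraction with the sharp rate $\Theta(\mu\sigma_{\min}^2(\truU))$ — this is where the signal lower bound $\sigma_{\min}(\UUt W_t)\ge \sigma_{\min}(\truU)/\sqrt{10}$, the alignment bound $\bigl\|V_{\truX^\perp}^\T V_{\UUt W_t}\bigr\|\le c\cond^{-2}$, and $\|\UUt\|\le 3\specnorm{\truU}$ must be combined carefully, controlling in particular the second‑order‑in‑$D_t$ correction hidden in $D_t S_t + S_t D_t$ by exploiting the spectral structure of $S_t$ (directions in which $D_t$ is large align with large curvature of $S_t$, hence faster contraction). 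The accompanying technical subtlety, which is the point of departure from \cite{stoger2021small}, is that the right‑hand side of \eqref{ineq:intern773} carries $\specnorm{\truX - S_t} + \nucnorm{\mathcal E_t}$ rather than $\specnorm{\truX - \UUt\UUt^\T}$ (since $\UUt\UUt^\T$ need not be low rank, see the footnote attached to \eqref{ineq:intern773}), which is exactly what forces the column‑space splitting and the bound $\specnorm{\truX - S_t}\le \bigl\|V_{\truX}^\T R_t\bigr\| + 9c\,\sigma_{\min}^2(\truU)$ used above.
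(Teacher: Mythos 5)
Your overall architecture is the same as the paper's: expand one step, split into the signal term $R_t-\mu(R_tA_t+A_tR_t)$ (the paper's term $(I)$ plus its $\mu^2$ correction $(IV)$), the RIP-perturbation and noise terms (the paper's $(II)$, $(III)$, bounded exactly as you do, with the noise contributing $2\mu\specnorm{E}\specnorm{A_t}\le 18\mu\specnorm{\truU}^2\specnorm{E}$), and a quadratic remainder (the paper's $(V)$), using $V_{\truX}^T\UUt\Wtperp=0$ throughout. The gap is in how you convert the ``off-space'' quantities back into $\specnorm{V_{\truX}^T(\truX-\UUt\UUt^T)}$. You use the crude alignment bound to get $\specnorm{V_{\truX^\perp}^T S_t}\le 9c\,\sigma_{\min}^2(\truU)$, hence $\specnorm{\truX-S_t}\le \specnorm{V_{\truX}^T R_t}+9c\,\sigma_{\min}^2(\truU)$, and similarly you bound the part of $D_t$ transverse to $\mathrm{col}(\UUt\WWt)$ by $O(c)\,\sigma_{\min}^2(\truU)$, proposing to ``charge'' these against the contraction margin. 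That absorption fails: the margin you have available is proportional to $\mu\sigma_{\min}^2(\truU)\specnorm{V_{\truX}^T R_t}$, and the only other terms permitted in the conclusion are proportional to $\nucnorm{\UUt\Wtperp\Wtperp^T\UUt^T}$ and $\specnorm{E}$. Your additive leftovers are of order $c^2\mu\,\sigma_{\min}^4(\truU)$ (resp.\ $c\,\mu\,\sigma_{\min}^4(\truU)$), independent of all three quantities; in the regime where $\specnorm{V_{\truX}^T R_t}$, $\nucnorm{\UUt\Wtperp\Wtperp^T\UUt^T}$ and $\specnorm{E}$ are all much smaller than $c\,\sigma_{\min}^2(\truU)$ --- precisely the near-convergence regime the lemma must cover, since it drives the geometric decay in Lemma \ref{thm:PhaseIII} --- the recursion you derive is strictly weaker than the stated one.

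What is missing is the paper's Lemma \ref{lemma:technicallemma8}, i.e.\ purely \emph{multiplicative} bounds with no additive slack: using $V_{\truX}^T\UUt\Wtperp=0$ together with the near-isometry of $V_{\truX}^T V_{\UUt\WWt}$ (by Weyl, $\sigma_{\min}(V_{\truX}^T V_{\UUt\WWt})\ge 1-\specnorm{V_{\truX^\perp}^T V_{\UUt\WWt}}\ge 1-c\cond^{-2}$, so it is invertible with norm of the inverse at most $2$), one writes $V_{\truX^\perp}^T V_{\UUt\WWt}=V_{\truX^\perp}^T V_{\UUt\WWt}(V_{\truX}^T V_{\UUt\WWt})^{-1}V_{\truX}^T V_{\UUt\WWt}$ and obtains $\specnorm{V_{\truX^\perp}^T S_t V_{\truX^\perp}}\le 2\specnorm{V_{\truX}^T(\truX-\UUt\UUt^T)}$ and hence $\specnorm{\truX-\UUt\WWt\WWt^T\UUt^T}\le 4\specnorm{V_{\truX}^T(\truX-\UUt\UUt^T)}$. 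Plugged into \eqref{ineq:intern773} and into the second-order terms, every contribution is then proportional to $\specnorm{V_{\truX}^T R_t}$, $\nucnorm{\UUt\Wtperp\Wtperp^T\UUt^T}$ or $\specnorm{E}$ and can be absorbed by taking $c$ small. Your transverse-part issue admits a similar fix that you did not use: since $A_tV_{(\UUt\WWt)^\perp}=\UUt\Wtperp\Wtperp^T\UUt^TV_{(\UUt\WWt)^\perp}$ and $V_{\truX}^T\UUt\Wtperp=0$, one has $V_{\truX}^T\truX\,V_{(\UUt\WWt)^\perp}=V_{\truX}^T(\truX-\UUt\UUt^T)V_{(\UUt\WWt)^\perp}$, so that piece is in fact controlled by $\specnorm{V_{\truX}^T R_t}$ rather than only by the alignment bound. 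So the proposal is fixable and close in spirit to the paper, but as written the central absorption step does not go through.
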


Let us now prove Lemma \ref{thm:PhaseIII}.

\begin{proof}[Proof of Lemma \ref{thm:PhaseIII}]

\noindent \textbf{Case $r>\trur$:}
Set
$\hat{t} := t_1 + t_{\tilde{\Delta}}$, where  
$t_{\tilde{\Delta}}=
\Big\lfloor \frac{300}{\eta \sigma_{\min } \left(\truU\right)^2}  \ln \left(    \cond^{1/4}  \frac{1}{16(\min \left\{ r;n \right\}      -\trur)}  \frac{  \specnorm{\truU}^{7/4}}{\gamma^{7/4}}                 \right)   \Big\rfloor.$
Note that $t_{\tilde{\Delta}} \lesssim t_{\Delta}$ from the range of $\gamma$.
%
Denote $\xi_1 = 1 - \frac{\eta}{400}  \sigma_{\min}^2 \left(  \truU\right) $, $\xi_2 = 1+ 80 \eta   c_2   \sigma_{\min }^2 \left(\truU\right)$, and $\xi_3 =  1 - \frac{\eta \sigma_{\min}^2 \left(  \truU\right)}{200} $.
We first state our induction hypothesis
for $ t_1 \le  t \le \hat{t}  $:
\begin{align}
\sigma_{\min} \left(  U_t W_t\right)
\ge& \sigma_{\min} \left(  V_{\truX}^T U_t\right)   \ge   \frac{\sigma_{\min } \left(\truU\right)}{\sqrt{10}}  ,    \label{ineq:induction5}  \\
\specnorm{U_{t}W_{t,\perp}}
\le& \xi_2^{t- t_1} \specnorm{U_{t_1} W_{t_1,\perp}},\label{ineq:induction6} \\
\specnorm{U_{t}}
\le& 3\specnorm{\truU},  \label{ineq:induction7}\\
\specnorm{ V_{\truX^\perp}^T V_{U_{t}W_{t}}   }
\le& c_2\cond^{-2} , \label{ineq:induction8} \\
\specnorm{V_{\truX}^T  \Dt}
\le&  10 \xi_1^{t- t_1}   \specnorm{\truU}^2
+18\eta\specnorm{\truU}^2\specnorm{E}
\sum_{\tau=t_1+1}^t \xi_3^{\tau- t_1-1} . \label{ineq:induction9}
\end{align}
For $t=t_1$ we note  the inequalities \eqref{ineq:induction5}, \eqref{ineq:induction7}, and \eqref{ineq:induction8} follow from Lemma \ref{lemma: phaseIandII}.\footnote{Note our hypotheses are the same as those in \cite[(62)-(66)]{stoger2021small} with the \emph{critical} exception \eqref{ineq:induction9}. Here, we use the spectral norm rather than the Frobenius norm as in \cite[(67)]{stoger2021small}. If we use Frobenius norm, then according to  \cite[Proof of Theorem 9.6, p. 27 of the supplement]{stoger2021small} and \cite[Lemma 9.5]{stoger2021small}, we need $\fnorm{ \left( \Id- \mathcal{A}^* \mathcal{A} \right) \left(\Dt \right)  }
	 \le  c \cond^{-2}\fnorm{\truX - U_tU_t^\top}$. It appears very difficult (if not impossible) to justify this condition with only $2\trur$-RIP even if the rank of $U_t$ is no more than $\trur$. In the later version \cite{stoger2022small}, the authors of uses a different method in proving \cite[(66)]{stoger2021small}.  
}
The inequality \eqref{ineq:induction6} trivially holds for $t=t_1$. For $t=t_1$, the inequality \eqref{ineq:induction9} holds due to the following derivation:
\[
\specnorm{ V_{\truX}^T   \Dt  } = \specnorm{  V_{\truX}^T  \tDt }
\le \specnorm{\truX} + \specnorm{\tXt}
\le 10  \specnorm{\truU}^2.
\]
The last step is due to $\specnorm{U_{t_1} W_{t_1}} \le \specnorm{U_{t_1} } \le  3\specnorm{\truU} $ by  \eqref{ineq:induction7}. 

Using triangle inequality, the bound for $\specnorm{\left(   \mathcal{A}^* \mathcal{A} -\Id \right) \left(\Dt \right)}$ in \cite[pp. 27 of the supplement]{stoger2021small}, and the assumption on $E$, we see that
$
\specnorm{\left(   \mathcal{A}^* \mathcal{A} -\Id \right) \left(\Dt \right) +E} \leq 40 c_1 \cond^{-2}   \sigma_{\trur} \left(\truU\right)^2.
$
This inequality allows us to use the argument in \cite[Section 9, Proof of Theorem 9.6, Phase III]{stoger2021small} to prove \eqref{ineq:induction5}, \eqref{ineq:induction7}, \eqref{ineq:induction6},
\eqref{ineq:induction8}, for all $t\in [t_1,\hat{t}]$. 
We omit the proof details.
In particular, from \eqref{ineq:induction6}, \eqref{ineq:induction2}, 
 and the range of $\gamma$, we have $\nucnorm{\Xtp}\leq \specnorm{\truU}^2$.

Next, the inequality \eqref{ineq:intern773} in Lemma \ref{lemma:localconvergence} is satisfied due to \eqref{eq: I-AAinequality}. 
Thus we have that
\begin{align*}
\specnorm{V_{\truX}^T \Delta_{t+1} }
\le &  \xi_3\specnorm{  V_{\truX}^T    \Dt   }
+  \eta \frac{\sigma_{\min} \left(\truU\right)^2 }{100}  \nucnorm{ \Xtp  } +18 \eta \specnorm{\truU}^2\specnorm{E} \\
\overset{(a)}{\le} &  10\specnorm{\truU}^2 \xi_3 \xi_1^{t-t_1} 
+   \eta \frac{\sigma_{\min} \left(\truU\right)^2 }{100}  \nucnorm{ \Xtp }+18 \eta \specnorm{\truU}^2\specnorm{E}\sum_{\tau = t_1+1}^{t+1}\xi_3^{\tau -t_1-1},
\end{align*}
where in step (a), we use the induction hypothesis \eqref{ineq:induction9}. Now \eqref{ineq:induction9} holds for $t+1$ if the following holds.
\begin{equation}\label{intern:ineq6789}
\nucnorm{ \Xtp  } \le  \frac{1}{4}  \xi_1^{t-t_1} \specnorm{\truU}^2 .
\end{equation}
Using the relationship between the operator norm and the nuclear norm, we have
\begin{align*}
\nucnorm{  \Xtp  } 
&\le  (  \min \left\{ r;n \right\}    -\trur) \specnorm{  U_{t}W_{t,\perp}}^2 \\
&\overset{(a)}{\le}  4 (\min \left\{ r;n \right\}       -\trur) \big(1+ 
80  \eta c_2  \sigma_{\min } \left(\truU\right)^2 \big)^{2(t-t_1)}    \sigma_{\min } \left(\truU\right)^{1/4} \gamma^{7/4},
\end{align*}
where in step $(a)$, we use \eqref{ineq:induction6} and 
the bound on $\specnorm{U_{t_1}W_{t_1,\perp}}$ from \eqref{ineq:induction2}.
Hence, the inequality \eqref{intern:ineq6789} holds if
$c_2>0$ is small enough and
$16(\min \left\{ r;n \right\}   -\trur)\sigma_{\min } \left(\truU\right)^{1/4} \gamma^{7/4} 
\le    \left(  1 - \frac{\eta}{350}  \sigma_{\min} \left(  \truU\right)^2 \right)^{t- t_1}   \specnorm{\truU}^2.$
This inequality is indeed true so long as $t\leq
\hat{t} = t_1 +  t_{\tilde{\Delta}} $.
The induction step for the case $r>\trur$ is finished. \\

Let us now verify the inequality for
$\fnorm{\Dt}$ for $r>\trur$:
\begin{equation}
\begin{aligned}\label{eq: UU-X}
 \fnorm{\Delta_{\hat{t}}}
\overset{(a)}{\le}&
4 \fnorm{ V_{\truX}^T \Delta_{\hat{t}}} + \nucnorm{  X_{\hat{t},\perp} }
\overset{(b)}{ \lesssim }
\trur^{1/2}\xi_1^{\hat{t}- t_1}   \specnorm{\truU}^2 
+
\trur^{1/2}\eta\specnorm{\truU}^2\specnorm{E}\sum_{\tau = t_1+1}^{\hat t}\xi_3^{\tau -t_1-1}\\
\overset{(c)}{ \lesssim }
&\trur^{1/2}\left( \frac{\cond^{1/4}}{\min \left\{ r;n \right\}    -\trur}  \frac{  \specnorm{\truU}^{7/4}}{\gamma^{7/4}}                 \right)^{-3/4} \hspace{-0.5cm}   \specnorm{\truU}^2       +\trur^{1/2}\specnorm{E}\cond^2
\\
\end{aligned}
\end{equation}
where inequality $(a)$ follows from Lemma \ref{lemma:technicallemma8}. Inequality $(b)$ follows from \eqref{ineq:induction9} and \eqref{intern:ineq6789}. The step $(c)$ is due to the definition of $\hat{t}$.


\textbf{Case $r=\trur$:} We note that for $t=t_1$, we have $\UUt = \UUt W_t W_t^\top$ and $W_{t,\perp} =0$ because $\sigma_{\trur}(U_t W_t)>0$ and $W_t$ is of size $\trur \times \trur$. Following almost the same procedure as before, we can prove the induction hypothesis \eqref{ineq:induction5} to \eqref{ineq:induction9} for any $t\geq t_1$ again with \eqref{ineq:induction6} replaced by $\specnorm{U_{t}W_{t,\perp}} = 0$. Since we can ignore the term $\UUt W_{t,\perp}$ in \eqref{intern:ineq6789}, we have
\eqref{ineq:induction9} for all $t\geq t_1$. Finally, to bound $\fnorm{U_tU_t^T - \truX}$, we can replace $\hat{t}$ by $t$ in \eqref{eq: UU-X}, stop at step $(b)$, and bound $\trur^{1/2}\eta\specnorm{\truU}^2\specnorm{E}\sum_{\tau = t_1+1}^{\hat t}\left(  1 - \frac{\eta}{200}  \sigma_{\min} \left(  \truU\right)^2 \right)^{\tau -t_1-1}$ by $\trur^{1/2}\specnorm{E}\cond^2 $.
\end{proof}

\subsection{Proof of Lemma \ref{lemma:localconvergence}}
\label{sec: proofofLemma:localconvergence}
We start with the following technical lemma.

\begin{lemma}\label{lemma:technicallemma8}
 Under the assumptions of Lemma \ref{lemma:localconvergence}, the following inequalities hold:
	\begin{align}
	\specnorm{ V_{\truX^\perp}^T \Xt} &\le   3 \specnorm{ V_{\truX}^T \Dt} + \specnorm{ \Xtp },\\
	\specnorm{\Dt}
	&\le 4 \specnorm{ V_{\truX}^T \Dt} + \specnorm{  \Xtp }, \\
	\specnorm{\tDt}
	&\le 4 \specnorm{ V_{\truX}^T \Dt} .
	\end{align}
\end{lemma}

\begin{proof}
The first two inequalities are proved in \cite[Lemma B.4]{stoger2021small}. To prove the last inequality, we first decompose $\tDt$ as
\begin{align}
\label{eq: technicallemma8one}
\tDt =  V_{\truX}V_{\truX}^\top \tDt  + V_{\truX^\perp}V_{\truX^\perp}^\top \tDt.
\end{align}
%
For the first term, we have
\begin{align*}
V_{\truX}V_{\truX}^\top \tDt
&\overset{(a)}{=} V_{\truX}V_{\truX}^\top \truX -
V_{\truX}V_{\truX}^\top \UUt W_tW_t^\top \UUt^T \\
& \overset{(b)}{=} V_{\truX}V_{\truX}^\top \truX -
V_{\truX}V_{\truX}^\top \UUt\UUt^T = V_{\truX}V_{\truX}^\top\Dt .
\end{align*}
Here the step $(a)$ is the definition of $\tDt$, and the step $(b)$ is due to \eqref{eq: VUtWt}. Thus, we  have
$ \specnorm{V_{\truX}V_{\truX}^\top \tDt } \leq \specnorm{V_{\truX}^\top \Dt}$. Our task is then to bound the second term of \eqref{eq: technicallemma8one}:
\begin{equation*}
\begin{aligned}
\specnorm{V_{\truX^\perp}V_{\truX^\perp}^\top \tDt}
 \le&
 \specnorm{ V_{\truX^\perp}V_{\truX^\perp}^\top \tDt V_{\truX}}
 +
\specnorm{ V_{\truX^\perp}V_{\truX^\perp}^\top \tDt V_{\truX^\perp}V_{\truX^\perp}^\top}\\
\overset{(a)}{\le} & \specnorm{ \Dt V_{\truX}}
+ \specnorm{ V_{\truX^\perp}\tXt V_{\truX^\perp}}.
\end{aligned}
\end{equation*}
The step $(a)$ is due to \eqref{eq: VUtWt}. For the second term $\specnorm{ V_{\truX^\perp}\tXt V_{\truX^\perp}}$, we have the following estimate:
\begin{equation}
\begin{aligned}\label{eq: intermediateLemma6.2}
	 \specnorm{  V_{\truX^\perp}^T \tXt V_{\truX^\perp}} 
 = &   	\specnorm{  V_{\truX^\perp}^T V_{\tUt} V_{\tUt}^T  \tXt V_{\truX^\perp}} 
	= \specnorm{V_{\truX^\perp}^T  V_{\tUt} \left( V_{\truX}^T V_{\tUt} \right)^{-1}  V_{\truX}^T V_{\tUt}  V_{\tUt}^T \tXt  V_{\truX^\perp}}\\
	\le &\specnorm{V_{\truX^\perp}^T  V_{\tUt}} \specnorm{ \left( V_{\truX}^T V_{\tUt} \right)^{-1} }  \specnorm{ V_{\truX}^T V_{\tUt} V_{\tUt}^T \tXt V_{\truX^\perp} }.
 \end{aligned}
 \end{equation}
 Using \eqref{eq: VUtWt} again, we know
\begin{align*}
 V_{\truX}^T V_{\tUt} V_{\tUt}^T \tXt V_{\truX^\perp} =  V_{\truX}^T \UUt \UUt^T V_{\truX^\perp} 
  =V_{\truX}^T \Dt V_{\truX^\perp}.
 \end{align*}
 We also have
 $
 \frac{ \specnorm{V_{\truX^\perp}^T  V_{\tUt}}}{\sigma_{\min} \left(   V_{\truX}^T V_{\tUt}\right)}  \leq 2
 $ due to
 $\specnorm{V_{\truX^\perp}^\top V_{\tUt}} $ $\leq c\cond^{-2}$. This is true by noting that $V_{\tUt} = V_{\truX}V_{\truX}^\top V_{\tUt} + V_{\truX^\perp}V_{\truX^\perp}^\top V_{\tUt}$ and $\sigma_{r^\star}(V_{\truX}^\top V_{\tUt})=\sigma_{r^\star}(V_{\truX}V_{\truX}^\top V_{\tUt})\overset{(a)}{\geq} \sigma_{r^\star}(V_{\tUt})- \specnorm{V_{\truX^\perp}V_{\truX^\perp}^\top V_{\tUt}}=1-\specnorm{V_{\truX^\perp}^\top V_{\tUt}}$, where the step $(a)$ is due to Weyl's inequality. Thus, the proof is completed by continuing the chain of inequality of \eqref{eq: intermediateLemma6.2}:
$
\specnorm{  V_{\truX^\perp}^T  \tXt V_{\truX^\perp}}
 \leq
	 \frac{ \specnorm{V_{\truX^\perp}^T  V_{\tUt}}}{\sigma_{\min} \left(   V_{\truX}^T V_{\tUt}\right)}  \specnorm{ V_{\truX}^T \Dt V_{\truX^\perp}} 
	\le 2  \specnorm{ V_{\truX}^T \Dt }.
$
\end{proof}
Let us now  prove Lemma \ref{lemma:localconvergence}.
\begin{proof}[Proof of Lemma \ref{lemma:localconvergence}] As in \cite[Proof of Lemma 9.5]{stoger2021small},  we can decompose $\Delta_{t+1} = \truX  - \Xtplus$ into five terms by using the formula
 $ \UUtplus= \UUt +   \eta \left[  \left( \mathcal{A}^* \mathcal{A}  \right)        \left(\Dt \right) +E \right] \UUt
$ and $\Xtplus = \UUtplus \UUtplus^\top$:
 \begin{align*}
\Delta_{t+1}
&=  \bracing{=Q_1}{\left( I - \eta \Xt  \right) \left( \Dt  \right) \left( I -   \eta \Xt  \right)} 
	+ \eta  \bracing{=Q_2} {\left[   \left( \Id- \mathcal{A}^* \mathcal{A} \right)(\Dt) +E \right] X_t} \\
	& +  \eta  \bracing{=Q_3} { \Xt \left[ \left( \Id- \mathcal{A}^* \mathcal{A} \right) \left(\Dt \right) +E \right] }
 - \eta^2\bracing{=Q_4} {   \Xt \Dt \Xt   }\\
	&  - \eta^2  \bracing{ = Q_5}{  \left[  \left( \mathcal{A}^* \mathcal{A}  \right)        \left(\Dt \right) +E \right]  \Xt   \left[  \left( \mathcal{A}^* \mathcal{A}  \right)        \left(\Dt \right) +E \right]   }.
	\end{align*}
Here, $I$ is the identity matrix. We shall bound $V_{\truX}^TQ_i$, $i=1,\dots, 5$. Their bounds below and that $\eta\leq c\kappa_f^{-2}\specnorm{\truU}^{-2}$ give the result. 
	
\noindent \textbf{Bounding $V_{\truX}^TQ_1$ and  $V_{\truX}^TQ_4$:} Because these two terms do not involve the noise matrix $E$, we may recycle the proof of 
\cite[Lemma 9.5, Estimation of (I) and (IV)]{stoger2021small} and conclude that 
	\begin{align*}
	\specnorm{ V_{\truX}^T Q_1} \le 
(1- \frac{\eta}{40}\sigma_{\min}^2(\truU)) 
 \specnorm{V_{\truX}^\top \Delta _t} + 
 \eta \frac{\sigma_{\min}^2(\truU)}{400}
 \specnorm{{X}_{t,\perp}}
	\end{align*}
 and 
 \begin{align*}
	 \eta^2 \specnorm{ V_{\truX}^\top  \Xt \left(\Dt \right)  \Xt^\top   }
	\le & \frac{\eta}{1000} \sigma_{\min}^2 \left(  \truU \right)  \left(5\specnorm{ V_{\truX}^T \Dt}
 +   \specnorm{ \Xtp} \right).
	\end{align*}
\noindent \textbf{Bounding $V_{\truX}^\top Q_2$:} From triangle inequality and submultiplicity of $\specnorm{\cdot}$, we have
	\begin{align*}
	  &\specnorm{ V_{\truX}^T \left[   \left( \Id- \mathcal{A}^* \mathcal{A} \right) \left(\Dt \right)+E \right] \UUt \UUt^T   } \\
	 \le & \left(\specnorm{   \left( \Id- \mathcal{A}^* \mathcal{A} \right) \left(\Dt \right) }+\specnorm{V^\top_X E}\right) \specnorm{\UUt}^2\\
	\overset{(a)}{\le}  & 3^2\left(\specnorm{   \left( \Id- \mathcal{A}^* \mathcal{A} \right) \left(\Dt \right) } +\specnorm{V^\top_X E}\right) \specnorm{\truU}^2\\
	\overset{(b)}{\le}  & 3^3c \sigma_{\min}^2 \left(\truU\right)\left( \specnorm{\truX -\UUt W_t W_t^\top \UUt^T } + \nucnorm{U_t \Wtperp \Wtperp^\top \UUt}\right) \\
 &+9\specnorm{V_{\truX}^\top E}\specnorm{\truU}^2\\
	\overset{(c)}{\le}  & 4\cdot 3^3c \sigma_{\min}^2 \left(\truU\right)  \left(    \specnorm{ V_{\truX}^T \left(  \Dt   \right)} + \nucnorm{  \UUt \Wtperp\Wtperp^T \UUt  }     \right)\\
 &+3^2\specnorm{V_{\truX}^\top E}\specnorm{\truU}^2.
	\end{align*}
	In the step $(a)$, we use the assumption $ \specnorm{U} \le 3 \specnorm{\truU}$, and in the step $(b)$,  we use assumption \eqref{ineq:intern773}. In the step $(c)$, we use Lemma \ref{lemma:technicallemma8}. By taking a small constant $c>0$, we have
	\begin{align*}
	 & \specnorm{ V_{\truX}^T \left[   \left( \Id- \mathcal{A}^* \mathcal{A} \right) \left(\truX -U\UUt^T \right) \right] \UUt \UUt^T   } \\
	\le &  \frac{1}{1000} \sigma_{\min}^2 \left(\truU\right) \left(    \specnorm{ V_{\truX}^T \Dt} + \nucnorm{  \Xtp}      \right)
 + 3^2\specnorm{V_{\truX}^\top E}\specnorm{\truU}^2.
	\end{align*}
\noindent \textbf{Bounding $V_{\truX} Q_3$:} We derive the following inequality similarly as bounding $V_{\truX} Q_3$:
	\begin{align*}
	&\specnorm{ V_{\truX}^T \UUt \UUt^T   \left[   \left( \Id- \mathcal{A}^* \mathcal{A} \right) \left(\Dt \right) +E\right] } \\
	\le &  \frac{1}{1000} \sigma_{\min}^2 \left(\truU\right) \left(    \specnorm{ V_{\truX}^T \Dt} + \nucnorm{ \Xtp  }      \right)
 + 3^2\specnorm{E}\specnorm{\truU}^2.
	\end{align*}
	\noindent \textbf{Bounding $V_{\truX}Q_5$:} First, we have the following bound:
	\begin{align}
	\specnorm{  \left( \mathcal{A}^* \mathcal{A}  \right)        \left(\Dt \right)    }
	&\le  \specnorm{\Dt }    + \specnorm{\left[  \left( \Id - \mathcal{A}^* \mathcal{A}  \right)        \left(\Dt \right)  \right]    } \nonumber \\
	& \le 2 \left(    \specnorm{\tDt } + \nucnorm{\Xtp}\right), \label{eq: Lm6Q5e1}
 \\
	&\le 2 \left(  2\specnorm{\truU}^2 + \specnorm{\UUt}^2 \right). \label{eq: Lm6Q5e2}
	\end{align}
	In the step \eqref{eq: Lm6Q5e1}, we use the assumption \eqref{ineq:intern773}. In the step \eqref{eq: Lm6Q5e2}, we use the assumptions \eqref{ineq:intern773} and $\nucnorm{\Xtp}\leq \specnorm{\truU}^2$. We can bound the term $V_{\truX}Q_5$ as follows:
	\begin{align*}
	&\specnorm{ V_{\truX}^T  \left[  \left( \mathcal{A}^* \mathcal{A}  \right)        \left(\Dt \right) +E \right]   \UUt \UUt^T   \left[  \left( \mathcal{A}^* \mathcal{A}  \right)        \left(\Dt \right) +E \right] }\\
	\le  &  \left(\specnorm{\left[  \left( \mathcal{A}^* \mathcal{A}  \right)        \left(\Dt \right)  \right]    } +\specnorm{E} \right) \specnorm{\UUt}^2 \left( \specnorm{  \left( \mathcal{A}^* \mathcal{A}  \right)        \left(\Dt \right)      } +\specnorm{E}\right) \\
	\overset{(a)}{\le}  & 4\left(    \specnorm{\tDt } + \nucnorm{\Xtp} + \specnorm{E}\right)\specnorm{\UUt}^2\left(  2\specnorm{\truU}^2 + \specnorm{\UUt}^2 +\specnorm{E}\right) \\
	\overset{(b)}{\le} & 6^4\left(    \specnorm{\tDt } + \nucnorm{\tXt} +\specnorm{E}\right) \specnorm{\truU}^4  \\
	\overset{(c)}{\le} &4\cdot 6^4 \left(  \specnorm{ V_{\truX}^T \Dt}    + \nucnorm{  \Xtp} +\specnorm{E}\right)  \specnorm{\truU}^4.
	\end{align*}
	Here, in the step $(a)$, we used the \eqref{eq: Lm6Q5e1} and \eqref{eq: Lm6Q5e2} .  The step $(b)$ is due to the assumption $\specnorm{\UUt } \le 3\specnorm{\truU} $ and $\specnorm{E}\leq \specnorm{\truU}^2$. In the step $(c)$, we used Lemma \ref{lemma:technicallemma8}. Thus, we have
	\begin{align*}
\eta^2 	\specnorm{ V_{\truX}^T  Q_5}	
	 \overset{(a)}{\le}  &  \frac{ \eta\sigma_{\min}^2 \left(  X  \right)}{1000}  \left( \specnorm{ V_{\truX}^T \Dt } 
 +  2.5 \nucnorm{  \Xtp }+2.5  \specnorm{E}]\right),
	\end{align*}
	where the step $(a)$ is due to  the assumption on the stepsize $ \eta \le c \cond^{-2} \specnorm{\truU}^{-2} $.\\
%
\end{proof}

\section{Proof in Section \ref{sec: EarlyStoping}}\label{sec: validation_proof}
We start with the proof of Lemma \ref{lem:valid-RIP}.

\begin{proof}[Proof of Lemma \ref{lem:valid-RIP}]
For any $D\in \R^{n\times n}$, $\innerprod{A_i}{D} + e_i$ is a zero-mean sub-Gaussian random variable with variance $\fnorm{D}^2+ \sigma^2$ and sub-Gaussian norm
\[
\norm{\innerprod{A_i}{D} + e_i}{\psi_2}^2 \le C\bracket{c_1\fnorm{D}^2 + c_2\sigma^2} \le C_1 \bracket{{\fnorm{D}^2} + \sigma^2},
\]
where $C$ is an absolute constant and $C_1\ge 0$ is a constant depending on $c_1$ and $c_2$. The above inequality implies that $(\innerprod{A_i}{D} + e_i)^2$ is sub-exponential with a sub-exponential norm the same as $\norm{\innerprod{A_i}{D} + e_i}{\psi_2}^2$ \cite[Theorem 2.7.6]{vershynin2018high}.
For each $s\ge 0$ and $t=1,\dots,T$, define the event $S_t(s)$ to 
be 
\[
\left \{|  \frac{1}{m_\val}\norm{\calA_\val(D_t) + e}{F}^2 - (\norm{D_t}{F}^2 + \sigma^2)|  
 \ge s C_1(\fnorm{D}^2 + \sigma^2) 
 \right \}
\]
Using \cite[Theorem 2.8.1]{vershynin2018high}, we have $
 \mathbb{P}
\left(S_t(s)\right)
\le  2 \exp\left(-C_2 \min\left( m_\val s^2, m_\val s \right)\right),
$
where $C_2>0$ is an absolute constant. Taking $s = \delta_\val/C_1$ and $m_\val = O(\frac{\log^2 T}{\delta_\val^2})$, the union bound for $S_1(s),\dots, S_T(s)$ implies that 
\[
\P{\cup_{t=1}^T S_t(s)}
\le 2T \exp\parans{-C_2 m_\val \delta_\val^2},
\]
where we absorb $C_1$ into $C_2$ in the last equation.
\end{proof}

Next, we show the proof of Lemma \ref{lemma: validation1}.
\begin{proof}[Proof of Lemma \ref{lemma: validation1}]
Since $\wh t = \argmin_{1\le t \le T} \twonorm{\Amap(D_t) + e}$, we have
\begin{align*}
&\fnorm{D_{\wh t}}^2 + \sigma^2  \le \frac{1}{1-\delta_\val} \norm{\calA_\val(D_{\wh t}) + e}{F}^2 \nonumber\\
&\le  \frac{1}{1-\delta_\val} \norm{\calA_\val(D_{\wt t}) + e}{F}^2\le  \frac{1+\delta_\val}{1-\delta_\val} (\norm{D_{\wt t}}{F}^2 + \sigma^2),
\end{align*}
which further implies that
\[ 
\fnorm{D_{\wh t}}^2  \le \frac{1+\delta_\val}{1-\delta_\val} \fnorm{D_{\wt t} }^2 + \frac{2\delta_\val}{1-\delta_\val} \sigma^2.
\]
\end{proof}

\section{Additional Experiments on DIP}
\label{sec: DIP_appendix}
In \Cref{subsec:dip}, we present the validity of our hold-out method for determining the denoised image through the training progress across different types of noise. In this section, we further conduct extensive experiments to investigate and verify the universal effectiveness. In \Cref{subsec:dip_opt}, we demonstrate that validity of our method is not limited on Adam by the experiments on SGD. In \Cref{subsec:dip_loss}, we demonstrate that validity of our method also holds for L1 loss function. In \Cref{subsec:dip_nlevel}, we demonstrate that validity of our method also exists across a wide range of noise degree for both gaussian noise and salt and pepper noise. 
\subsection{Validty across different optimizers}\label{subsec:dip_opt} In this part, we verify the success of our method exists across different types of optimizers for dfferent noise. We use Adam with learning rate $0.05$ (at the top row) and SGD with learning rate $5$ (at the bottom row) to train the network for recovering the noise images under L2 loss for 20000 iterations, where we evaluate the validation loss between generated images and corrupted images , and the PSNR between generated image and clean images every 200 iterations. In \Cref{fig:DIP_opt_gs}, we show that both Adam and SGD work for Gaussian noise, where we use the Gaussian noise with mean 0 and variance 0.2. At the top row, we plot the results of recovering images optimized by Adam. The PSNR of chosen recovered image according to the validation loss is $20.7518$ at the second column, which is comparable to the Best PSNR $20.8550$ through the training progress at the third column. The best PSNR of full noisy image is $21.0714$ through the training progress at the last column. At the bottom row, we show the results of SGD, The PSNR of decided image according to the validation loss is $20.6964$ at the second column, which is comparable to the Best PSNR $20.7551$ through the training progress at the third column. The best PSNR of full noisy image is $20.9449$ through the training progress at the last column. In \Cref{fig:DIP_opt_sp}, we show that both Adam and SGD works for salt and pepper noise, where $10\%$ percent of pixels are randomly corrupted. At the top row, we plot the results of recovering images optimized by Adam. The PSNR of chosen recovered image according to the validation loss is $20.8008$ at the second column, which is comparable to the Best PSNR $20.9488$ through the training progress at the third column. The best PSNR of full noisy image is $21.0576$ through the training progress at the last column. At the bottom row, we show the results of SGD, The PSNR of decided image according to the validation loss is $20.7496$ at the second column, which is comparable to the Best PSNR $20.8691$ through the training progress at the third column. The best PSNR of full noisy image is $21.0461$ through the training progress at the last column. Comparing these results, the SGD optimization algorithm usually takes more iterations to recovery the noisy images corrupted by either Gaussian noise and salt and pepper noise, therefore we will use Adam in the following parts.
\begin{figure}[t]
    \centering
    \subfloat[Noisy Image]{\includegraphics[width=0.23\textwidth]{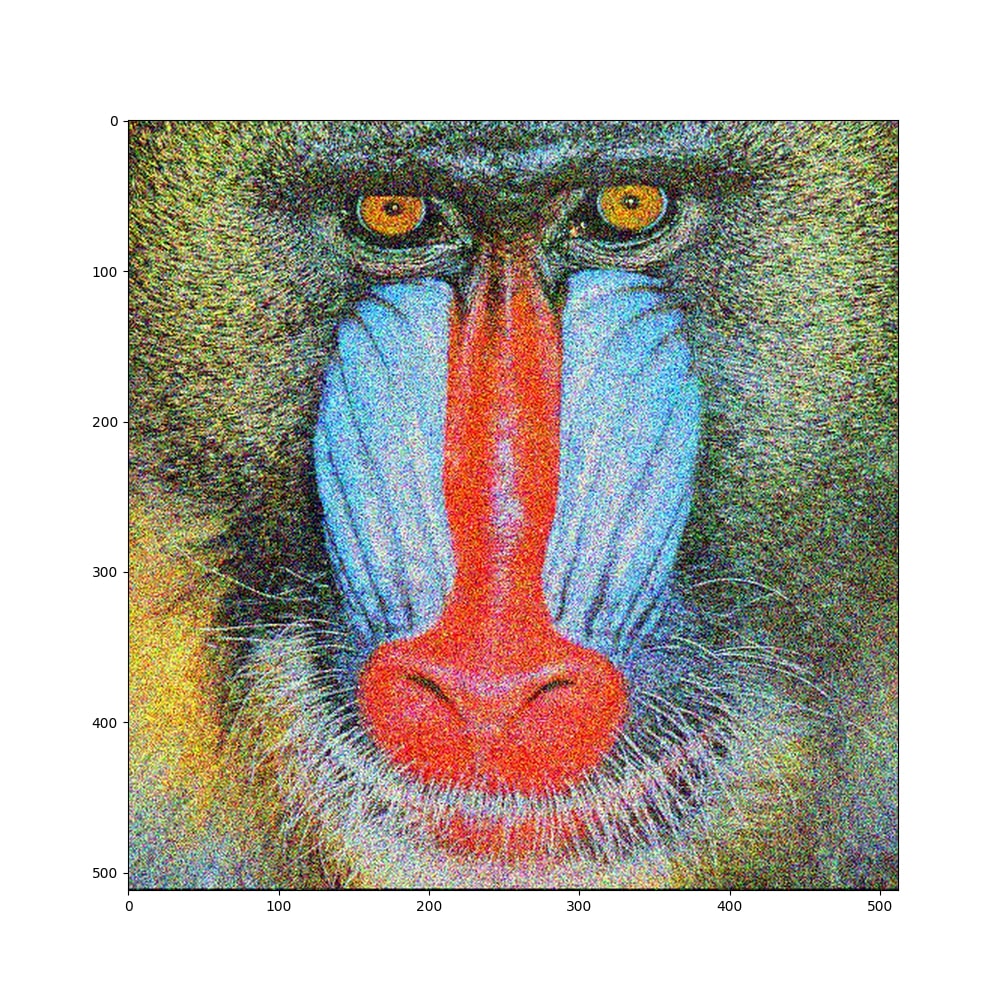}} \
    \subfloat[Best Val Loss (20.8) ]{\includegraphics[width=0.23\textwidth]{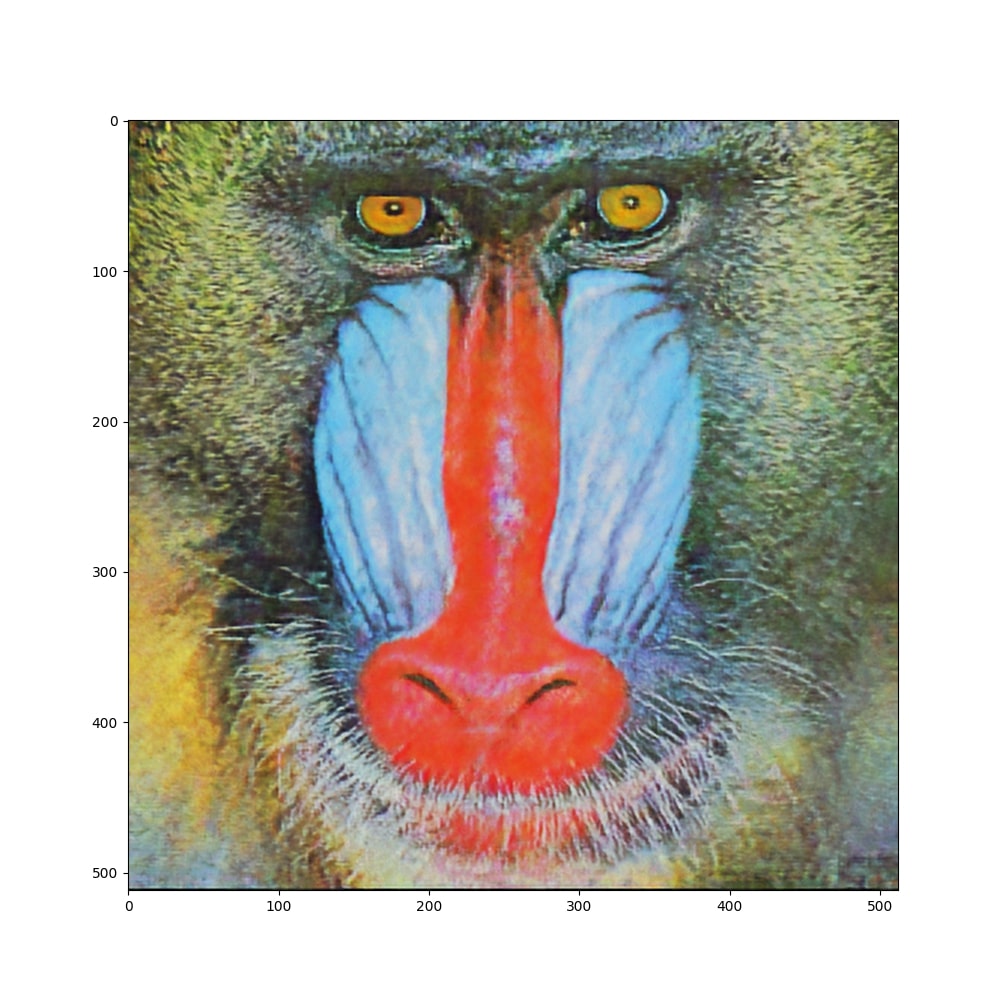}} \
    \subfloat[Best PSNR (20.9)]{\includegraphics[width=0.23\textwidth]{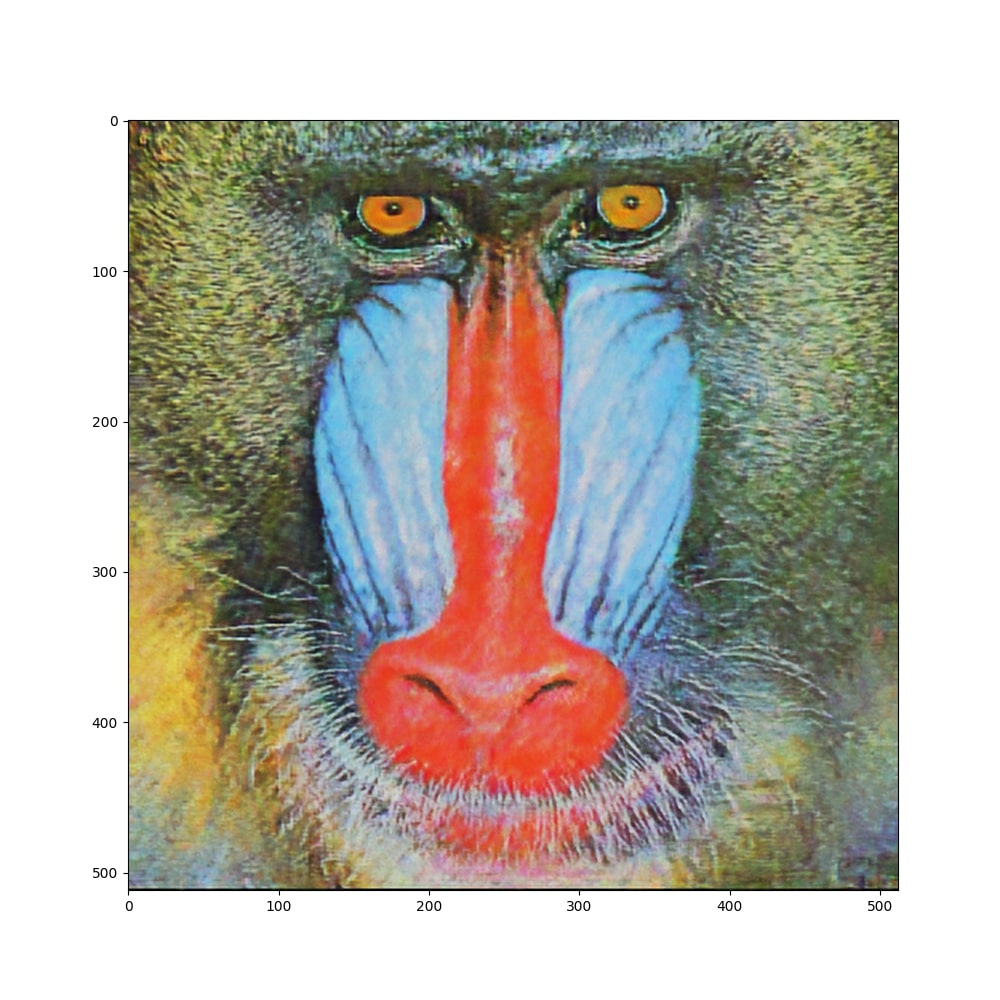}} \
    \subfloat[Progress]{\includegraphics[width=0.23\textwidth]{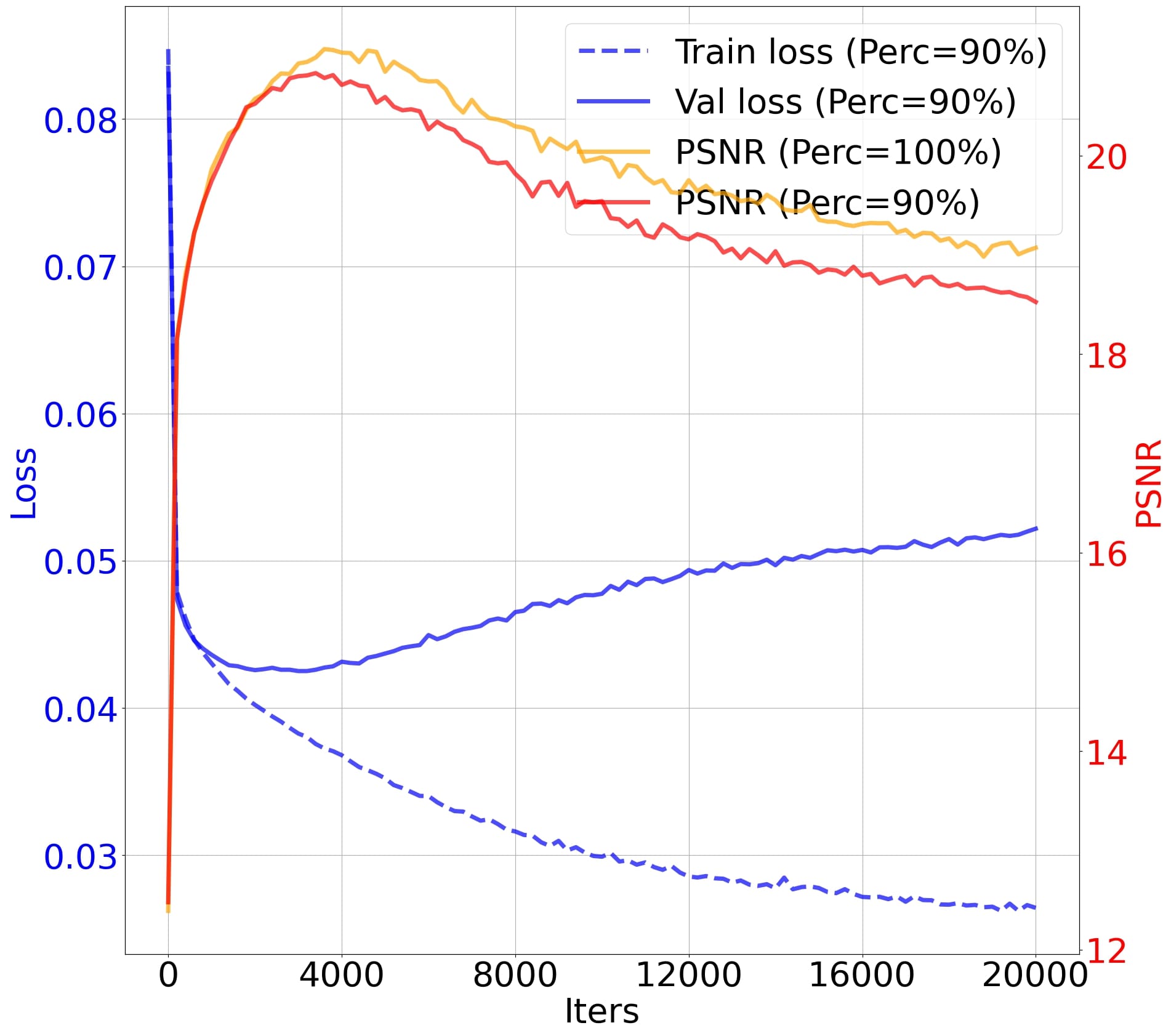}}\\
    
    \subfloat[Noisy Image]{\includegraphics[width=0.23\textwidth]{figs/dip/optimizer/gs_adam/Baboon_noise.jpg}} \
    \subfloat[Best Val Loss (20.7) ]{\includegraphics[width=0.23\textwidth]{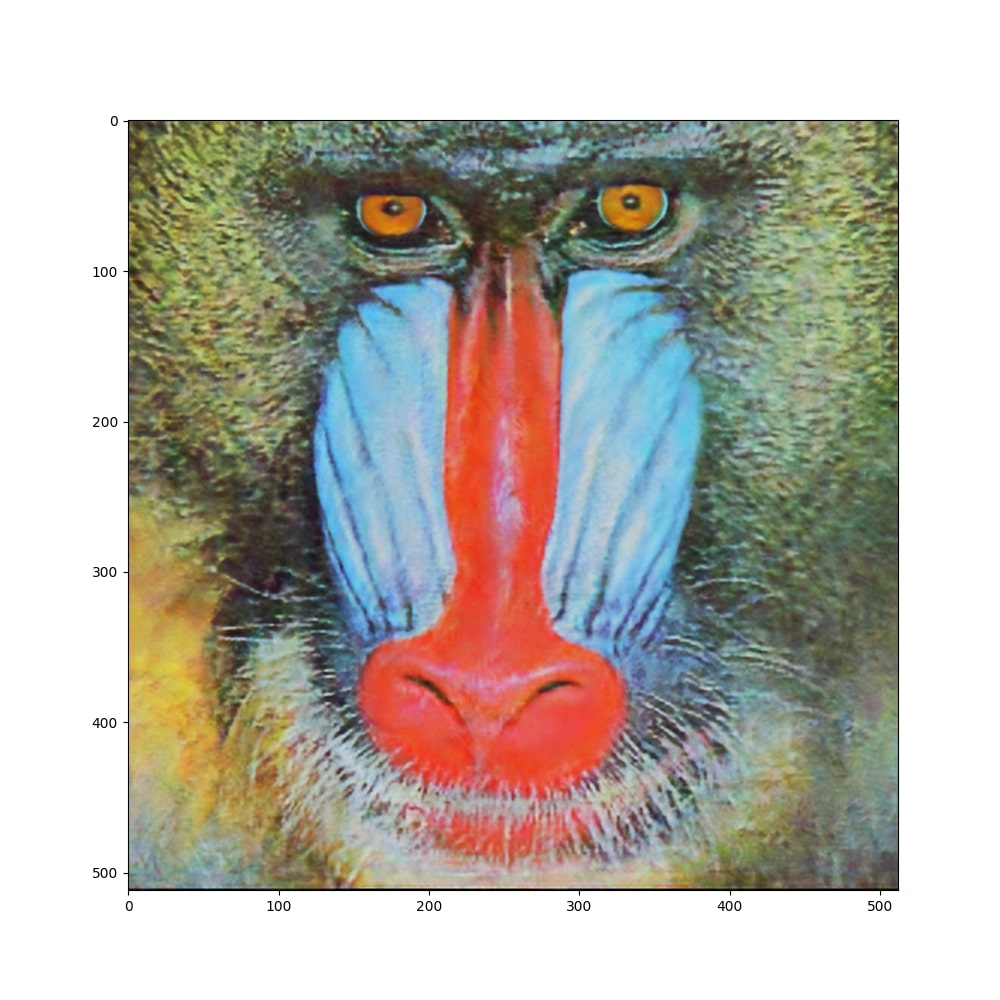}} \
    \subfloat[Best PSNR  (20.8)]{\includegraphics[width=0.23\textwidth]{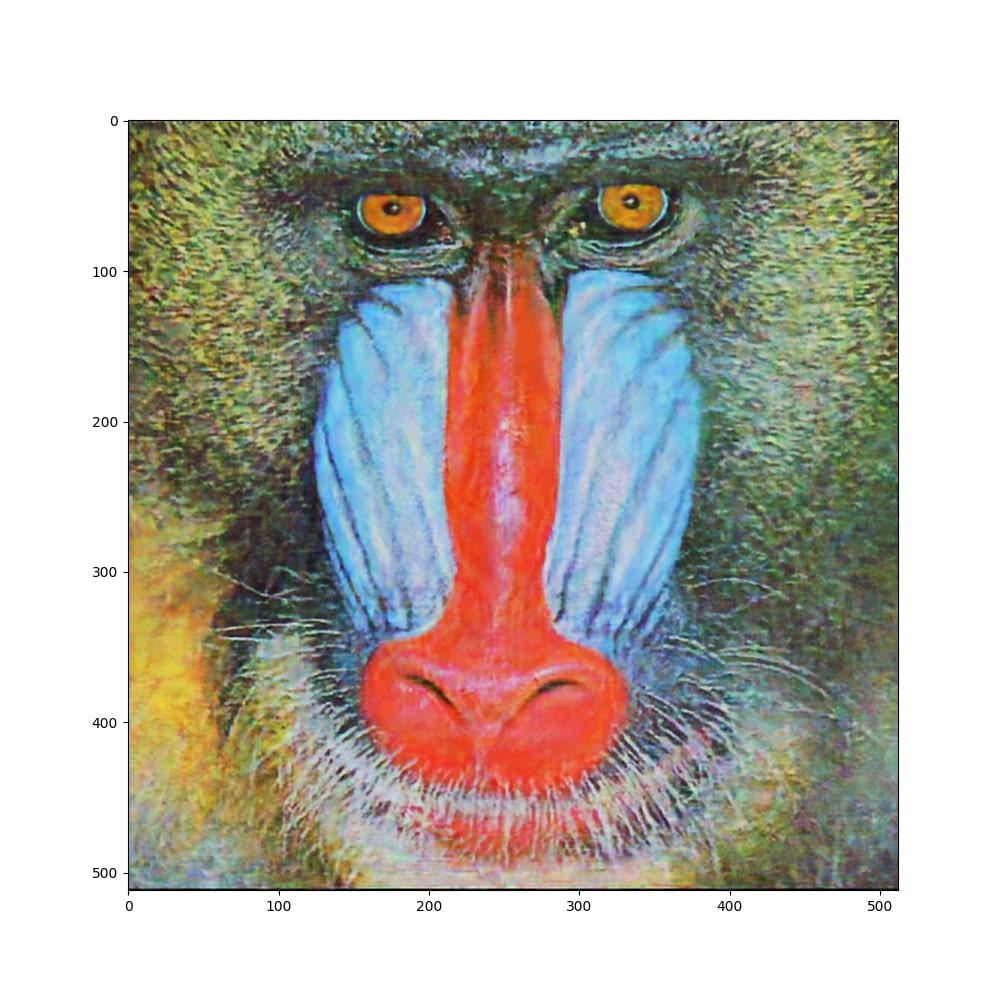}} \
    \subfloat[Progress]{\includegraphics[width=0.23\textwidth]{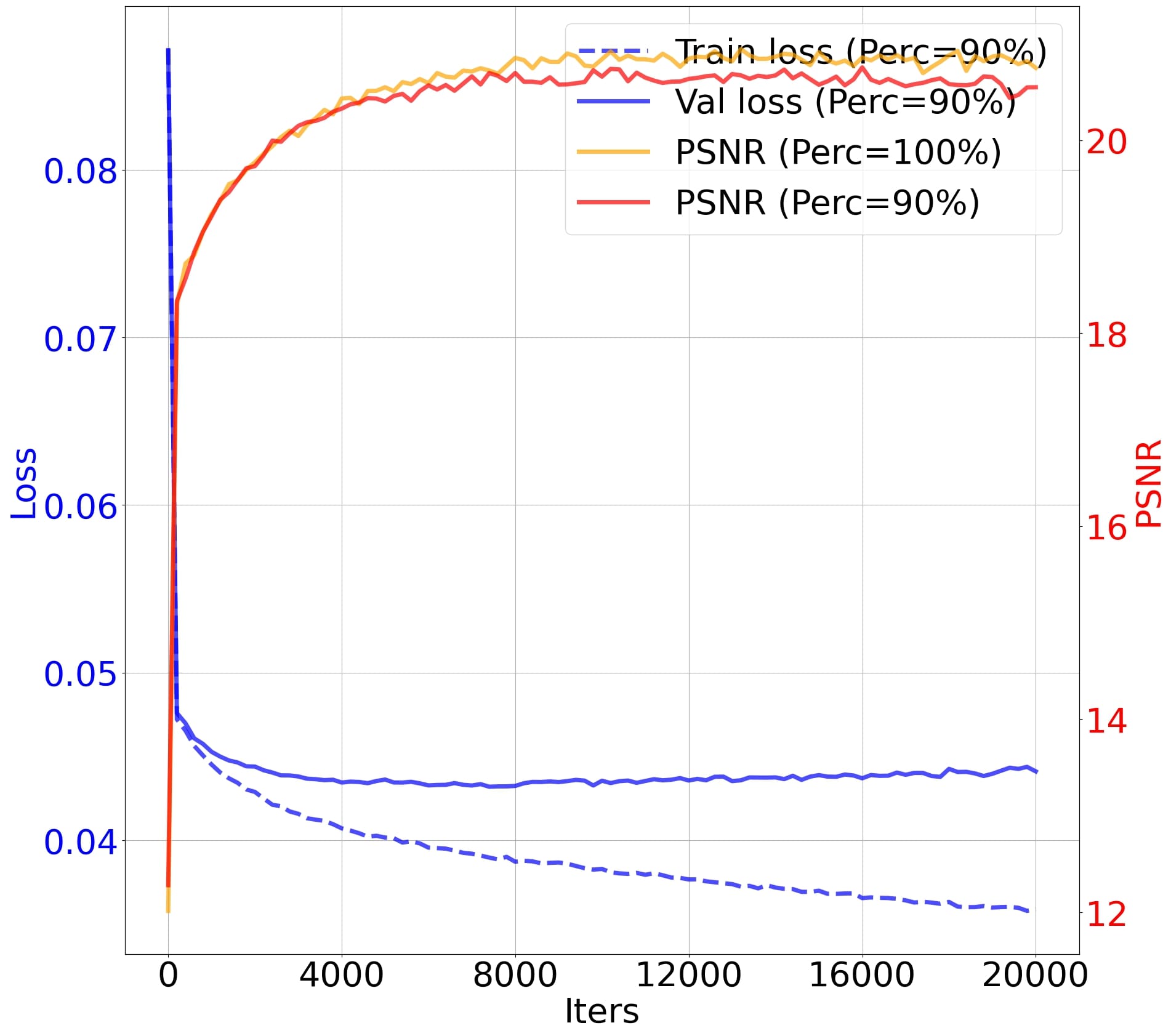}}
    \caption{\textbf{Results across different optimizer for guassian noise}. The top row plots the results for Adam, and the bottom row plots the results for SGD. 
    }
    \label{fig:DIP_opt_gs}
\end{figure}

\begin{figure}[t]
    \centering
    \subfloat[Noisy Image]{\includegraphics[width=0.23\textwidth]{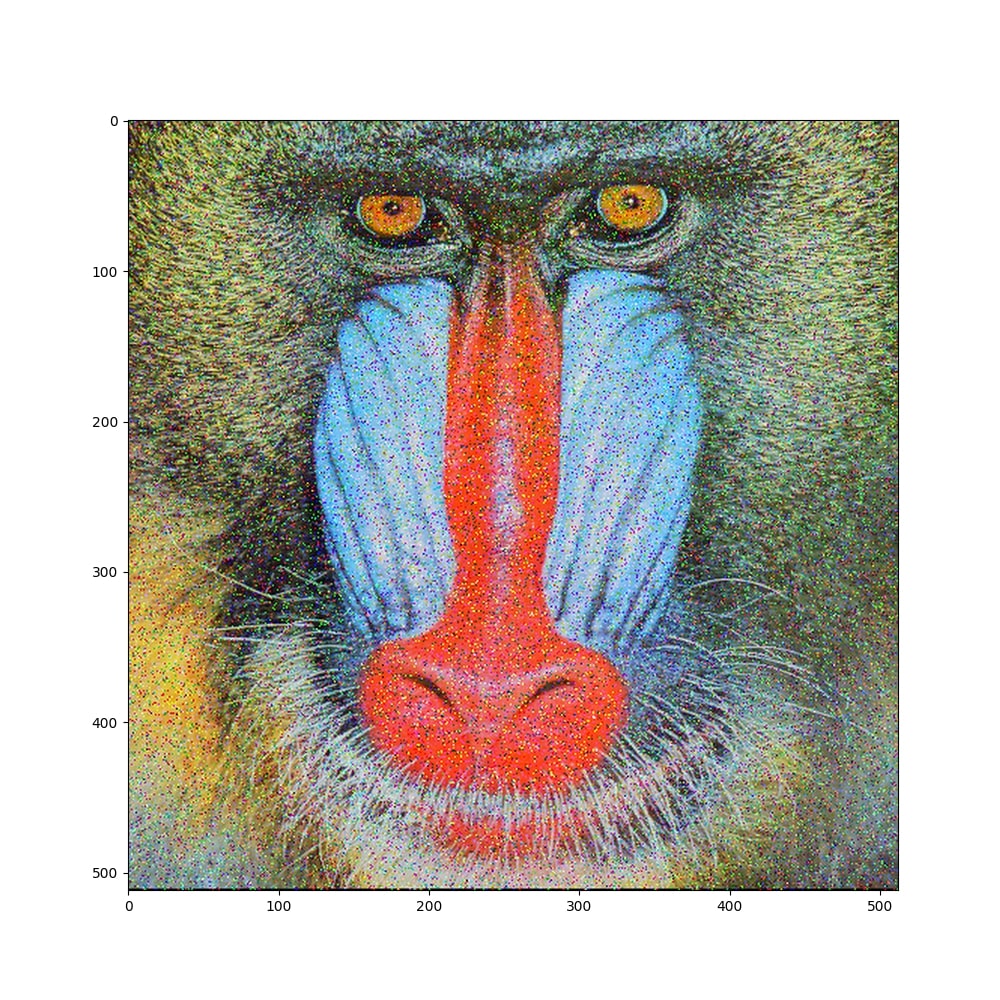}} \
    \subfloat[Best Val Loss (20.8) ]{\includegraphics[width=0.23\textwidth]{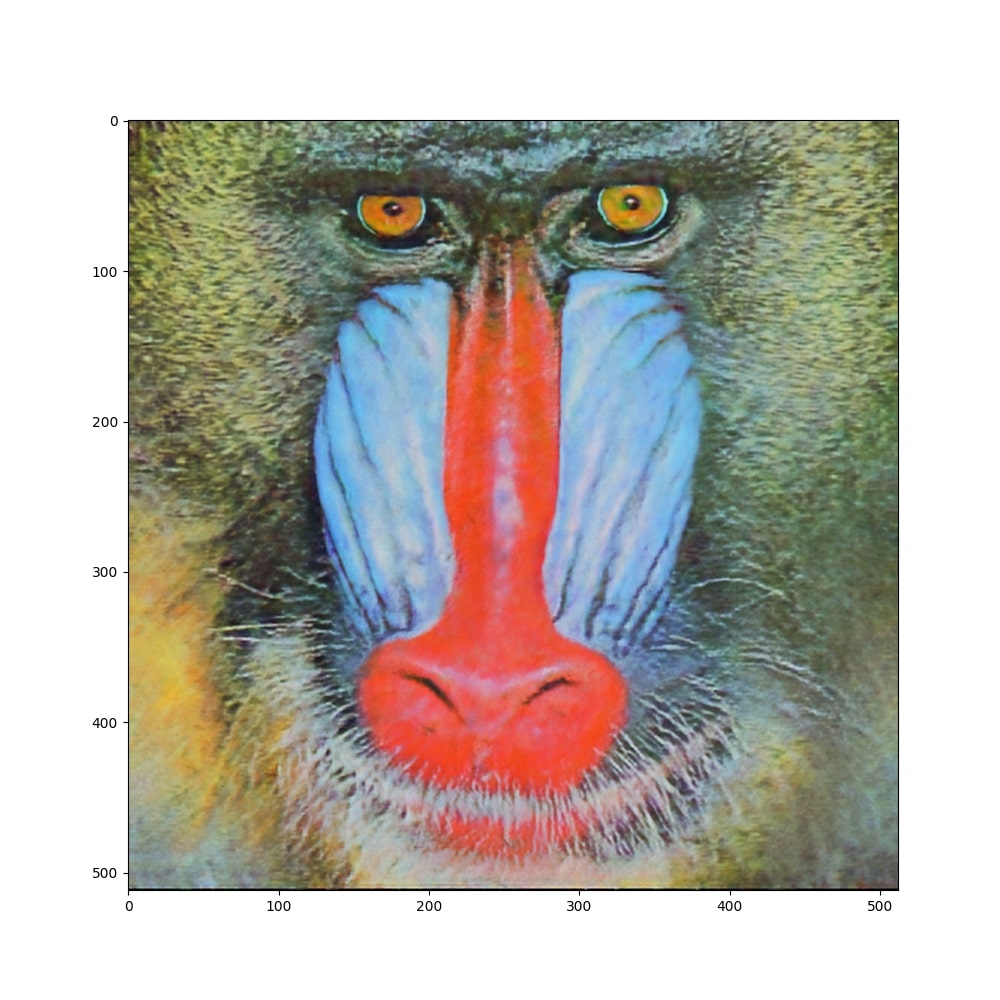}} \
    \subfloat[Best PSNR (20.9)]{\includegraphics[width=0.23\textwidth]{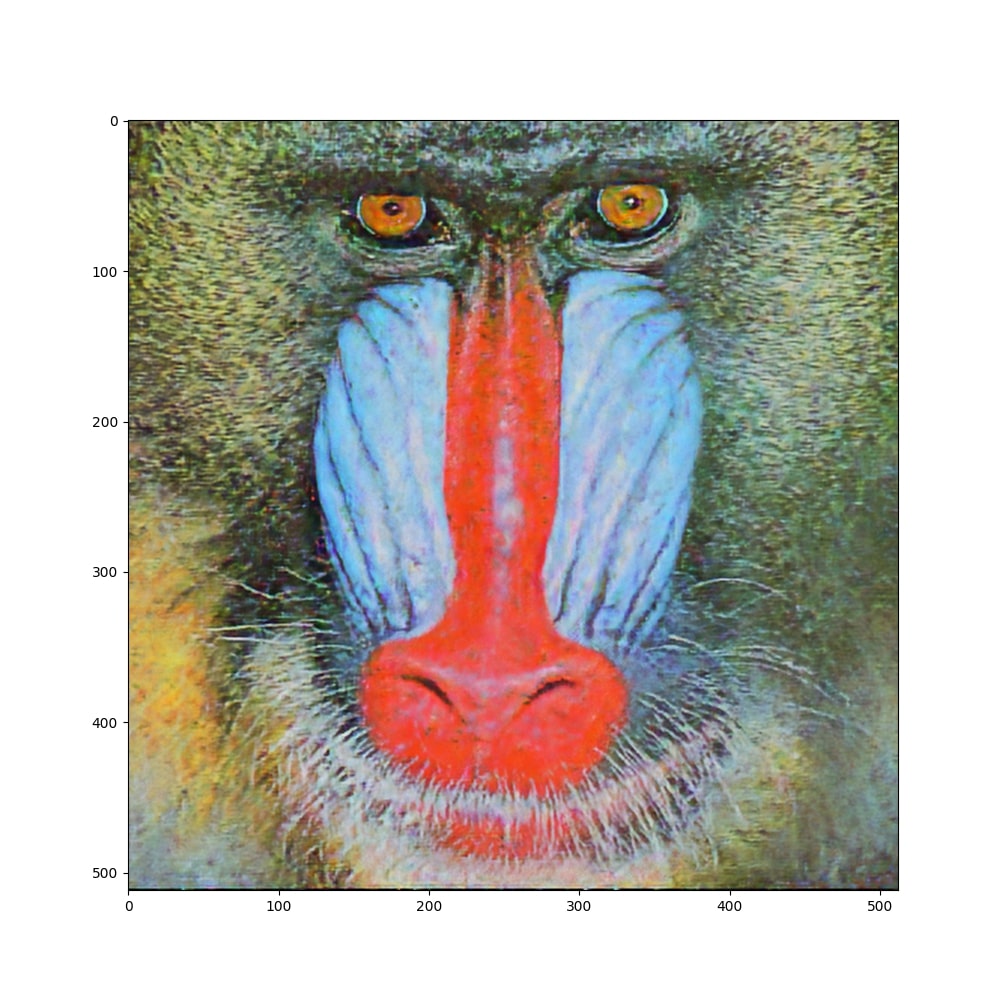}} \
    \subfloat[Progress]{\includegraphics[width=0.23\textwidth]{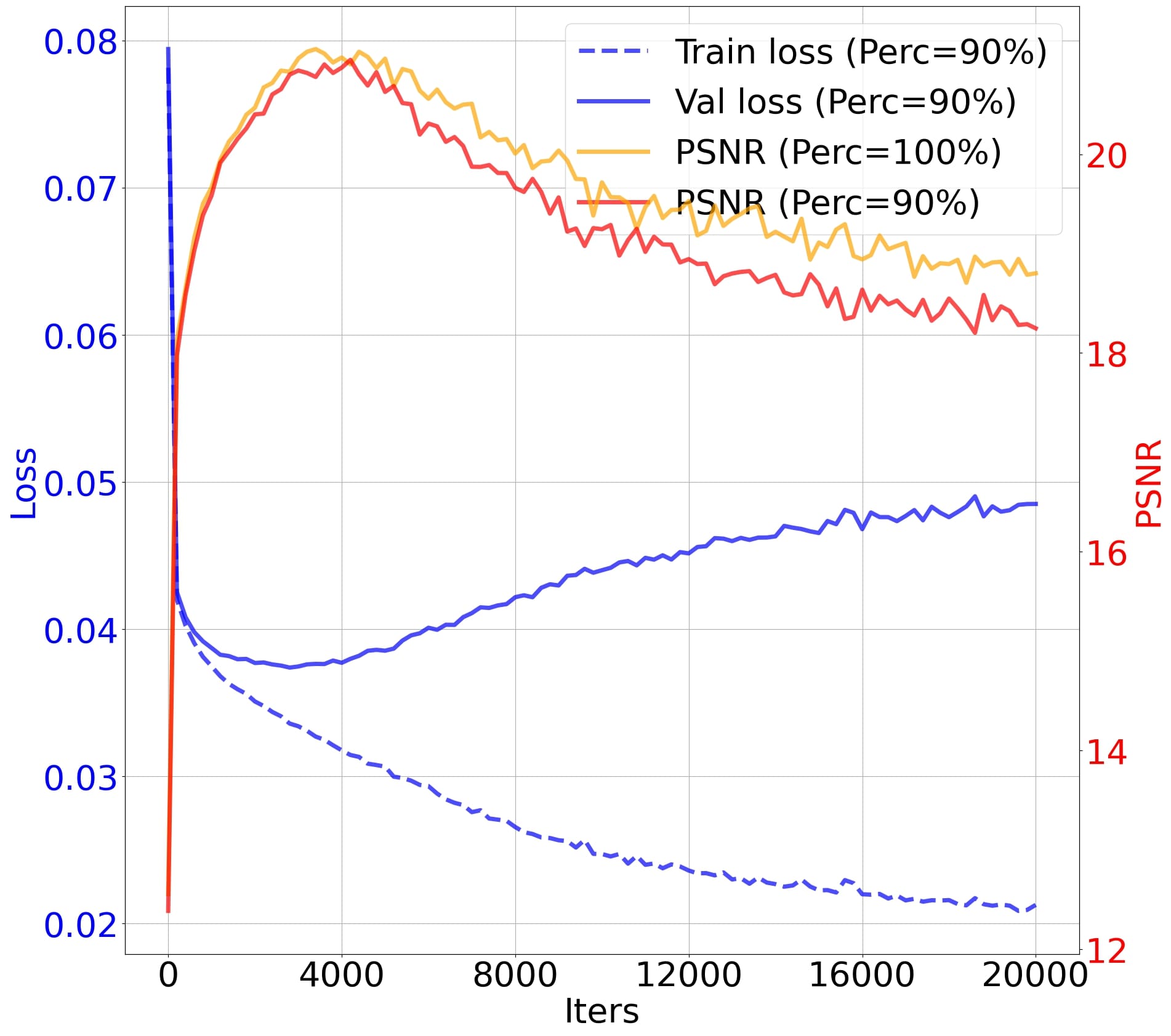}}\\
    
    \subfloat[Noisy Image]{\includegraphics[width=0.23\textwidth]{figs/dip/optimizer/sp_adam/Baboon_noise.jpg}} \
    \subfloat[Best Val Loss (20.7) ]{\includegraphics[width=0.23\textwidth]{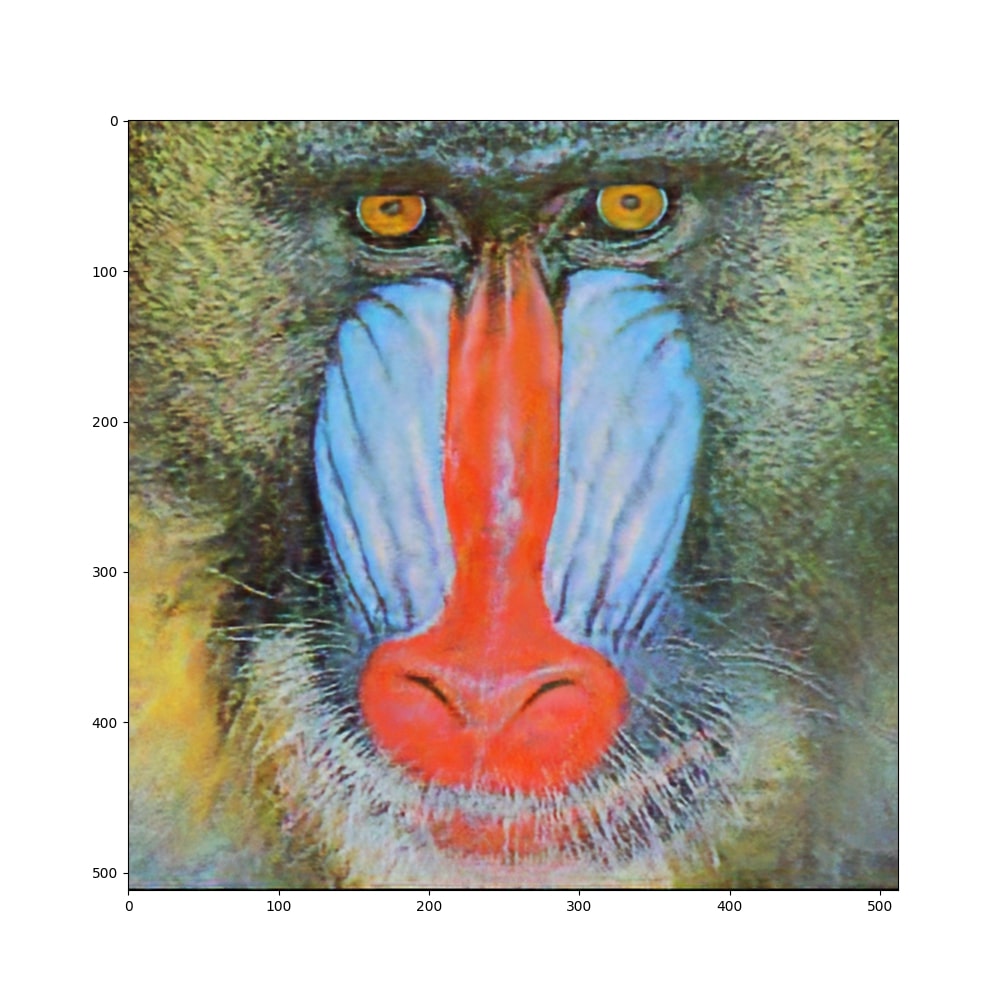}} \
    \subfloat[Best PSNR (20.9)]{\includegraphics[width=0.23\textwidth]{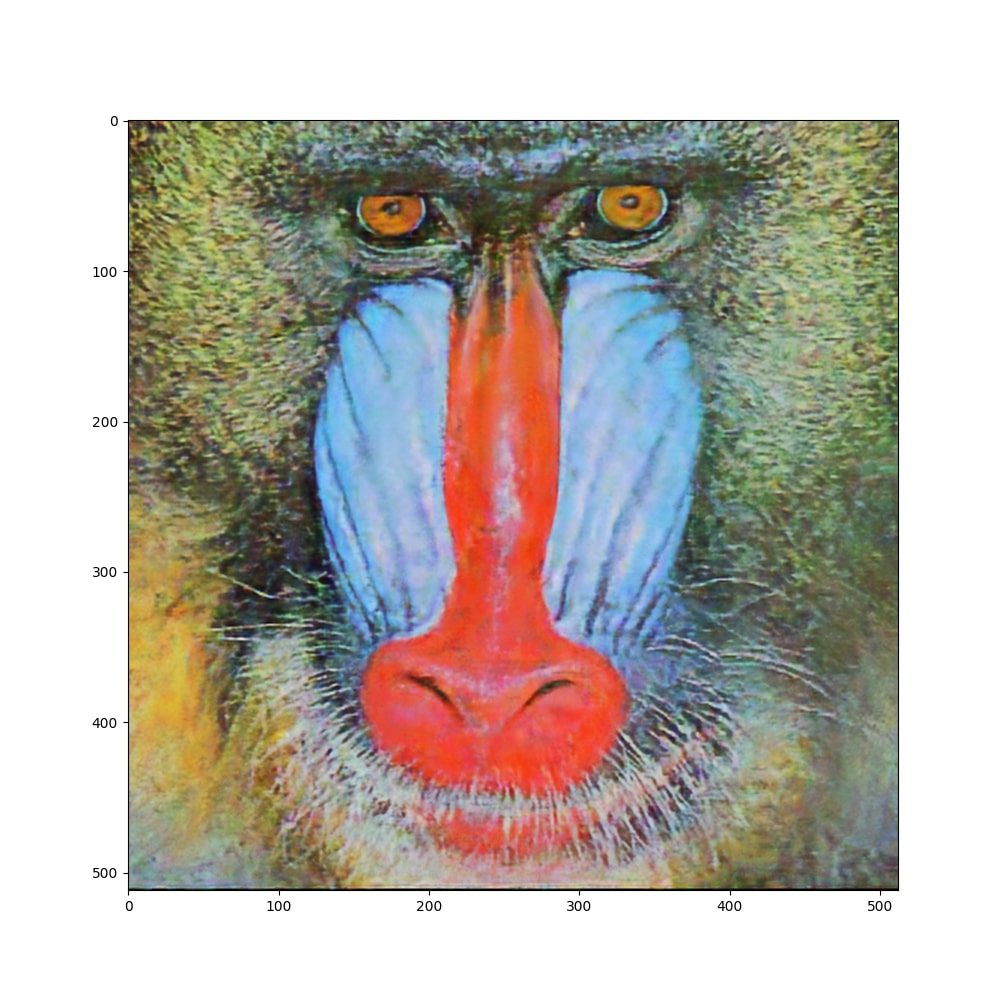}} \
    \subfloat[Progress]{\includegraphics[width=0.23\textwidth]{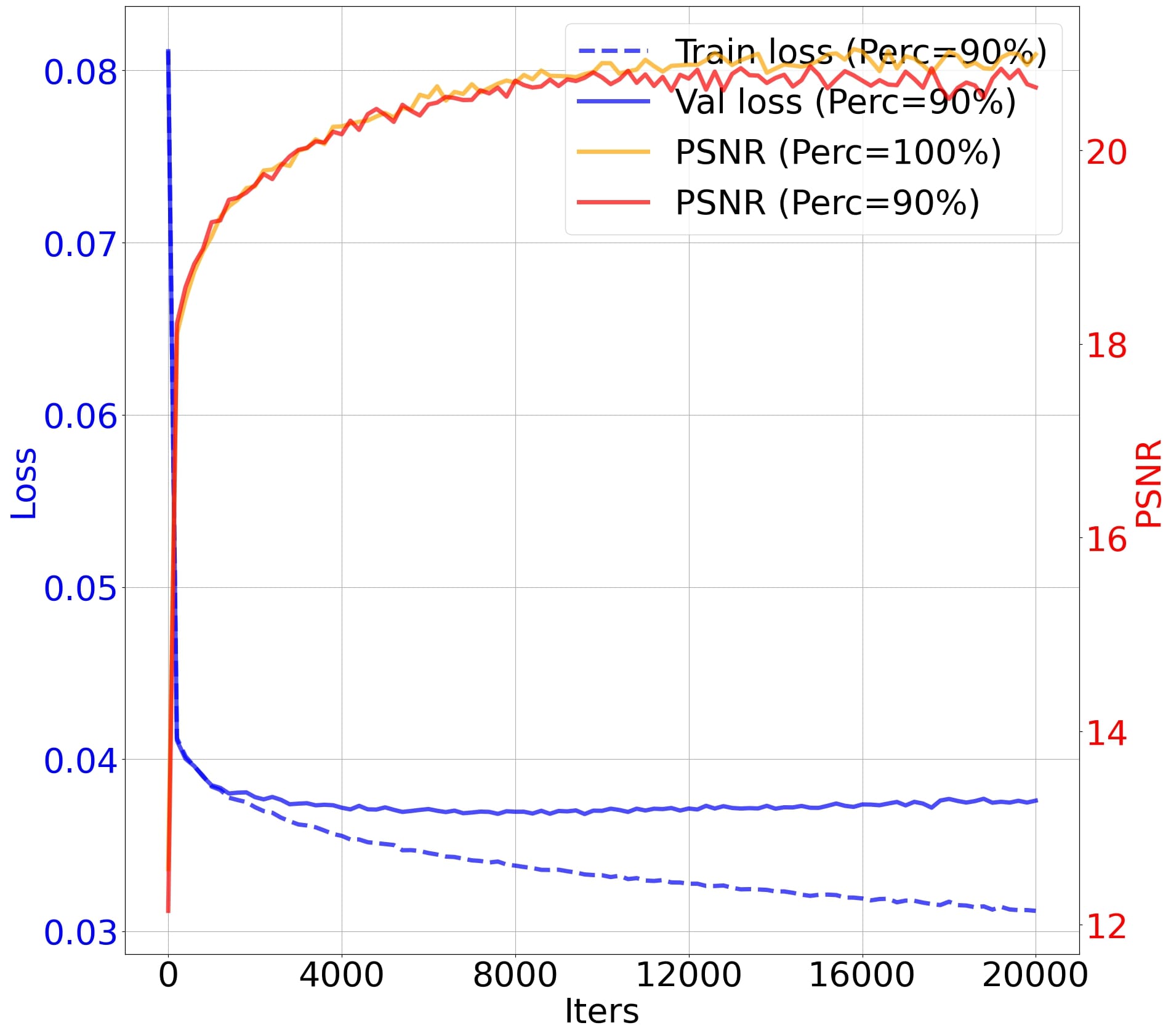}}
    \caption{\textbf{Results across different optimizer for salt and pepper noise}. The top row plots the results for Adam, and the bottom row plots the results for SGD. 
    }
    \label{fig:DIP_opt_sp}
\end{figure}
\subsection{Validty across loss functions}\label{subsec:dip_loss}
\begin{figure}[t]
    \centering
    \subfloat[Noisy Image]{\includegraphics[width=0.23\textwidth]{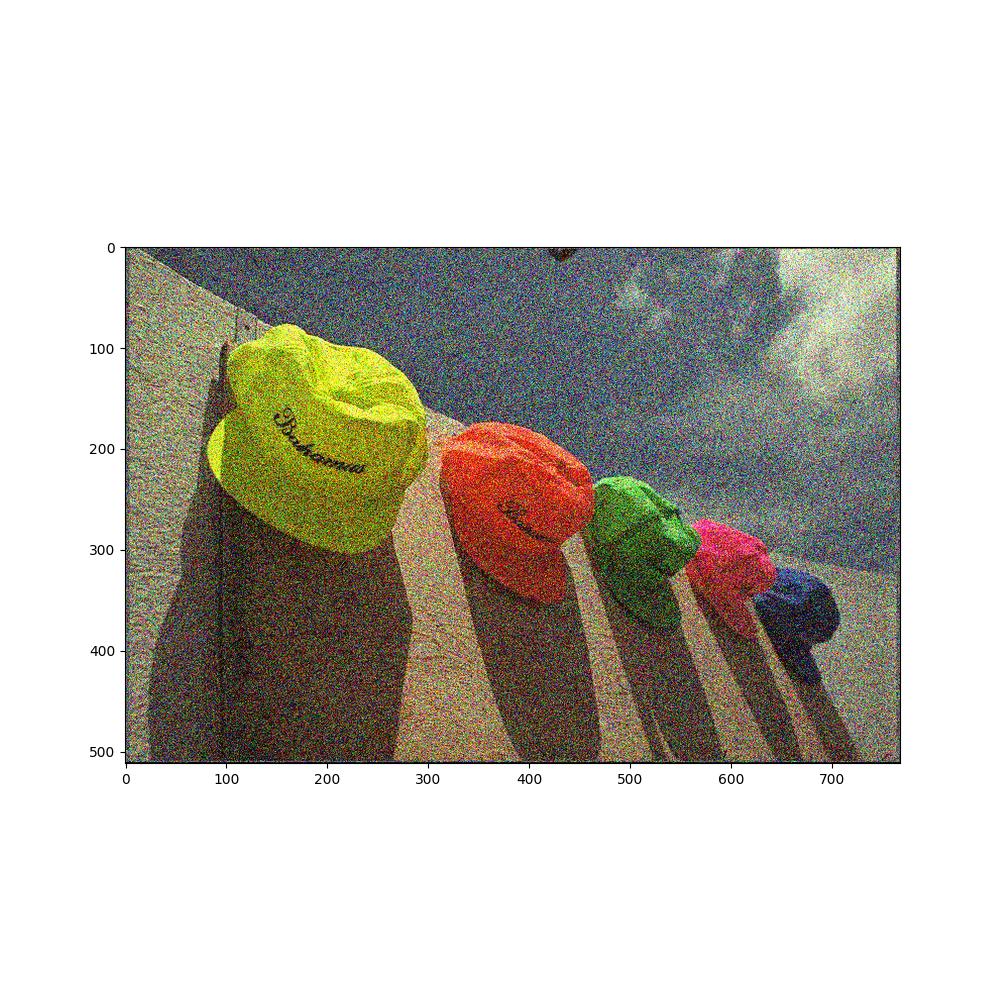}} \
    \subfloat[Best Val Loss (27.5) ]{\includegraphics[width=0.23\textwidth]{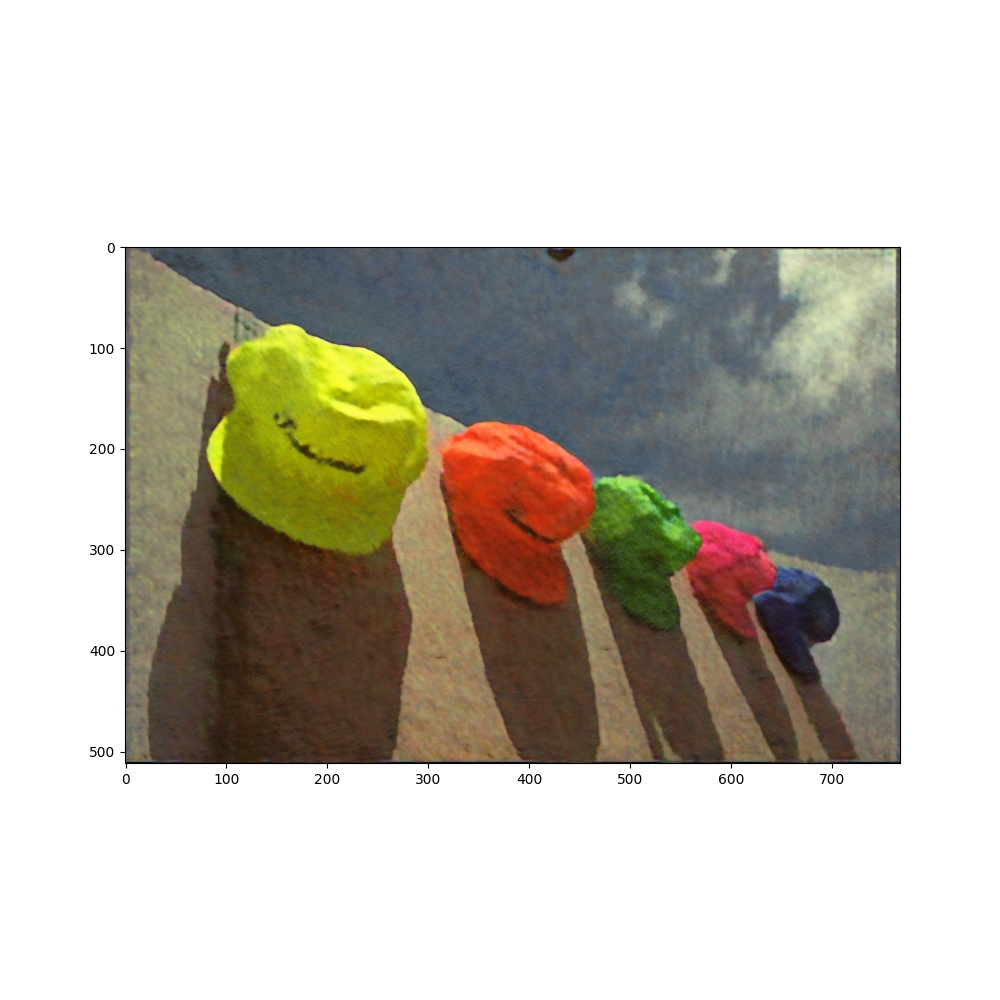}} \
    \subfloat[Best PSNR (27.5)]{\includegraphics[width=0.23\textwidth]{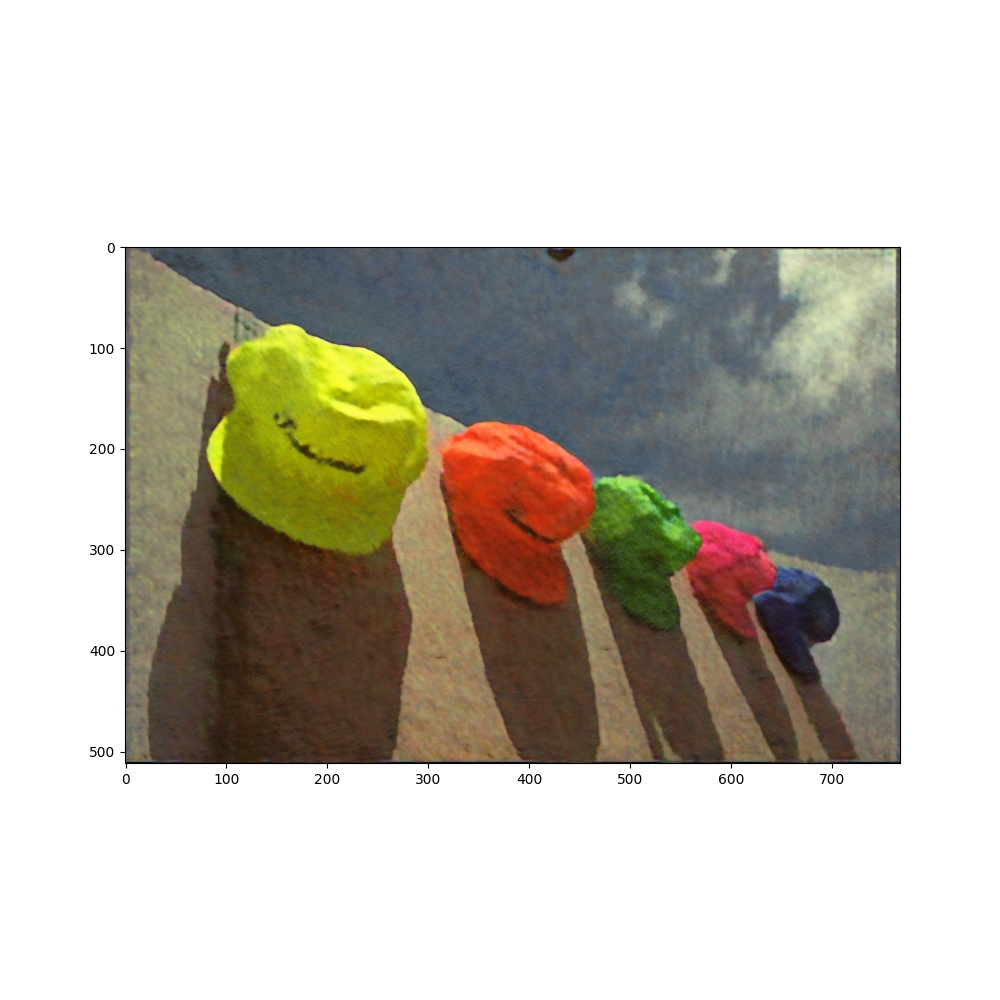}} \
    \subfloat[Progress]{\includegraphics[width=0.23\textwidth]{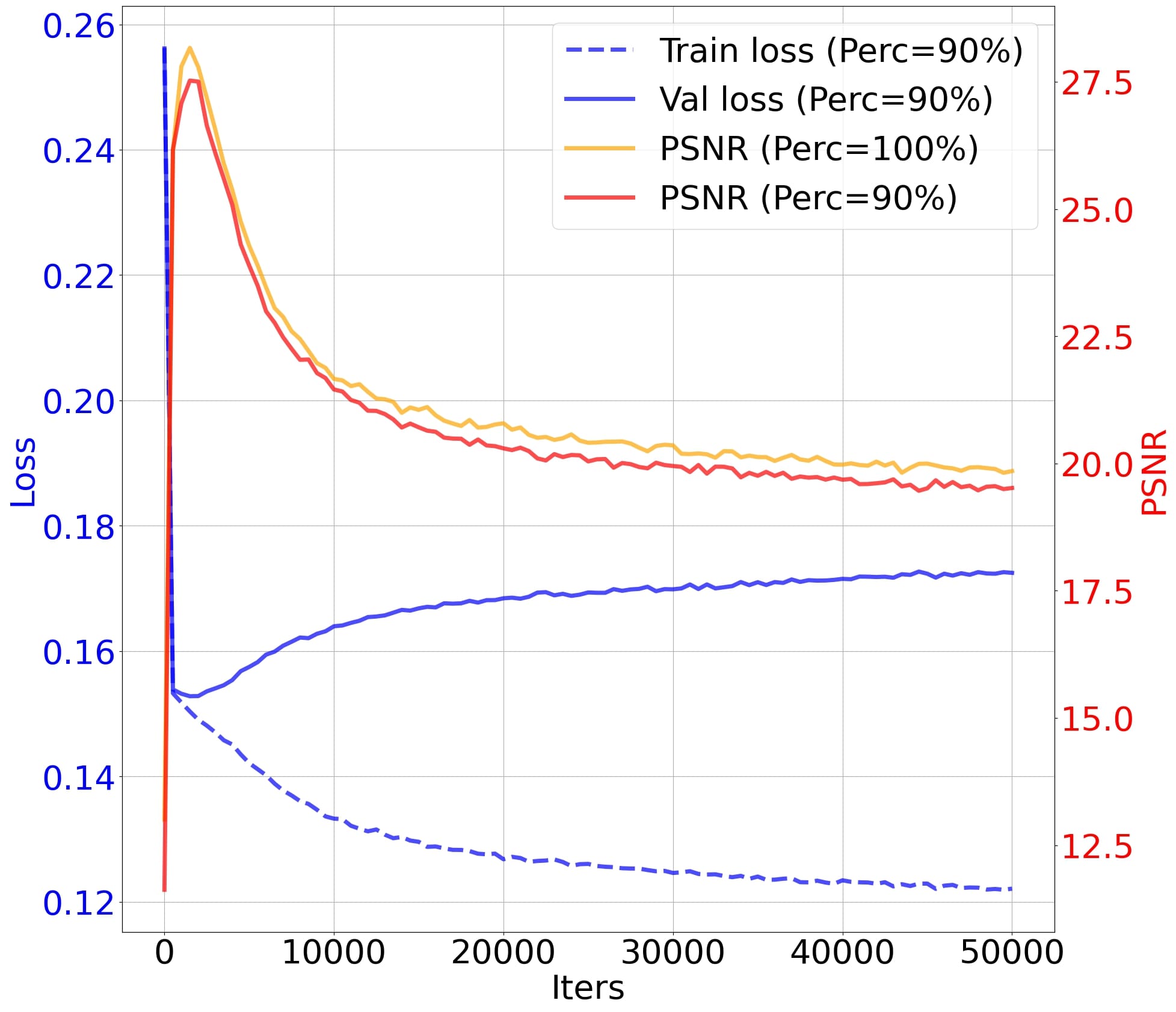}}\\
    
    \subfloat[Noisy Image]{\includegraphics[width=0.23\textwidth]{figs/dip/loss_type/l2_gs/kodim03_noise.png}} \
    \subfloat[Best Val Loss (27.8) ]{\includegraphics[width=0.23\textwidth]{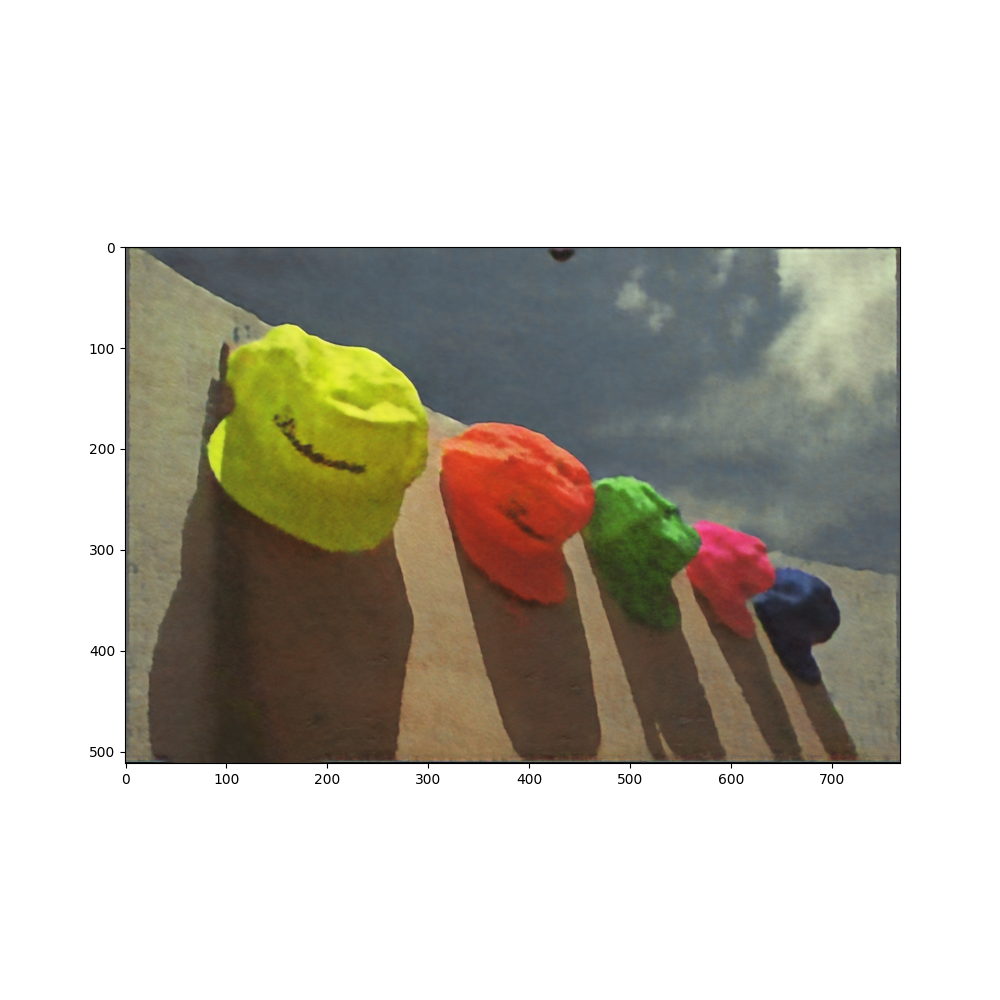}} \
    \subfloat[Best PSNR (27.9)]{\includegraphics[width=0.23\textwidth]{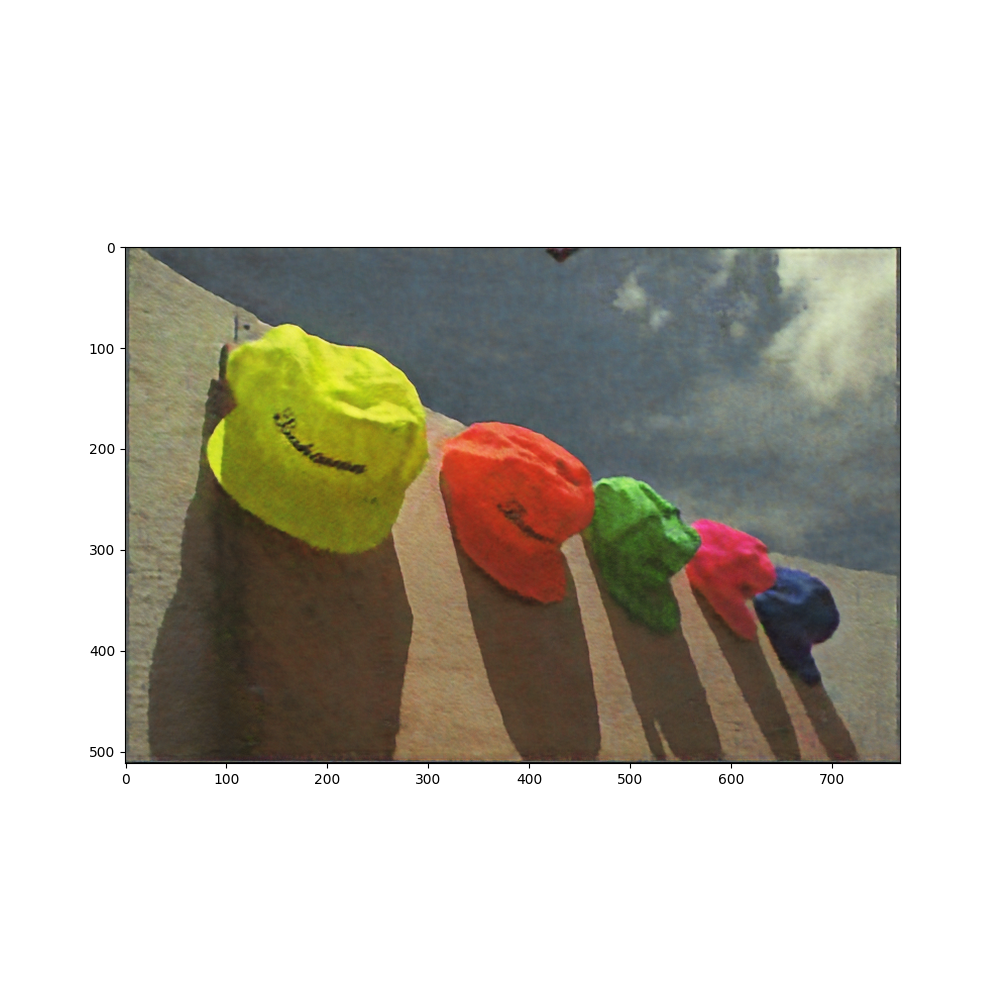}} \
    \subfloat[Progress]{\includegraphics[width=0.23\textwidth]{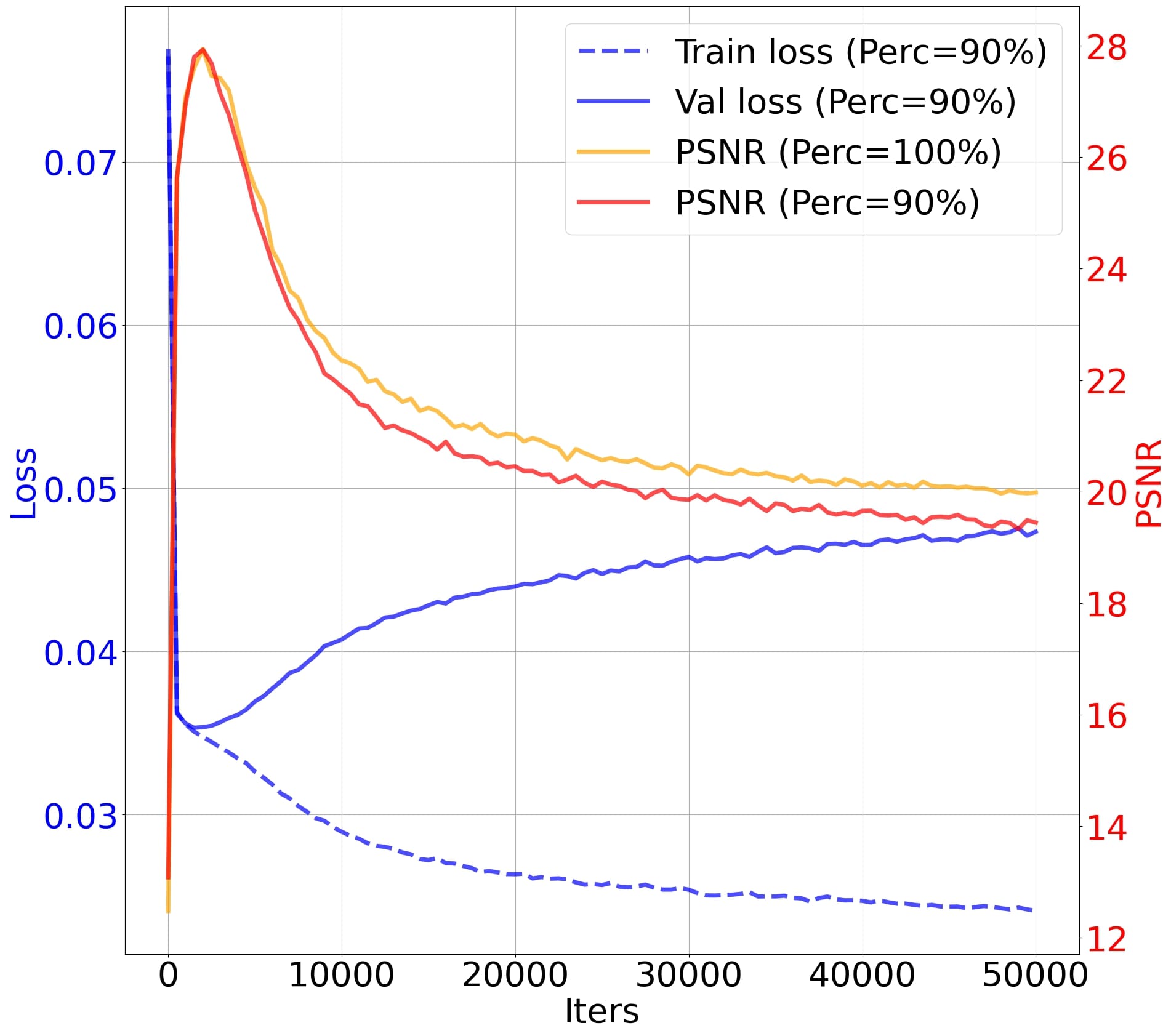}}
    \caption{\textbf{Results across different losses for guassian noise}. The top row plots the results for L1 loss, and the bottom row plots the results for L2 loss. 
    }
    \label{fig:DIP_loss_gs}
\end{figure}

\begin{figure}[t]
    \centering
    \subfloat[Noisy Image]{\includegraphics[width=0.23\textwidth]{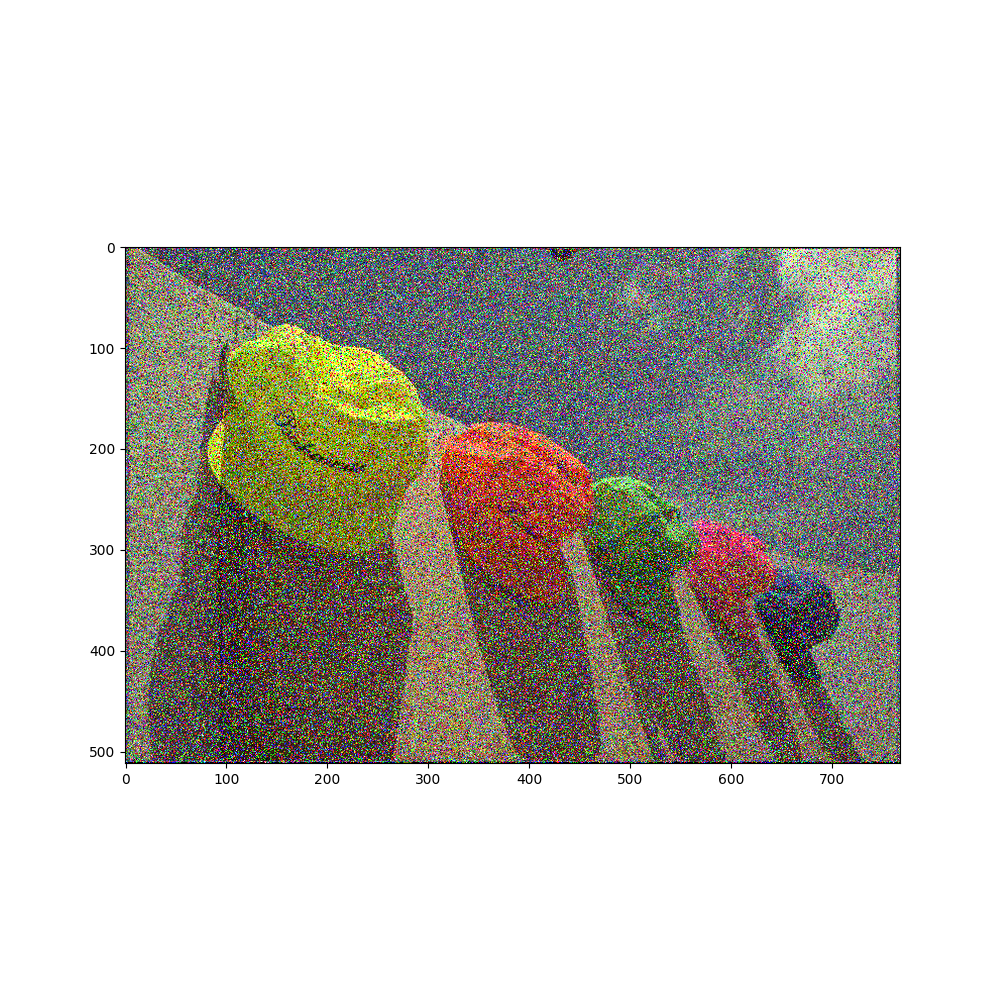}} \
    \subfloat[Best Val Loss  (35.2)]{\includegraphics[width=0.23\textwidth]{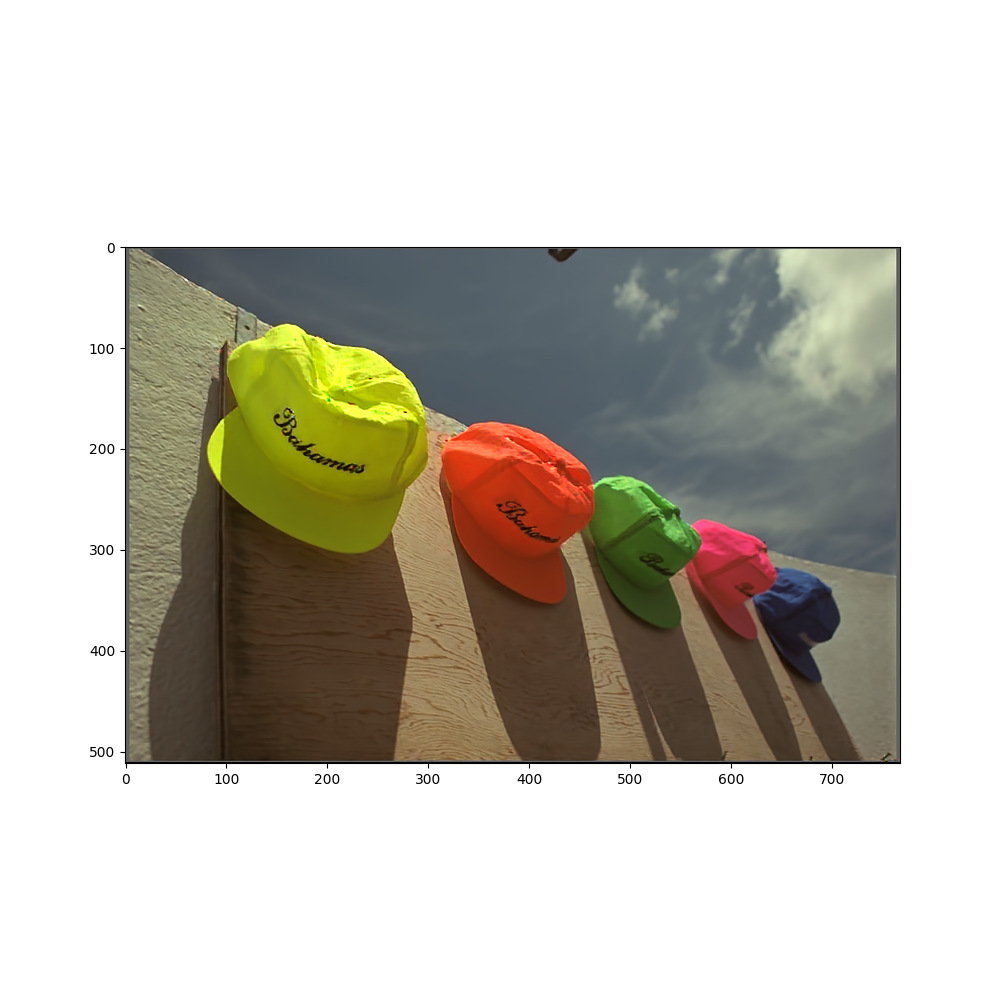}} \
    \subfloat[Best PSNR (35.2)]{\includegraphics[width=0.23\textwidth]{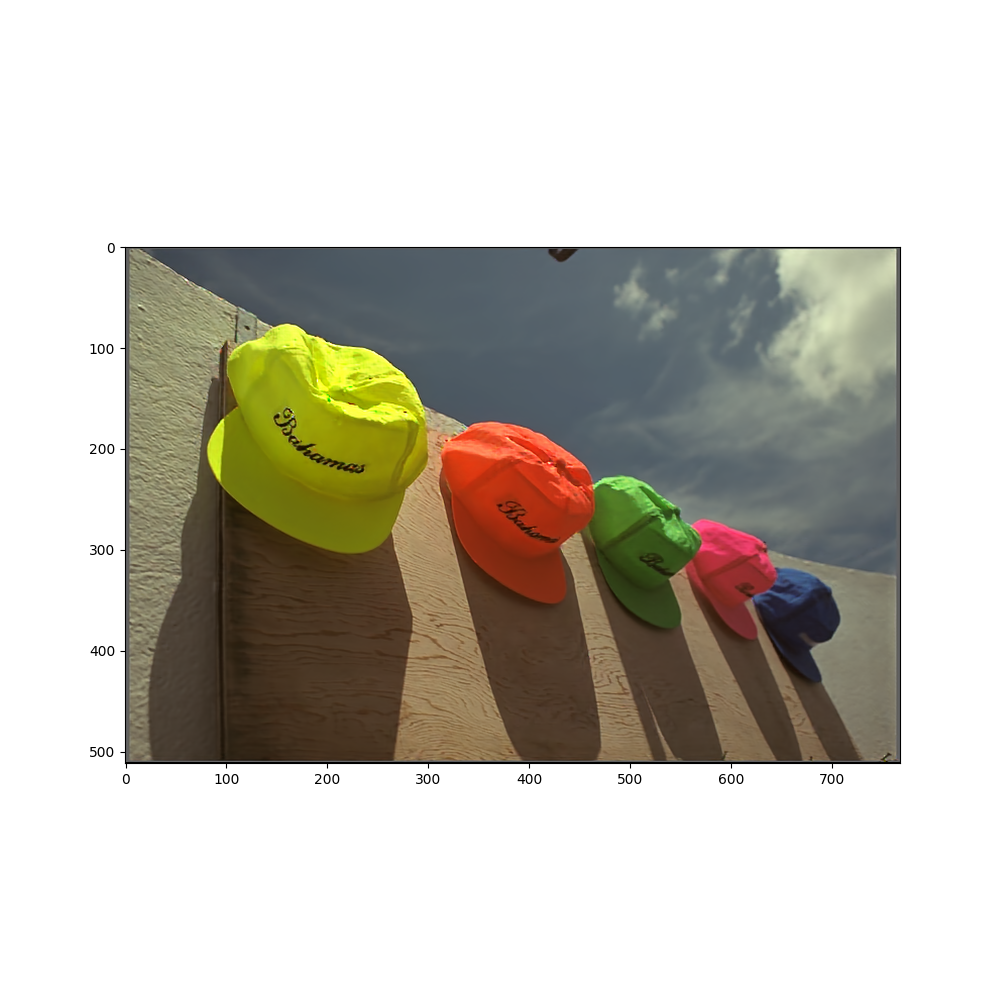}} \
    \subfloat[Progress]{\includegraphics[width=0.23\textwidth]{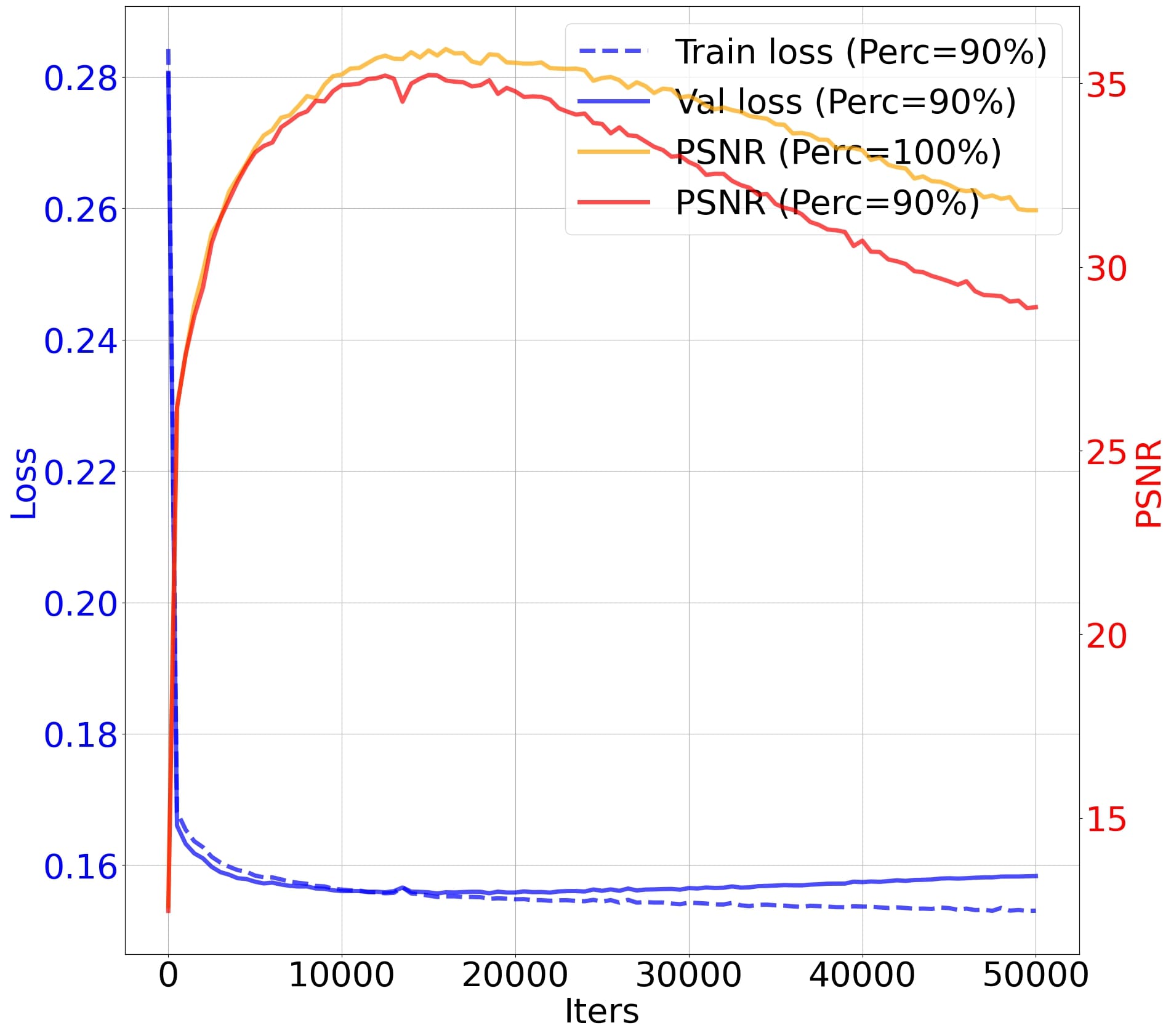}}\\
    
    \subfloat[Noisy Image]{\includegraphics[width=0.23\textwidth]{figs/dip/loss_type/l2_sp/kodim03_noise.png}} \
    \subfloat[Best Val Loss (21.9) ]{\includegraphics[width=0.23\textwidth]{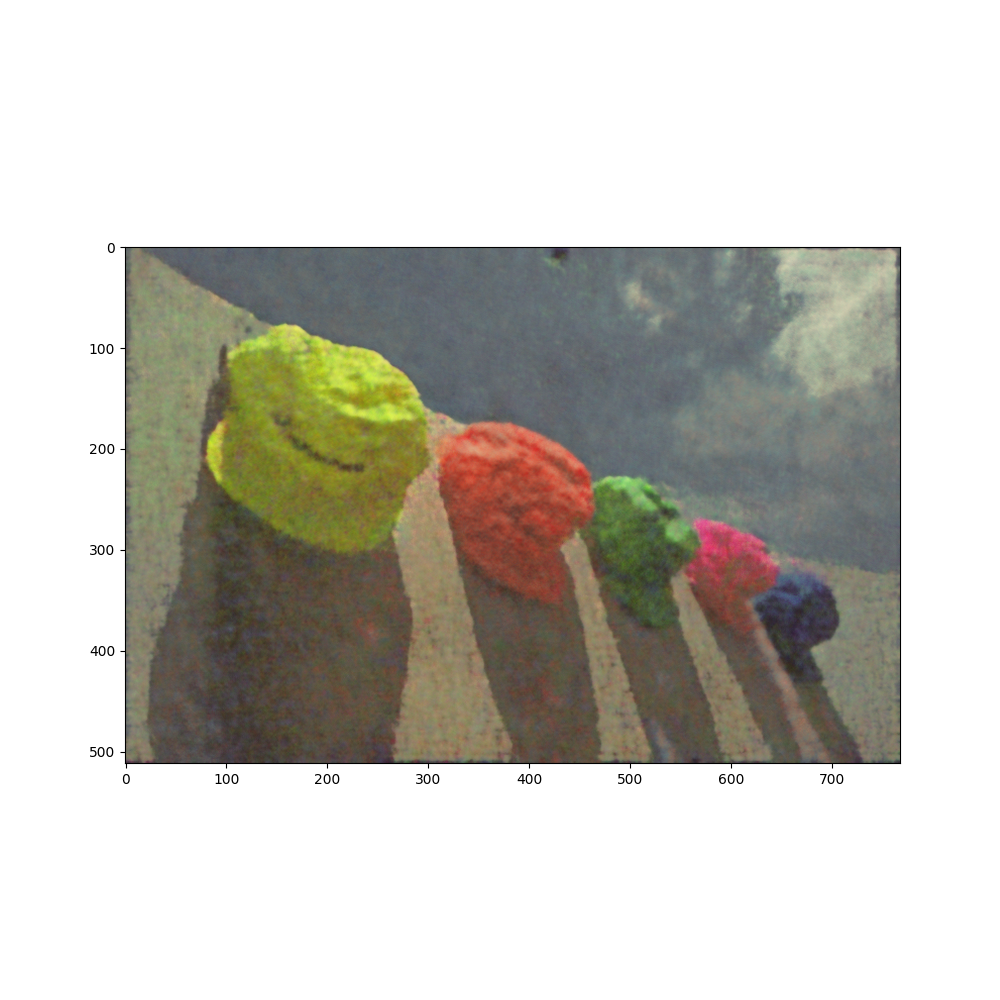}} \
    \subfloat[Best PSNR (21.9)]{\includegraphics[width=0.23\textwidth]{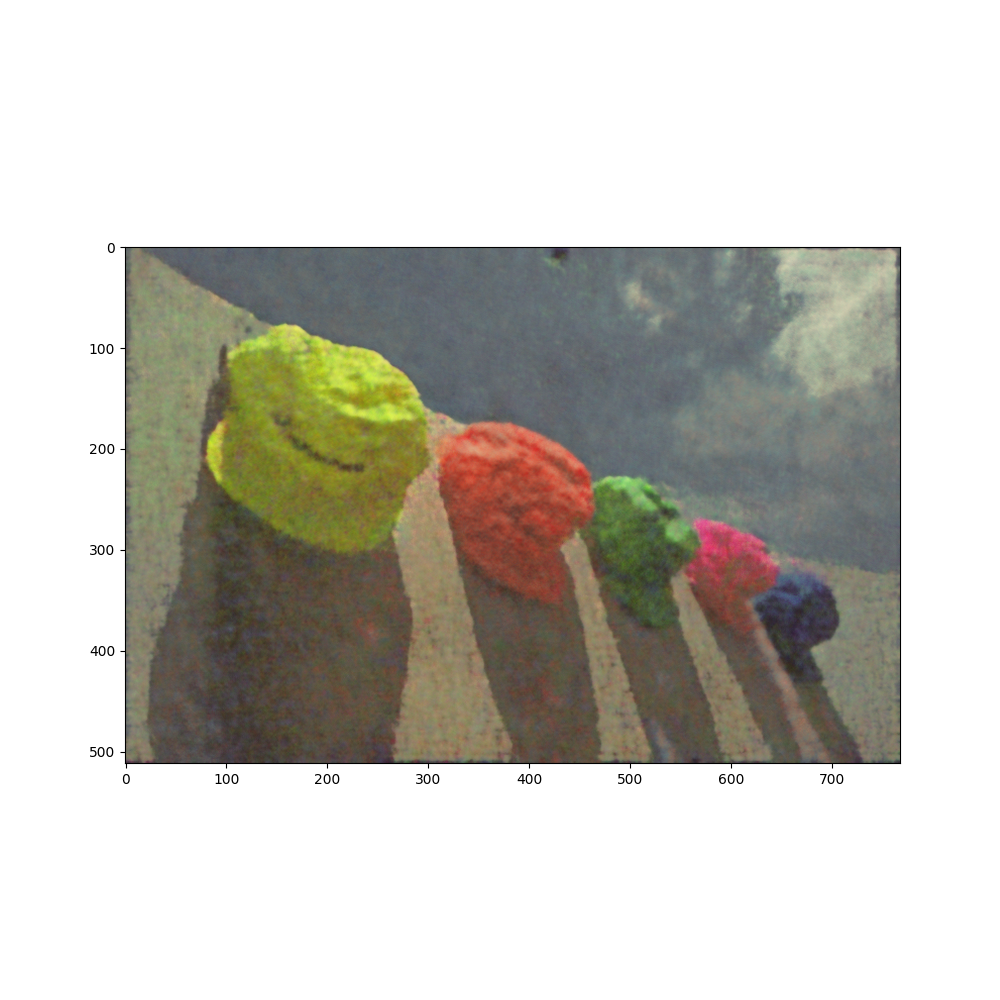}} \
    \subfloat[Progress]{\includegraphics[width=0.23\textwidth]{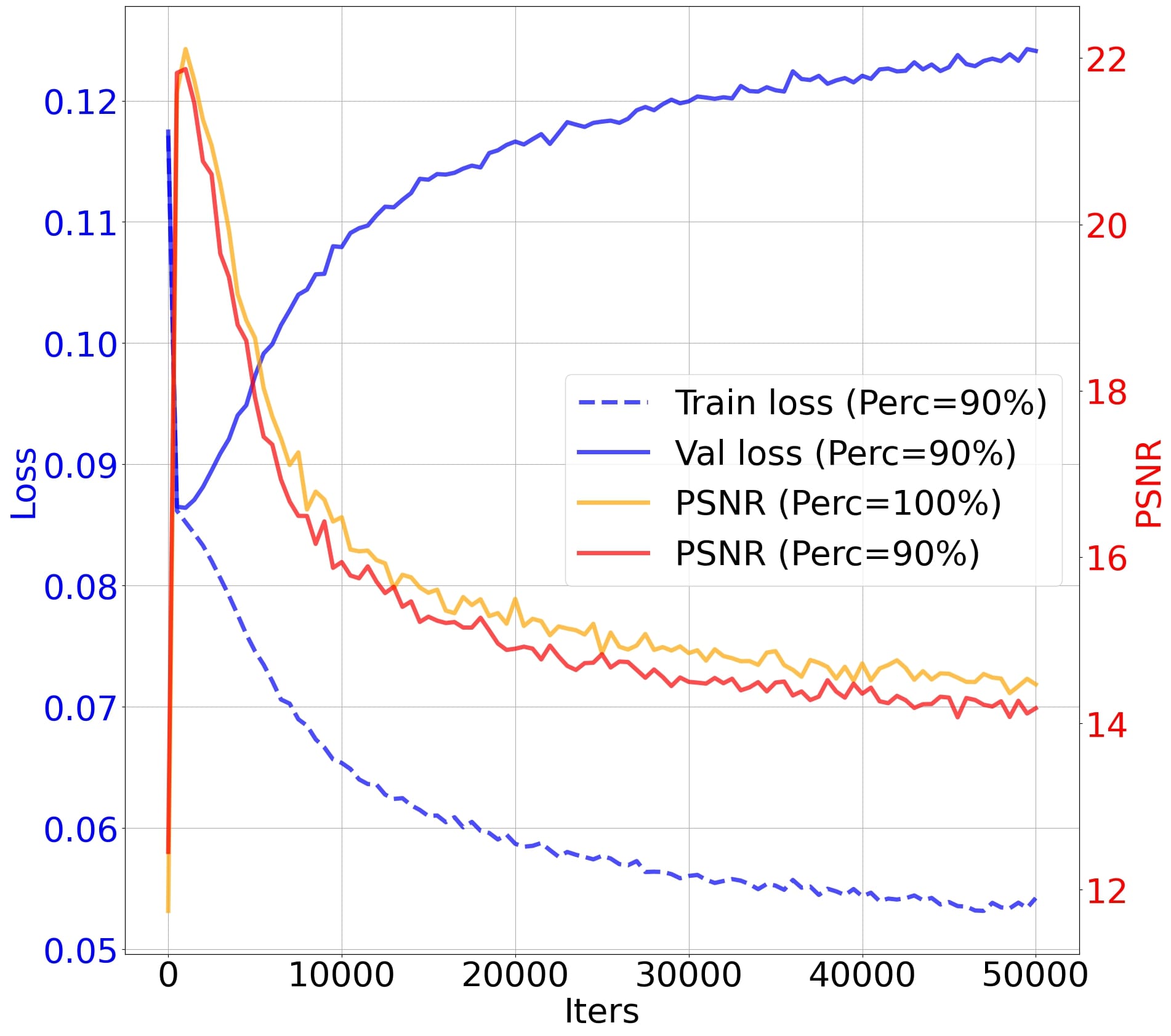}}
    \caption{\textbf{Results across different losses for salt and pepper noise}. The top row plots the results for L1 loss, and the bottom row plots the results for L2 loss. 
    }
    \label{fig:DIP_loss_sp}
\end{figure}
In this part, we verify the success of our method exists across different types of loss functions for different noise. We use Adam with learning rate $0.05$ to train the network for recovering the noise images under either L1 loss (at the top row) or L2 loss (at the bottom row) for 50000 iterations, where we evaluate the validation loss between generated images and corrupted images, and the PSNR between generated image and clean images every 500 iterations. In \Cref{fig:DIP_loss_gs}, we show that both L1 loss and L2 loss works for Gaussian noise, where we use the Gaussian noise with mean 0 and variance 0.2. At the top row, we plot the results of recovering images under L1 loss. The PSNR of chosen recovered image according to the validation loss is $27.5232$ at the second column, which is comparable to the Best PSNR $27.5233$ through the training progress at the third column. The best PSNR of full noisy image is $28.1647$ through the training progress at the last column. At the bottom row, we show the results under L2 loss, The PSNR of decided image according to the validation loss is $27.7941$ at the second column, which is comparable to the Best PSNR $27.9268$ through the training progress at the third column. The best PSNR of full noisy image is $27.9370$ through the training progress at the last column. In \Cref{fig:DIP_loss_sp}, we show that both L1 loss and L2 loss works for salt and pepper noise, where $30\%$ percent of pixels are randomly corrupted. At the top row, we plot the results of recovering images under L1 loss. The PSNR of chosen recovered image according to the validation loss is $35.2155$ at the second column, which is comparable to the Best PSNR $35.2266$ through the training progress at the third column. The best PSNR of full noisy image is $35.9319$ through the training progress at the last column. At the bottom row, we show the results under L2 loss, The PSNR of decided image according to the validation loss is $21.8679$ at the second column, which is comparable to the Best PSNR $21.8679$ through the training progress at the third column. The best PSNR of full noisy image is $22.1088$ through the training progress at the last column. Comparing these results, the L1 loss usually performs comparably as L2 loss to recovery the noisy images corrupted by either Gaussian noise, and L1 loss produces cleaner recovered image than L2 loss for salt and pepper noise, therefore, we will use L1 loss in the following parts.
\subsection{Validty across noise degree}\label{subsec:dip_nlevel}
In this part, we verify the success of our method exists across different noise degrees for different noise. We use Adam with learning rate $0.05$ to train the network for recovering the noise images under L1 loss for 50000 iterations, where we evaluate the validation loss between generated images and corrupted images, and the PSNR between generated image and clean images every 500 iterations. In \Cref{fig:DIP_gs_nlevel}, we show that our methods works for different noise degree of Gaussian noise, where we use the Gaussian noise with mean 0. At the top row, we plot the results of recovering images under 0.1 variance of gaussian noise. The PSNR of chosen recovered image according to the validation loss is $28.6694$ at the second column, which is comparable to the Best PSNR $28.6694$ through the training progress at the third column. The best PSNR of full noisy image is $28.9929$ through the training progress at the last column. At the middel row, we plot the results of recovering images under 0.2 variance of gaussian noise. The PSNR of chosen recovered image according to the validation loss is $25.6036$ at the second column, which is comparable to the Best PSNR $25.6839$ through the training progress at the third column. The best PSNR of full noisy image is $25.8879$ through the training progress at the last column. At the bottom row, we show the results under under 0.3 variance of gaussian noise, The PSNR of decided image according to the validation loss is $23.7784$ at the second column, which is identical to the Best PSNR $23.7784$ through the training progress at the third column. The best PSNR of full noisy image is $23.9001$ through the training progress at the last column. In \Cref{fig:DIP_sp_nlevel}, we show that our methods works for different noise degree of salt and pepper noise. At the top row, we plot the results of recovering images with 10 percent of pixels randomly corrupted by salt and pepper noise. The PSNR of chosen recovered image according to the validation loss is $35.3710$ at the second column, which is comparable to the Best PSNR $35.5430$ through the training progress at the third column. The best PSNR of full noisy image is $35.9778$ through the training progress at the last column. At the middel row, we plot the results of recovering images under 30 percent of pixels randomly corrupted by salt and pepper noise. The PSNR of chosen recovered image according to the validation loss is $32.9904$ at the second column, which is comparable to the Best PSNR $33.0188$ through the training progress at the third column. The best PSNR of full noisy image is $33.4267$ through the training progress at the last column. At the bottom row, we show the results under 50 percent of pixels randomly corrupted by salt and pepper noise The PSNR of decided image according to the validation loss is $29.8836$ at the second column, which is identical to the Best PSNR $29.8836$ through the training progress at the third column. The best PSNR of full noisy image is $30.1512$ through the training progress at the last column. Comparing both the results of gaussian noise and salt and pepper noise, we can draw three conclusion. First, the PSNR of recovered image drops with the noise degree increases. Second, The peak of PSNR occurs earlier with the noise degree increases. Last, the neighbor near the peak of PSNR becomes shaper with the noise degree increases.
\begin{figure}[t]
    \centering
    \subfloat[Noisy Image]{\includegraphics[width=0.23\textwidth]{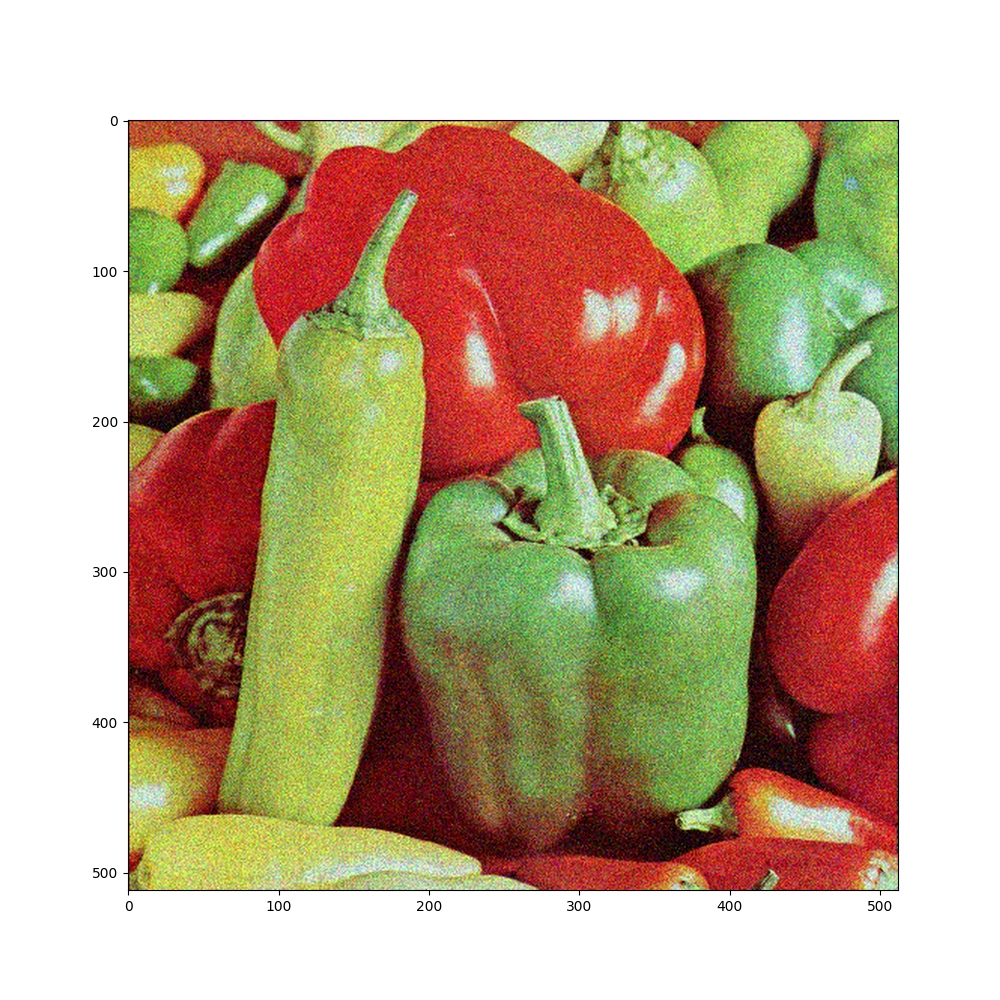}} \
    \subfloat[Best Val Loss (28.7) ]{\includegraphics[width=0.23\textwidth]{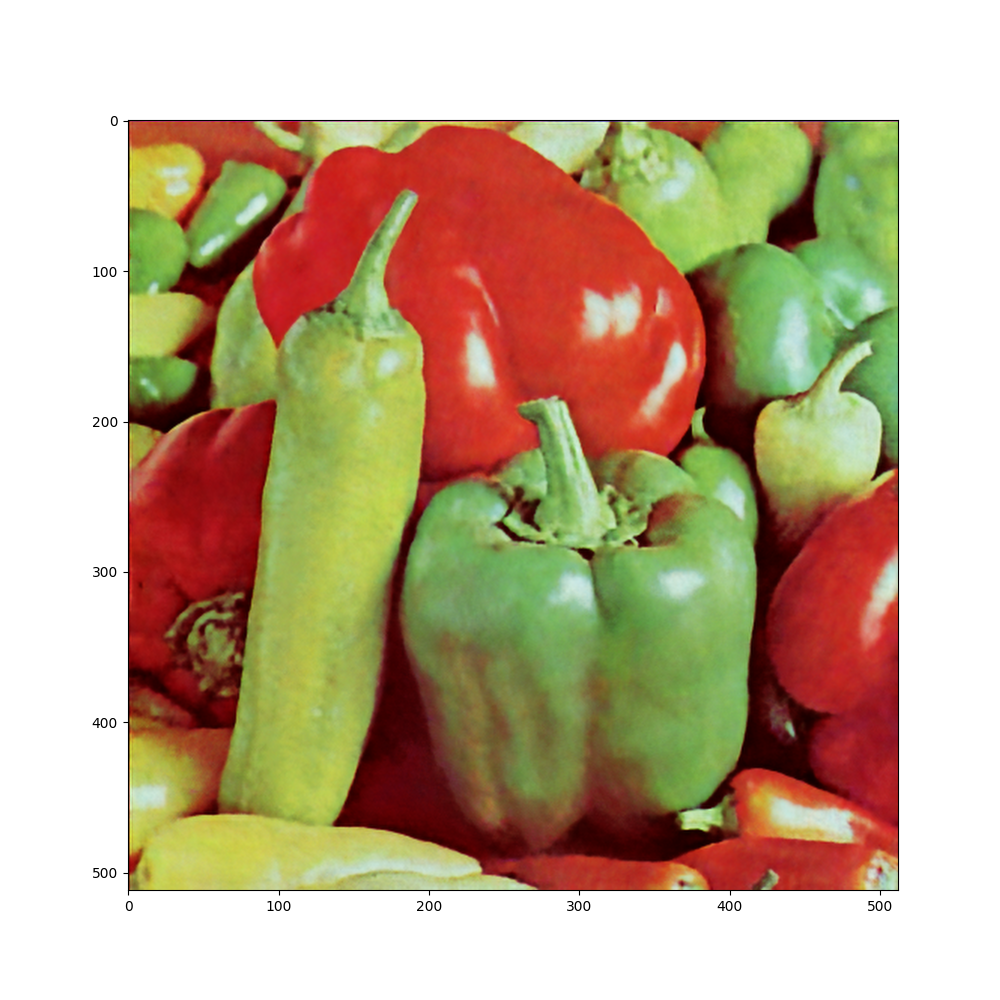}} \
    \subfloat[Best PSNR (28.7)]{\includegraphics[width=0.23\textwidth]{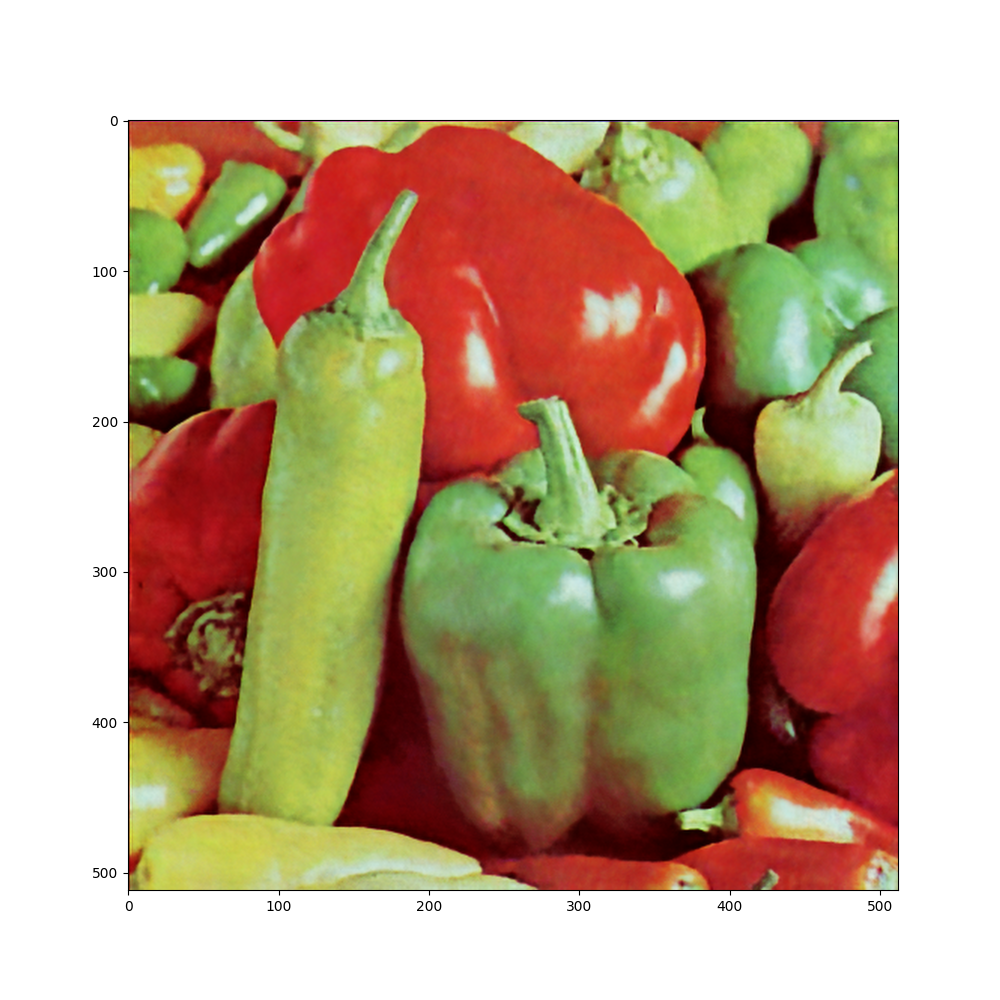}} \
    \subfloat[Progress]{\includegraphics[width=0.23\textwidth]{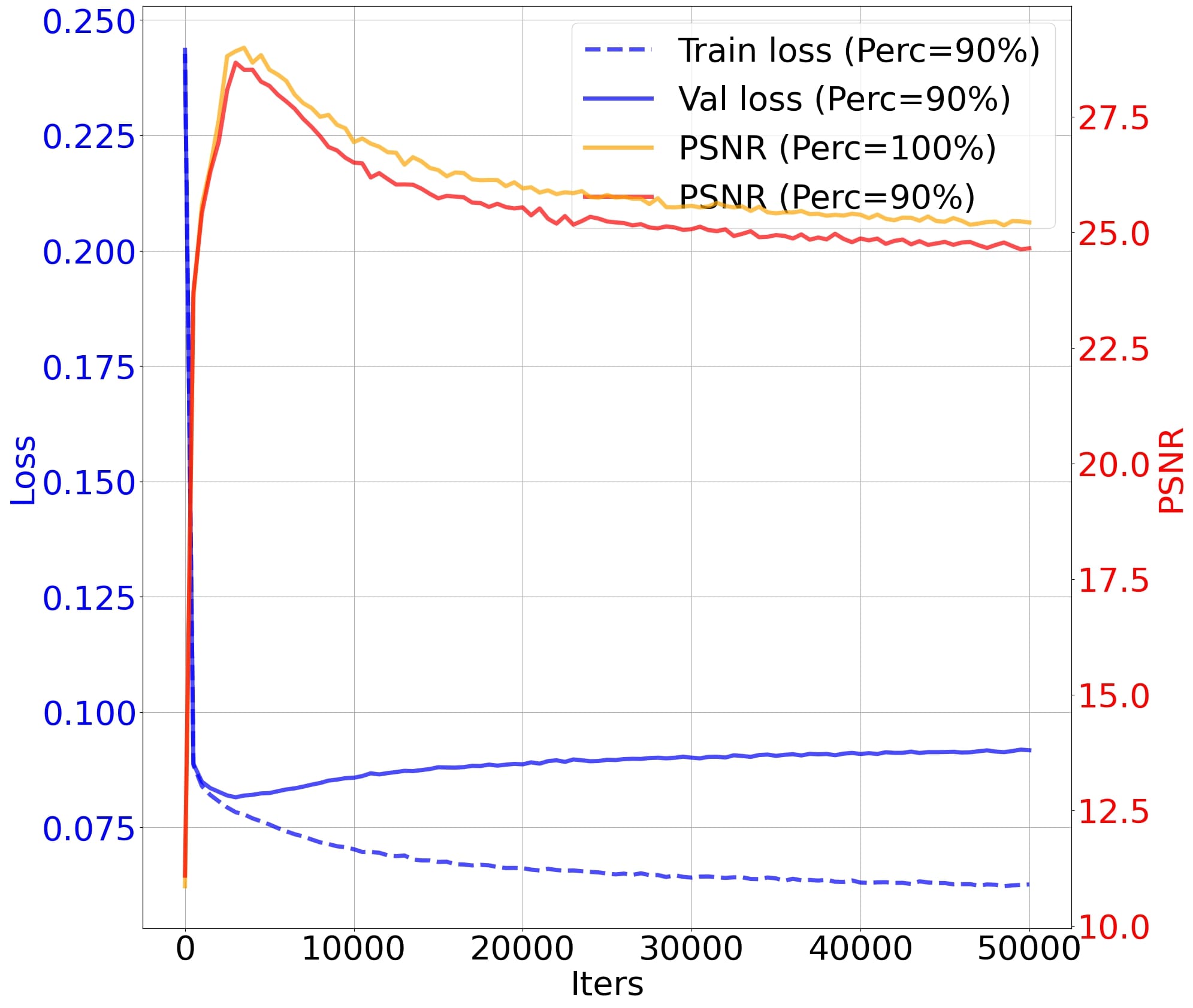}}\\
    
    \subfloat[Noisy Image]{\includegraphics[width=0.23\textwidth]{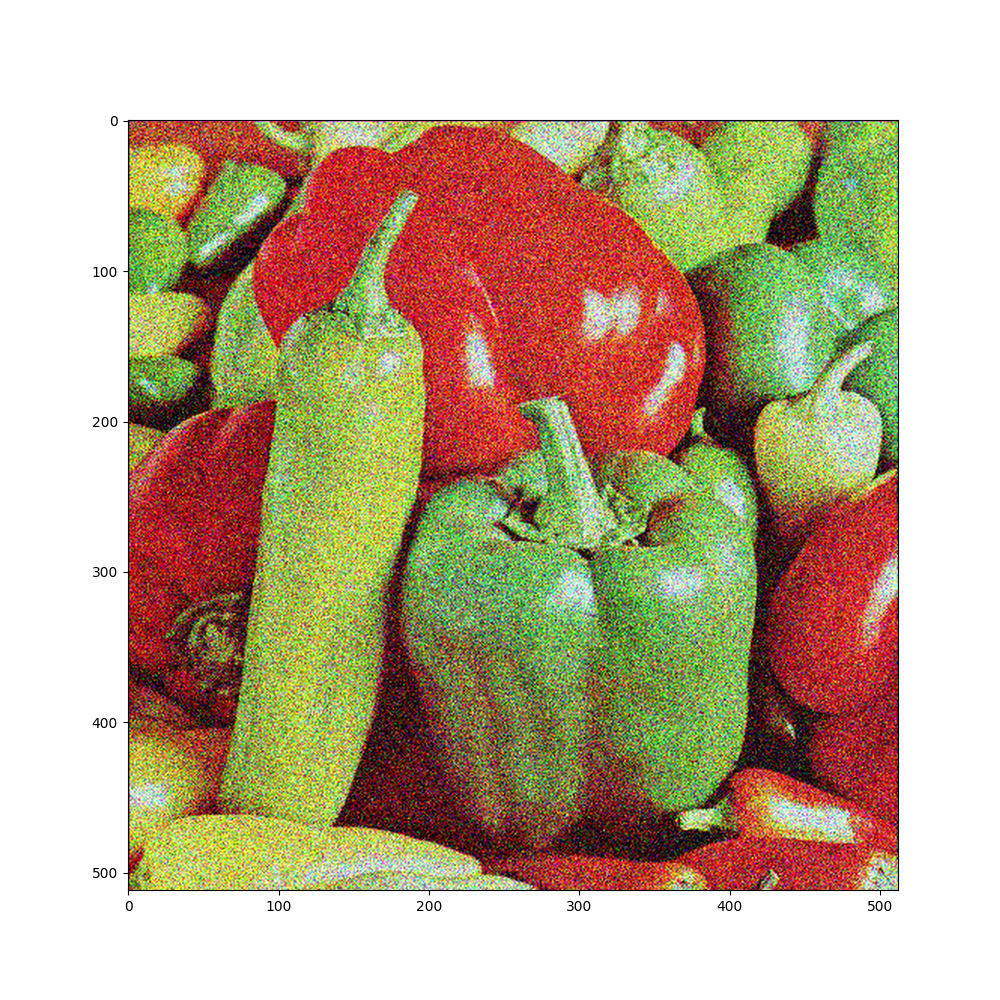}} \
    \subfloat[Best Val Loss (25.6) ]{\includegraphics[width=0.23\textwidth]{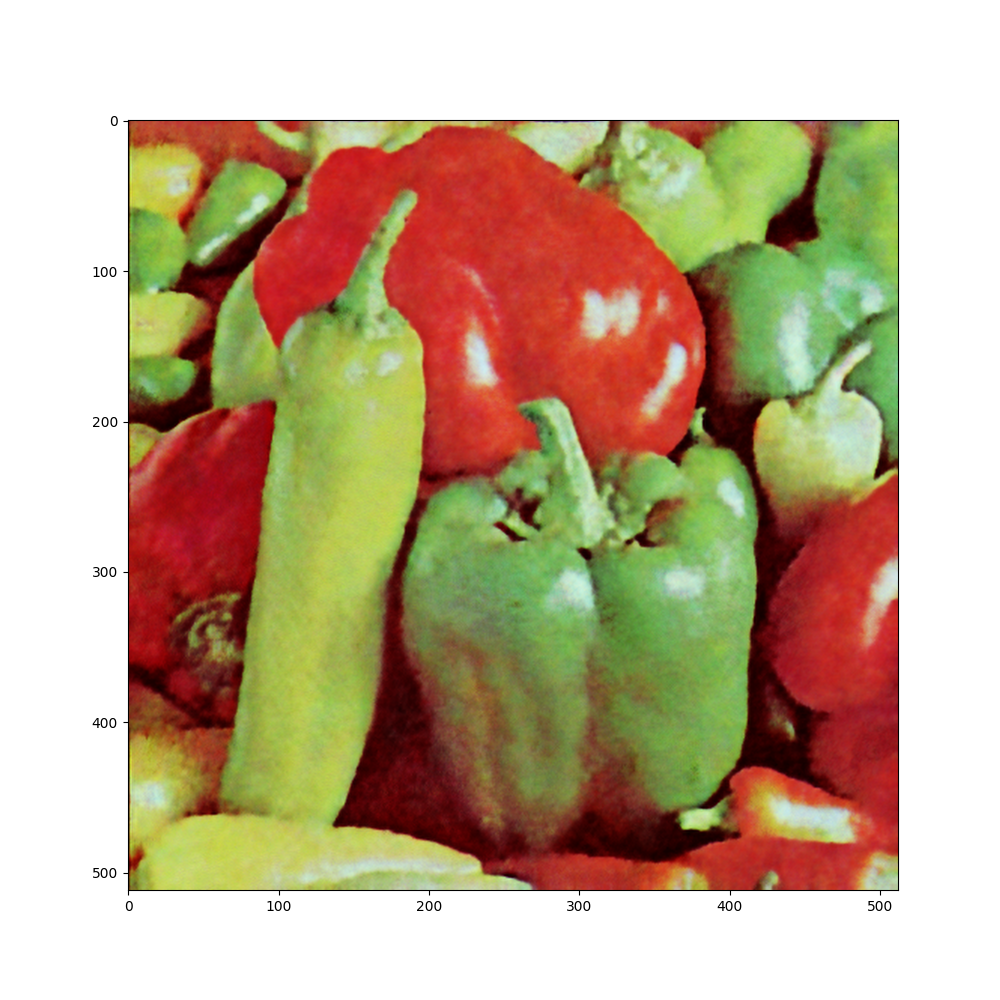}} \
    \subfloat[Best PSNR (25.7)]{\includegraphics[width=0.23\textwidth]{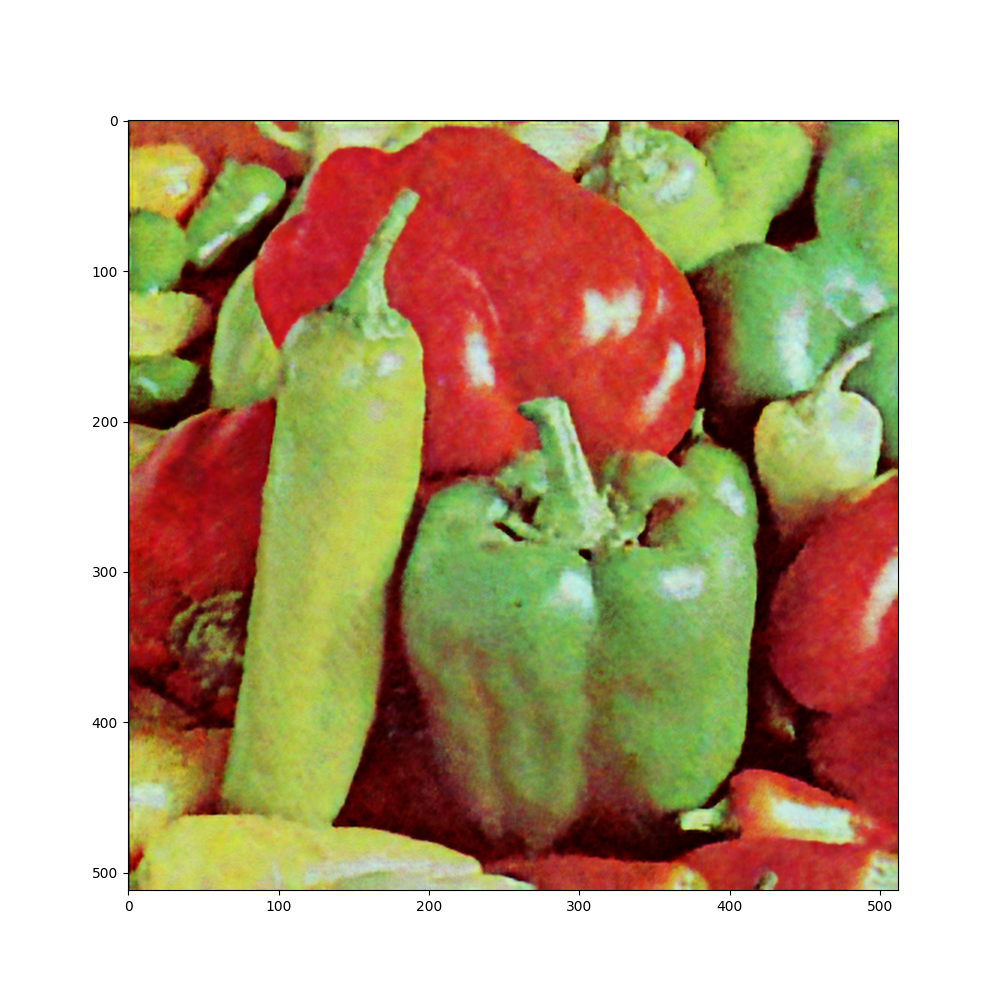}} \
    \subfloat[Progress]{\includegraphics[width=0.23\textwidth]{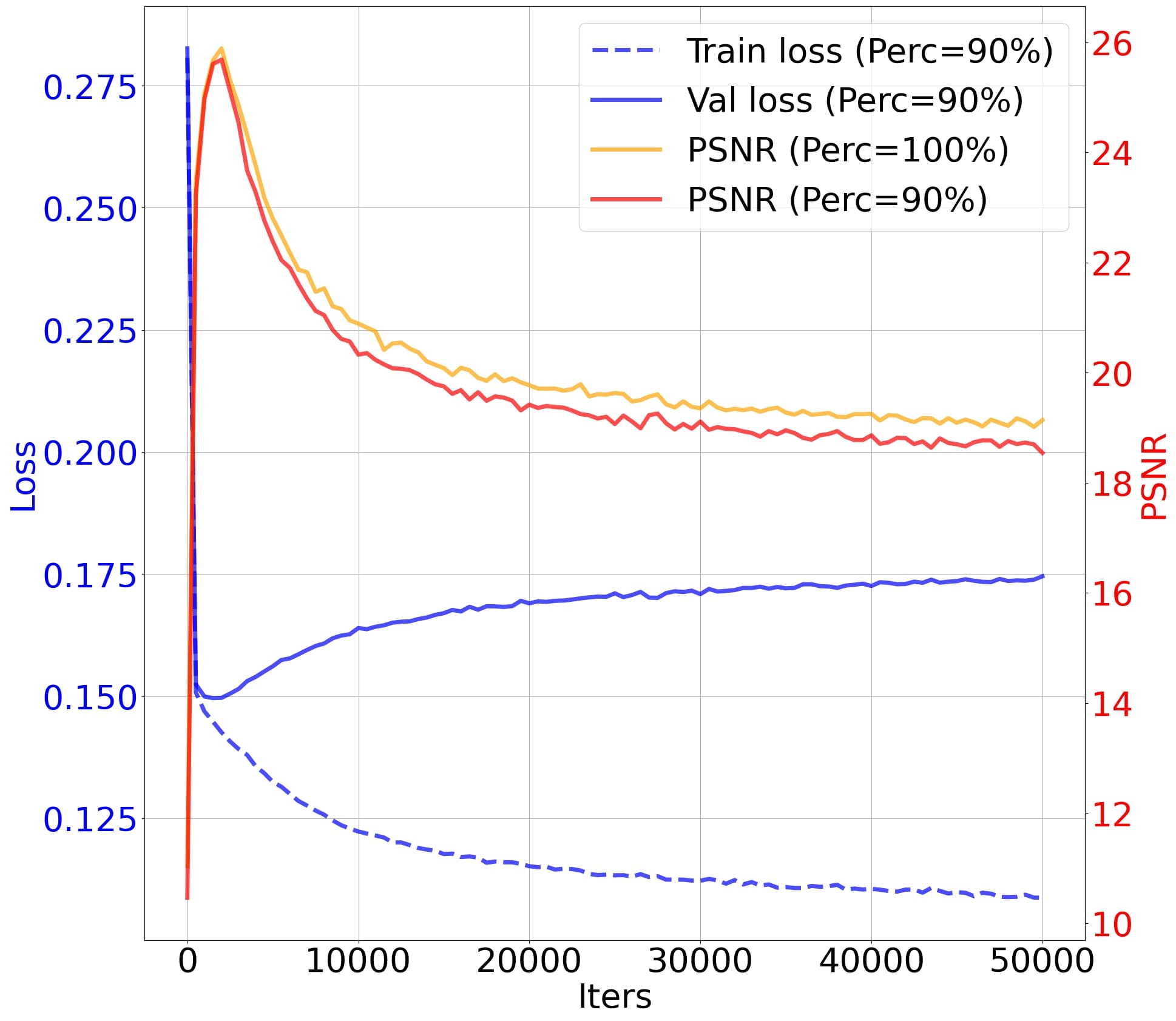}}\\
    
    \subfloat[Noisy Image]{\includegraphics[width=0.23\textwidth]{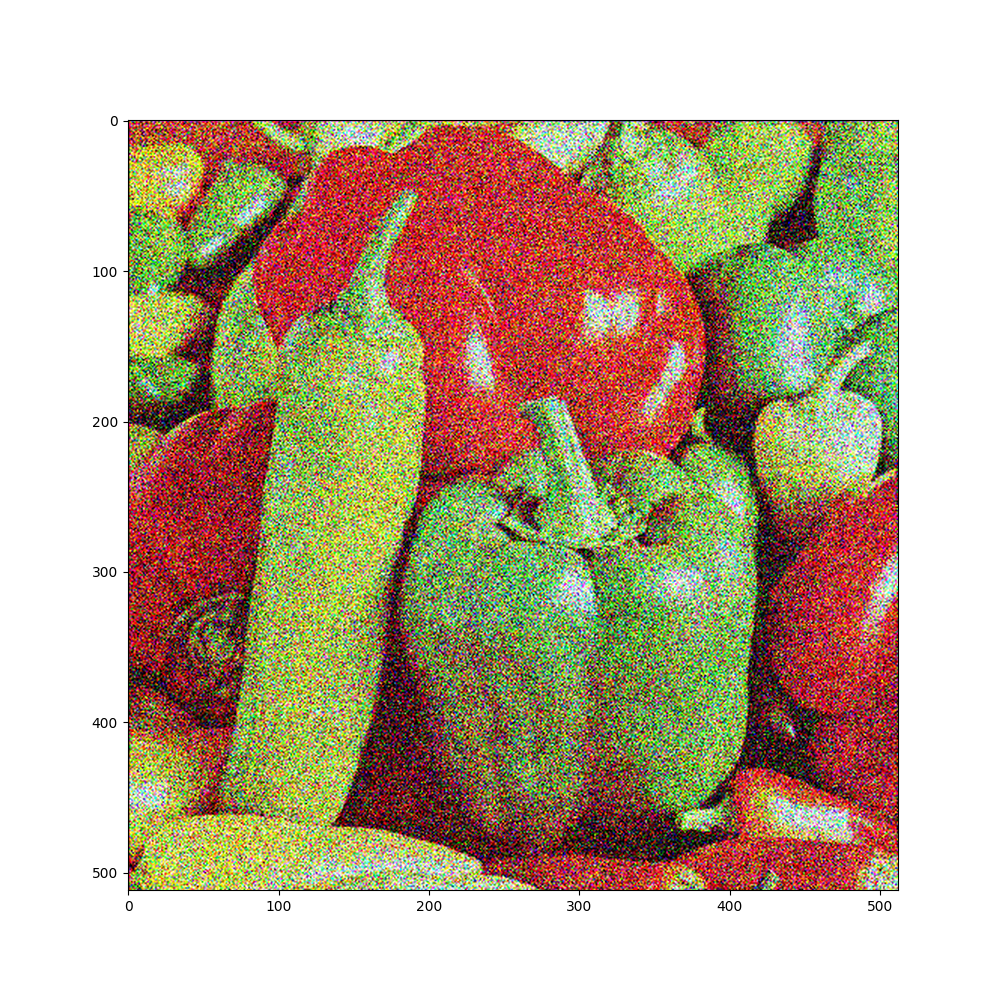}} \
    \subfloat[Best Val Loss  (23.8) ]{\includegraphics[width=0.23\textwidth]{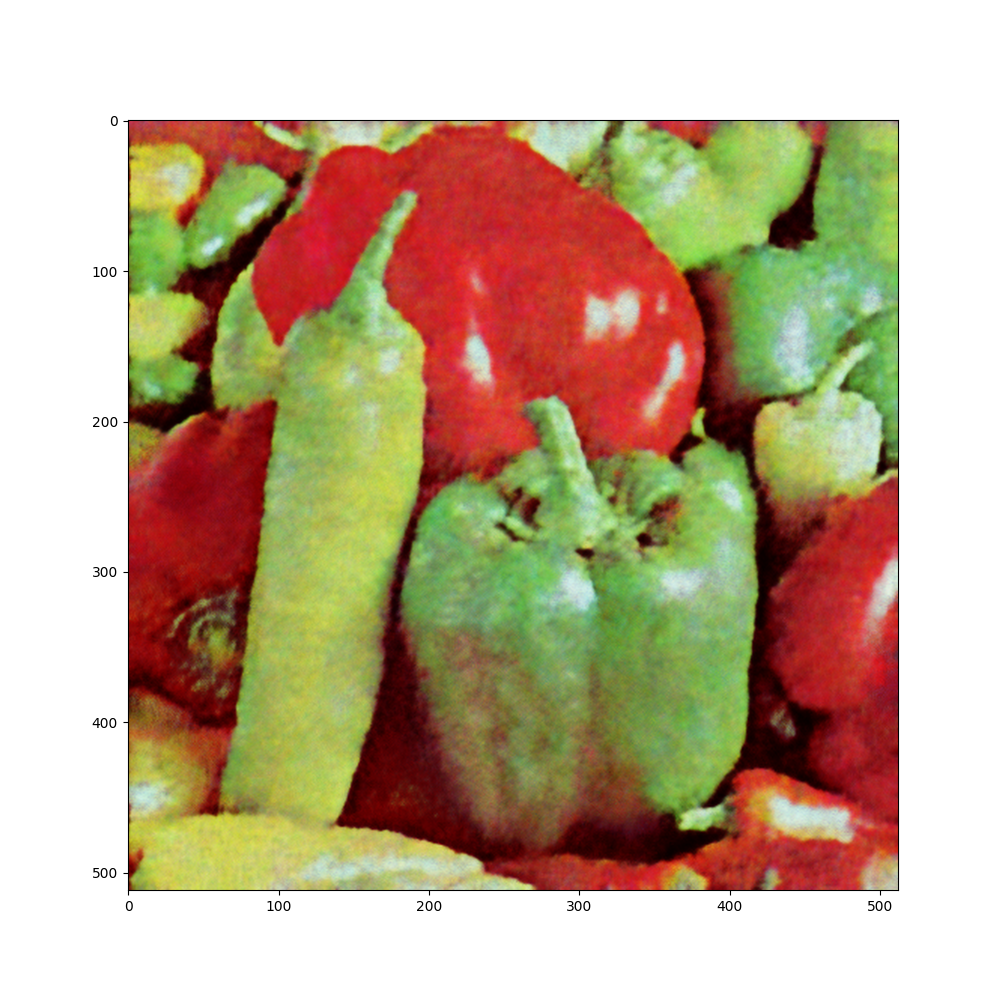}} \
    \subfloat[Best PSNR (23.8)]{\includegraphics[width=0.23\textwidth]{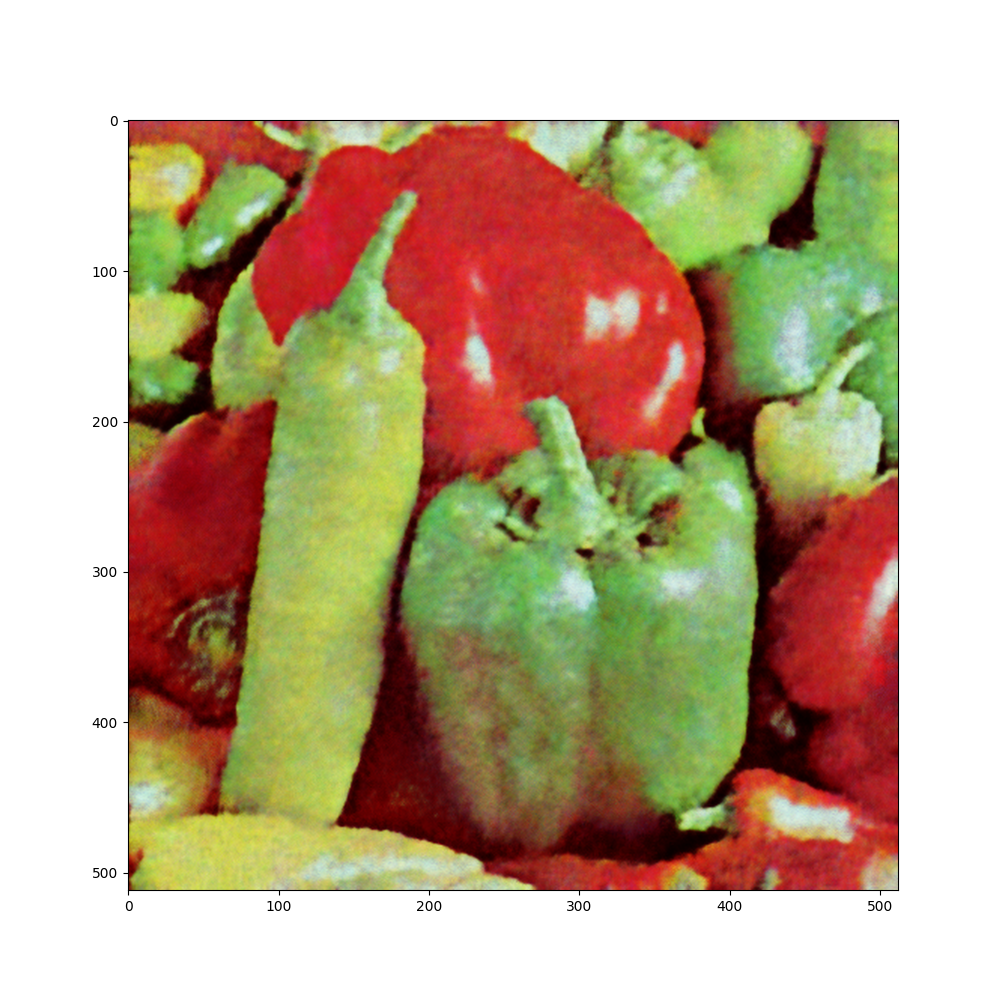}} \
    \subfloat[Progress]{\includegraphics[width=0.23\textwidth]{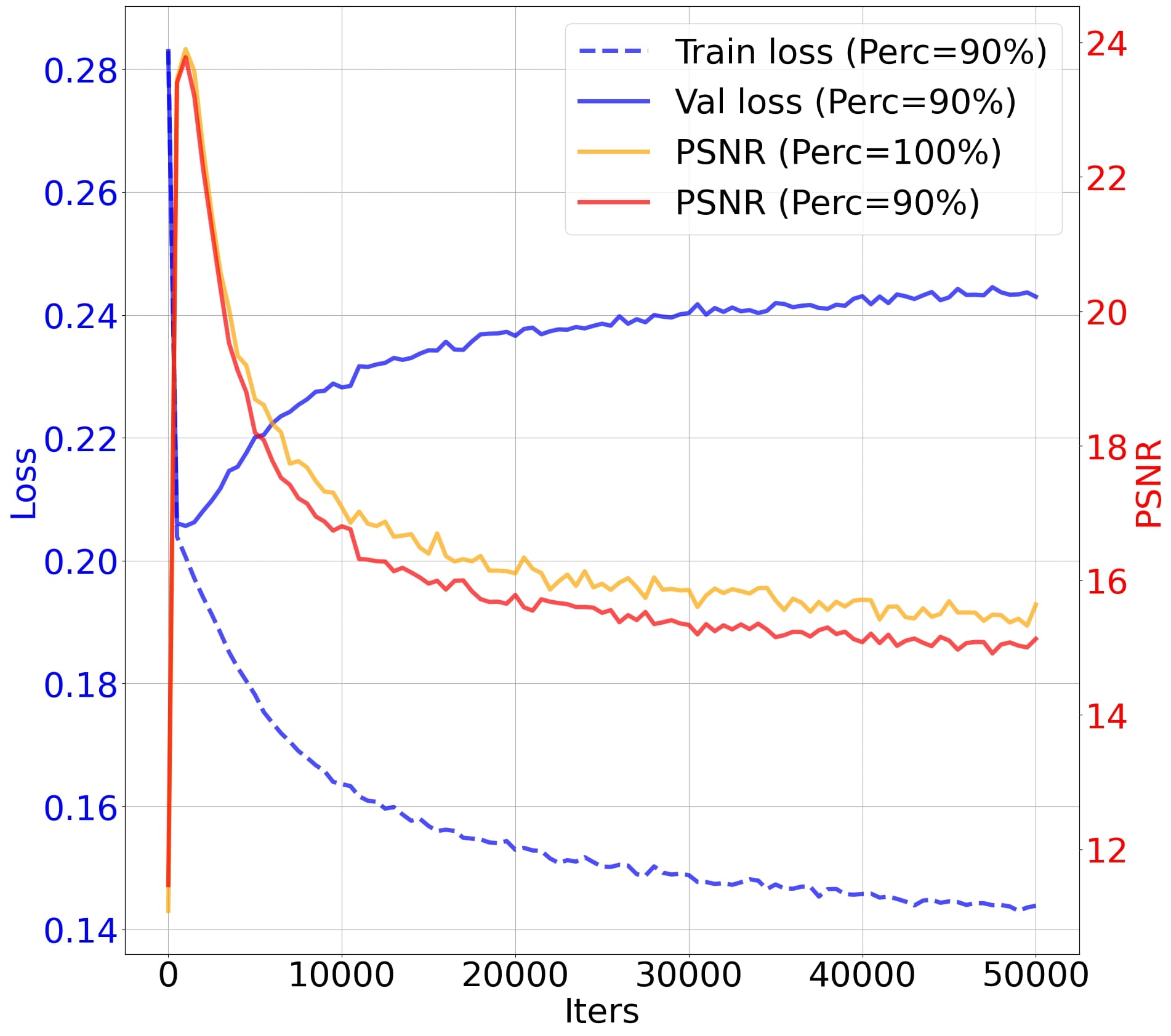}}
    \caption{\textbf{Results across noise degree for gaussian noise}. The top row plots the results for gaussian noise with mean 0 and variance 0.1, the middle row plots the results for gaussian noise with mean 0 and variance 0.2, and the bottom row plots the result for gaussian noise with mean 0 and variance 0.3.
    }
    \label{fig:DIP_gs_nlevel}
\end{figure}

\begin{figure}[t]
    \centering
    \subfloat[Noisy Image]{\includegraphics[width=0.23\textwidth]{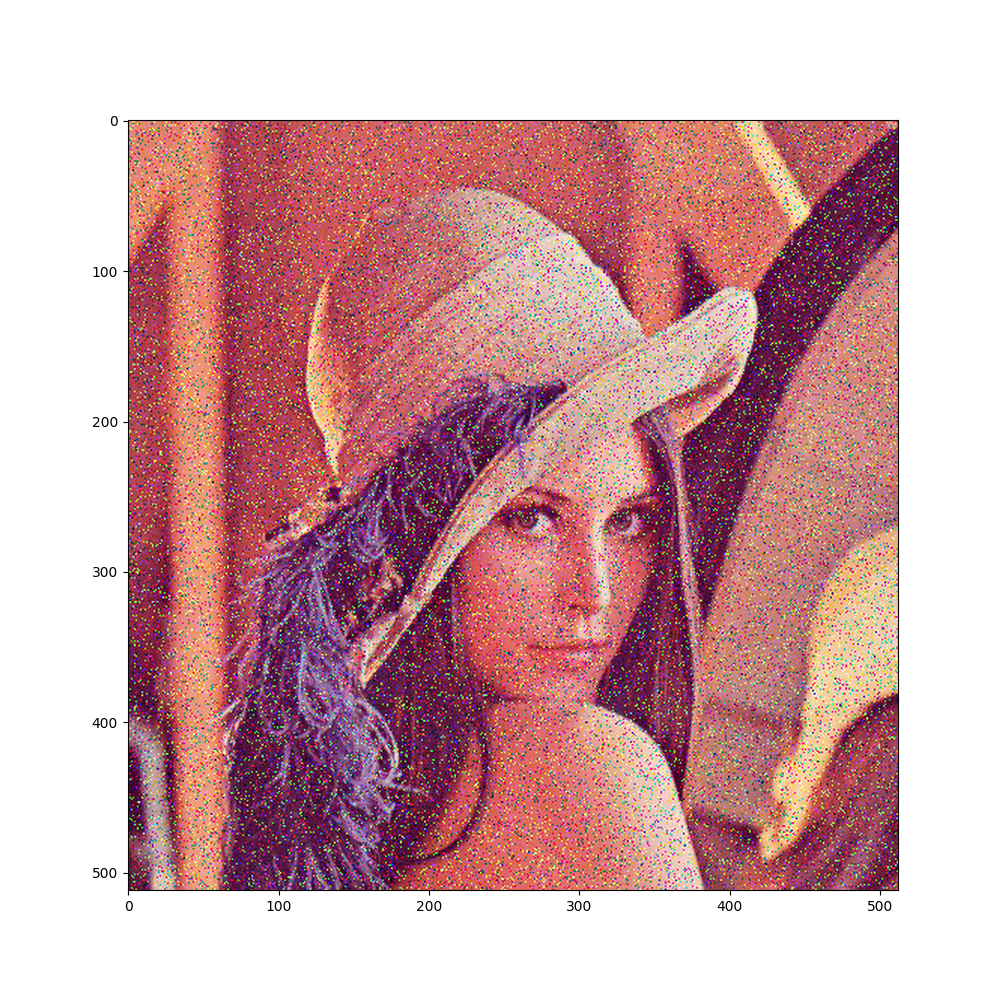}} \
    \subfloat[Best Val Loss (35.4) ]{\includegraphics[width=0.23\textwidth]{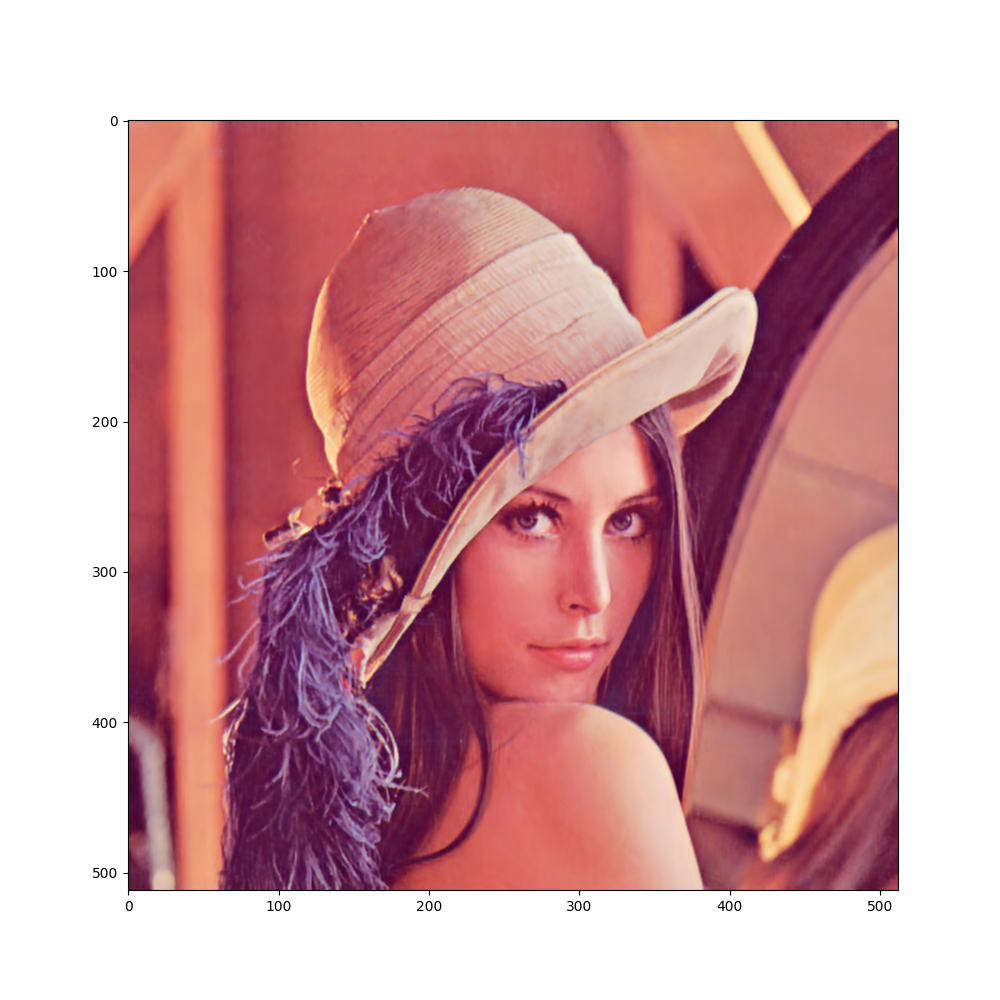}} \
    \subfloat[Best PSNR (35.5)]{\includegraphics[width=0.23\textwidth]{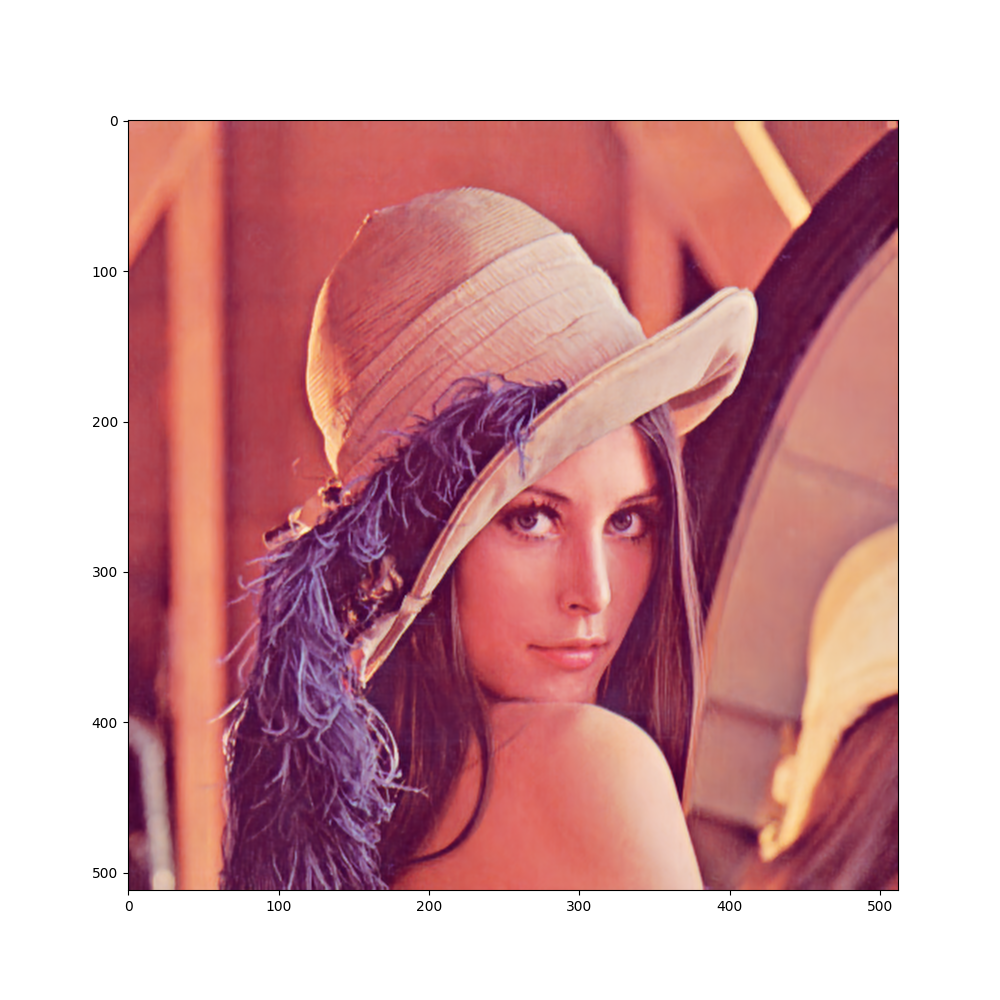}} \
    \subfloat[Progress]{\includegraphics[width=0.23\textwidth]{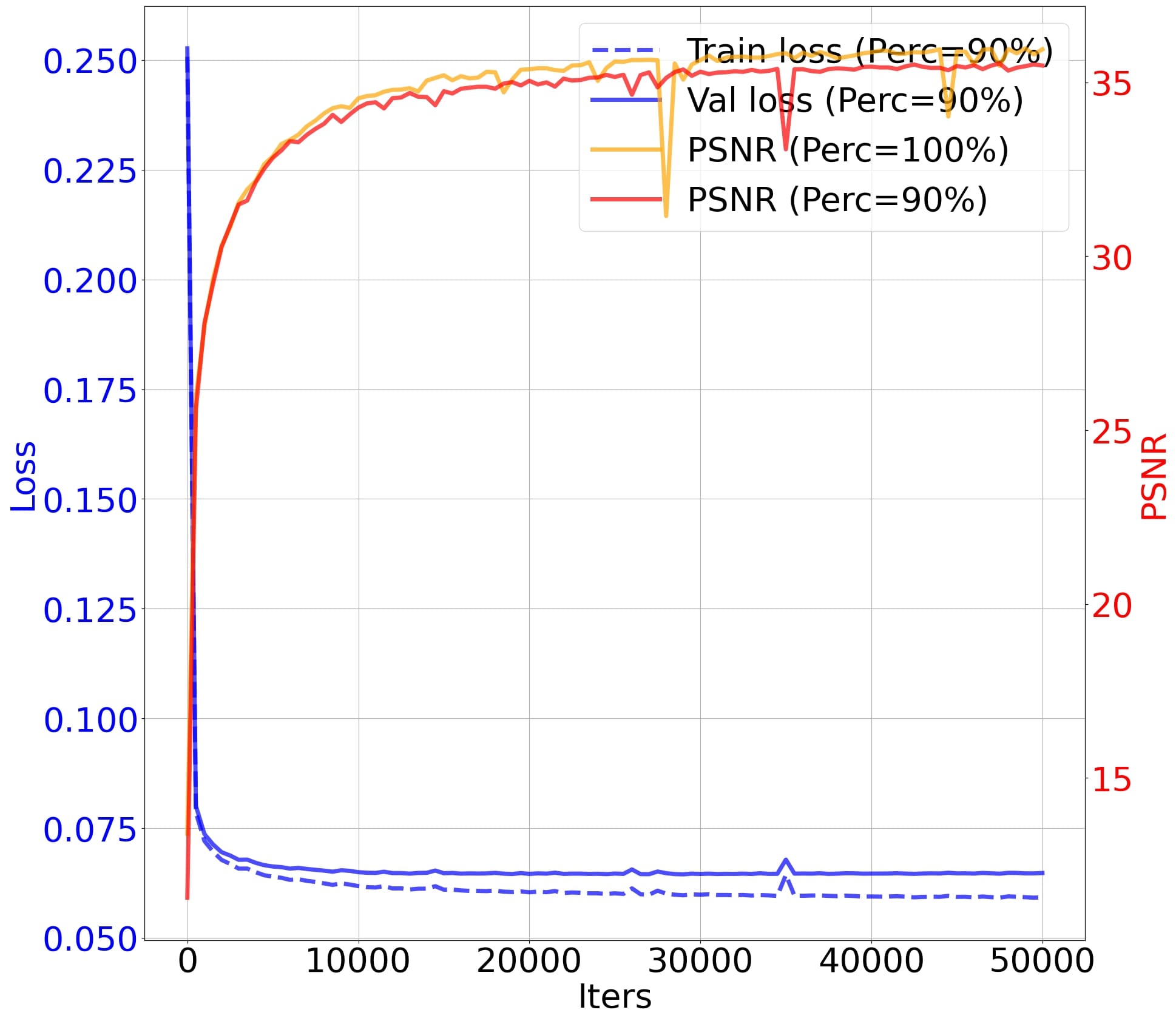}}\\
    
    \subfloat[Noisy Image]{\includegraphics[width=0.23\textwidth]{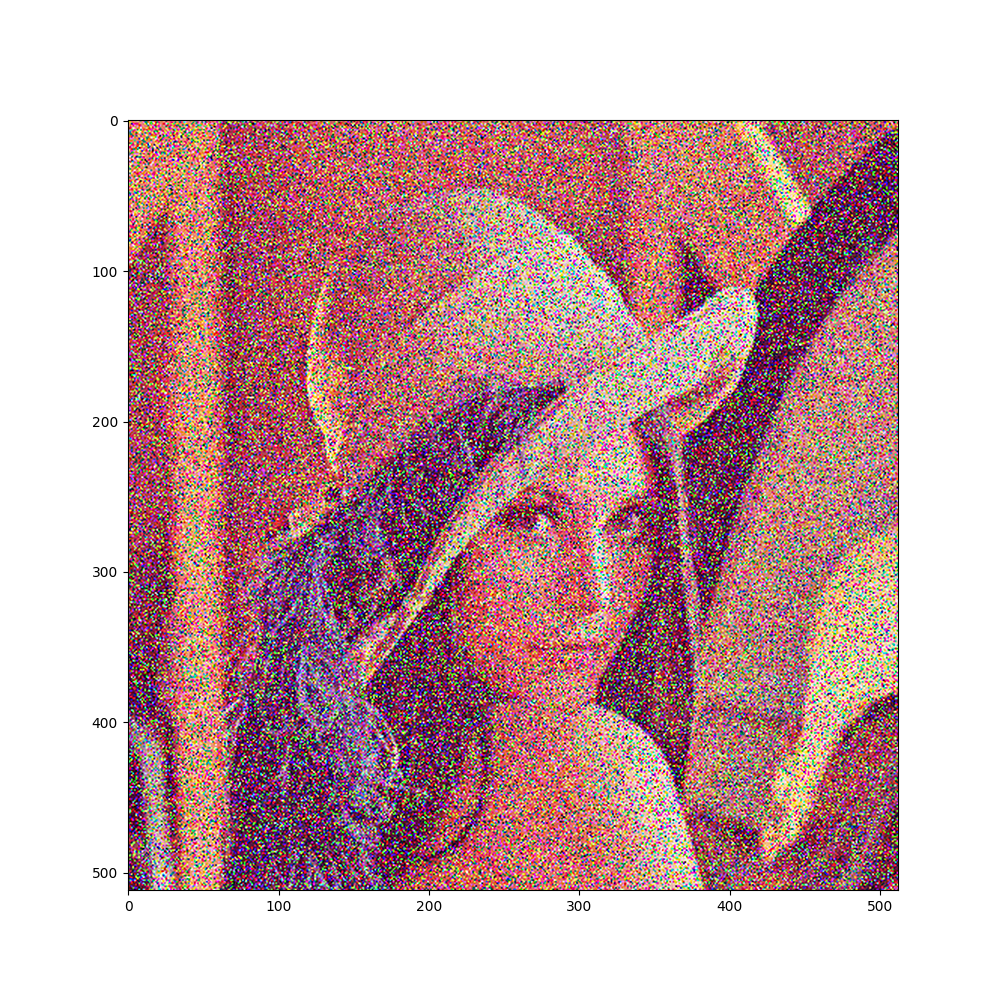}} \
    \subfloat[Best Val Loss (33.0) ]{\includegraphics[width=0.23\textwidth]{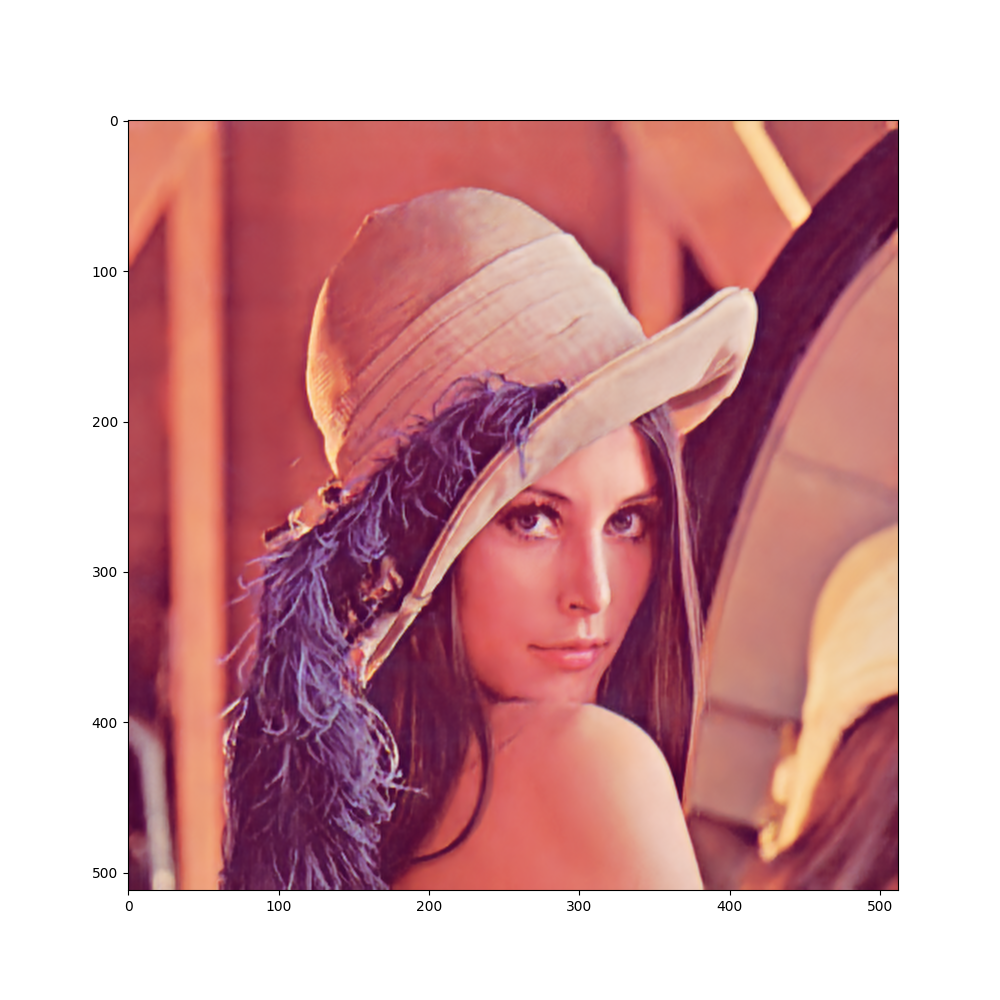}} \
    \subfloat[Best PSNR (33.0)]{\includegraphics[width=0.23\textwidth]{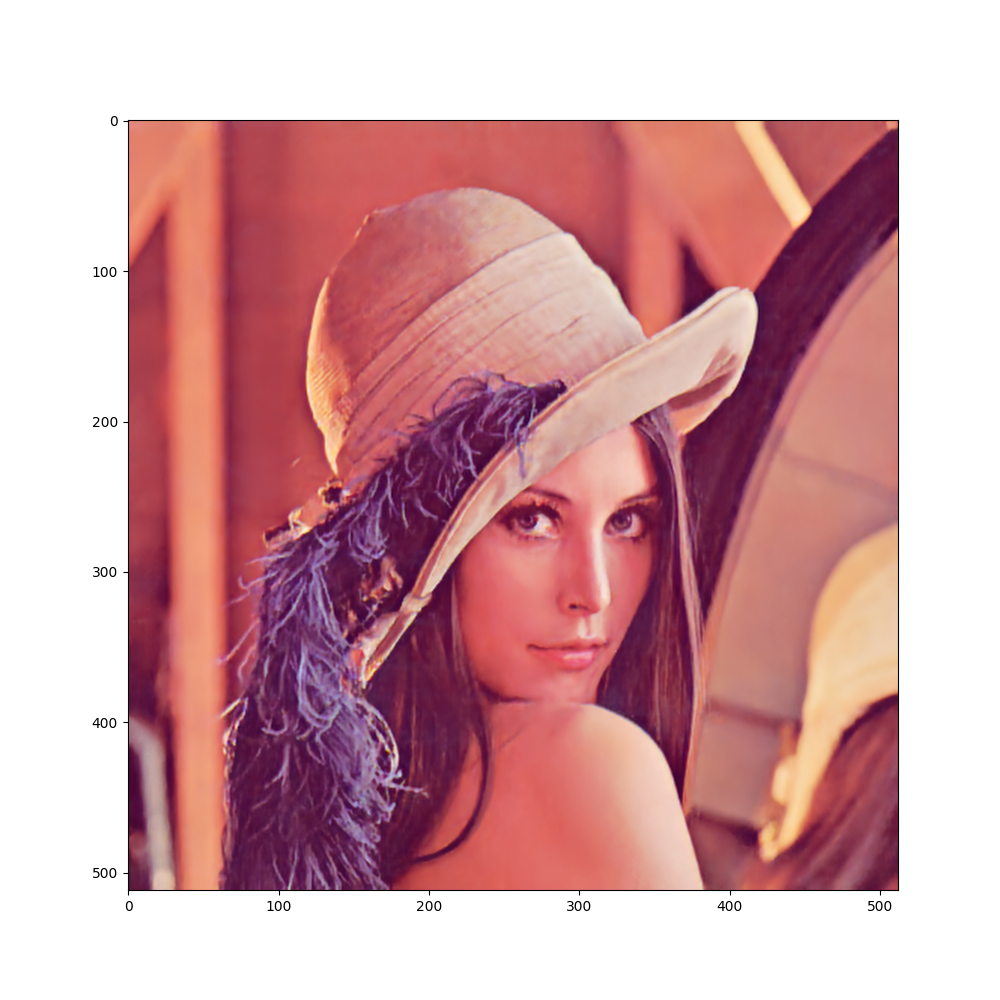}} \
    \subfloat[Progress]{\includegraphics[width=0.23\textwidth]{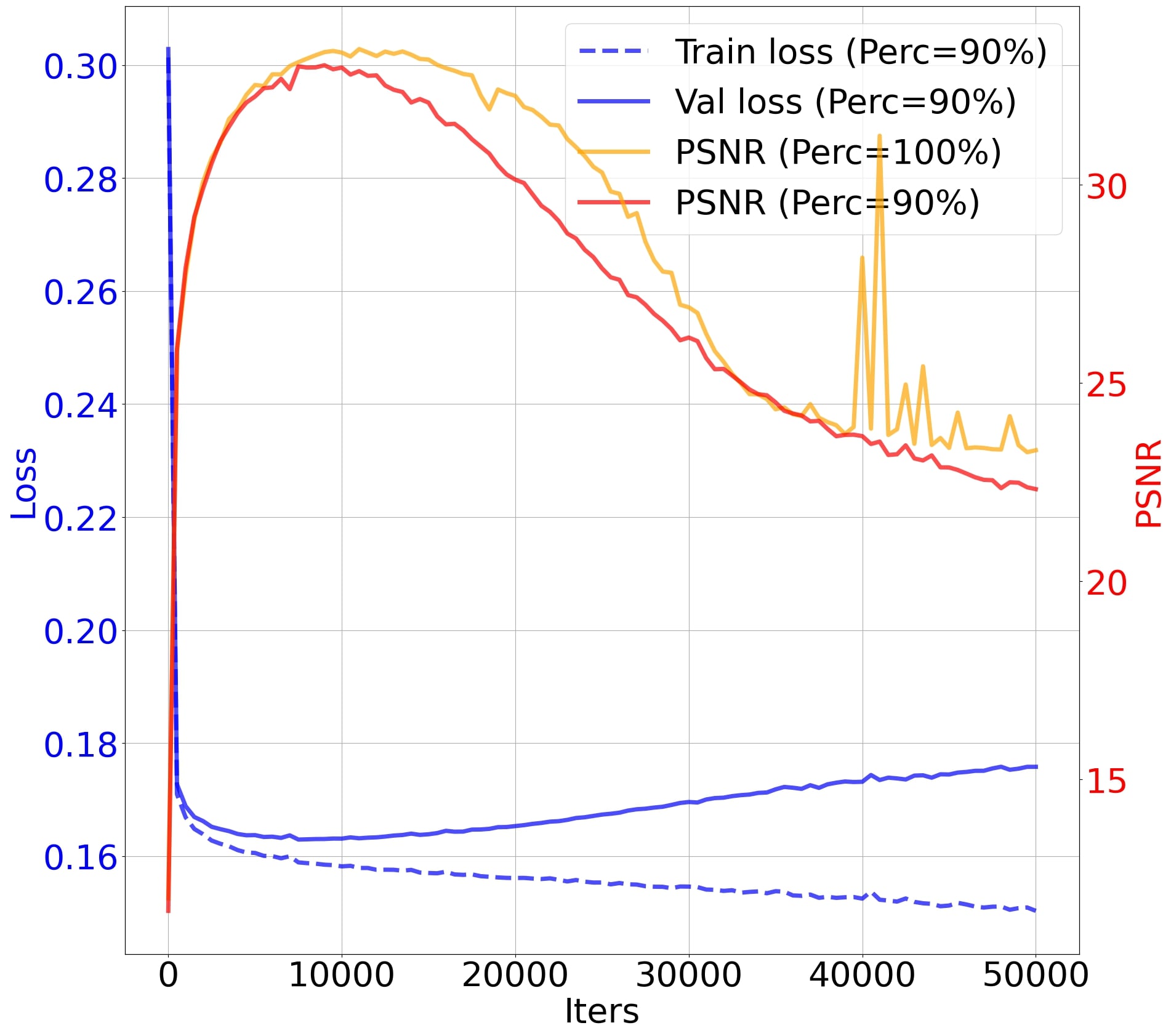}}\\
    
    \subfloat[Noisy Image]{\includegraphics[width=0.23\textwidth]{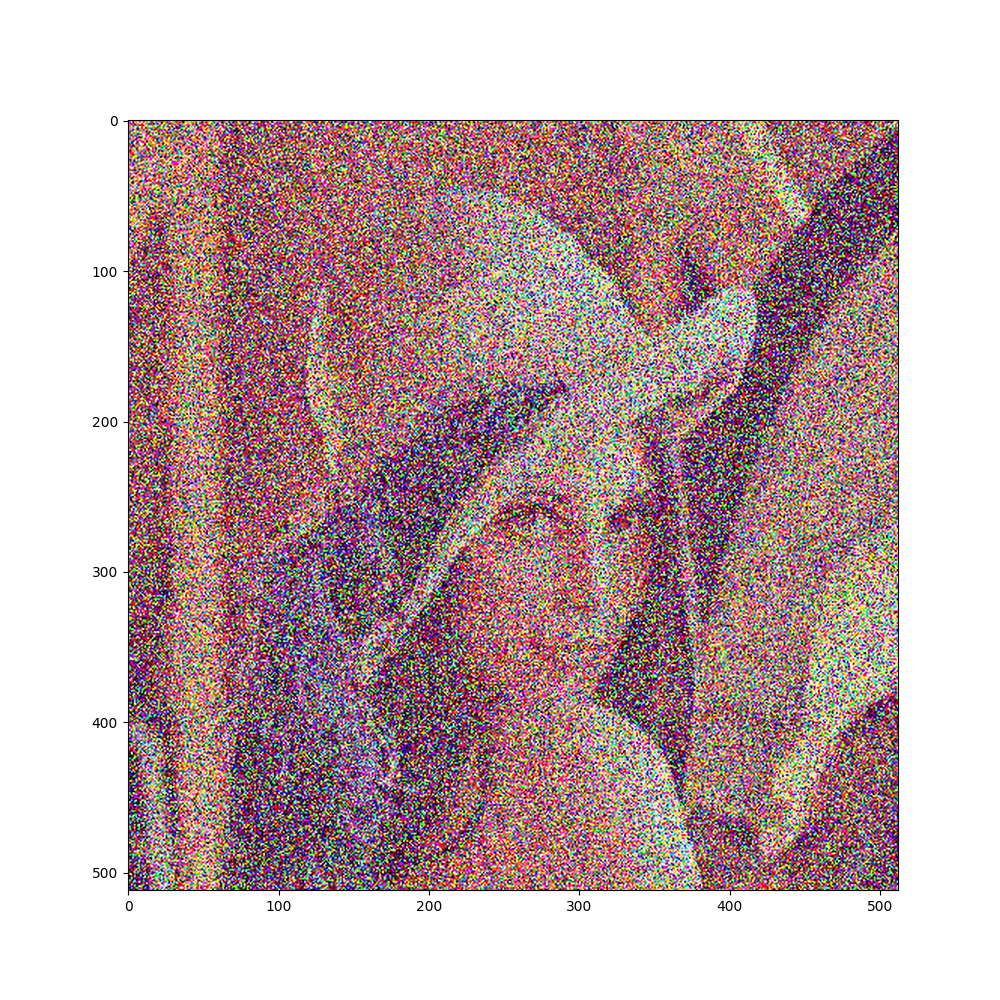}} \
    \subfloat[Best Val Loss (29.9) ]{\includegraphics[width=0.23\textwidth]{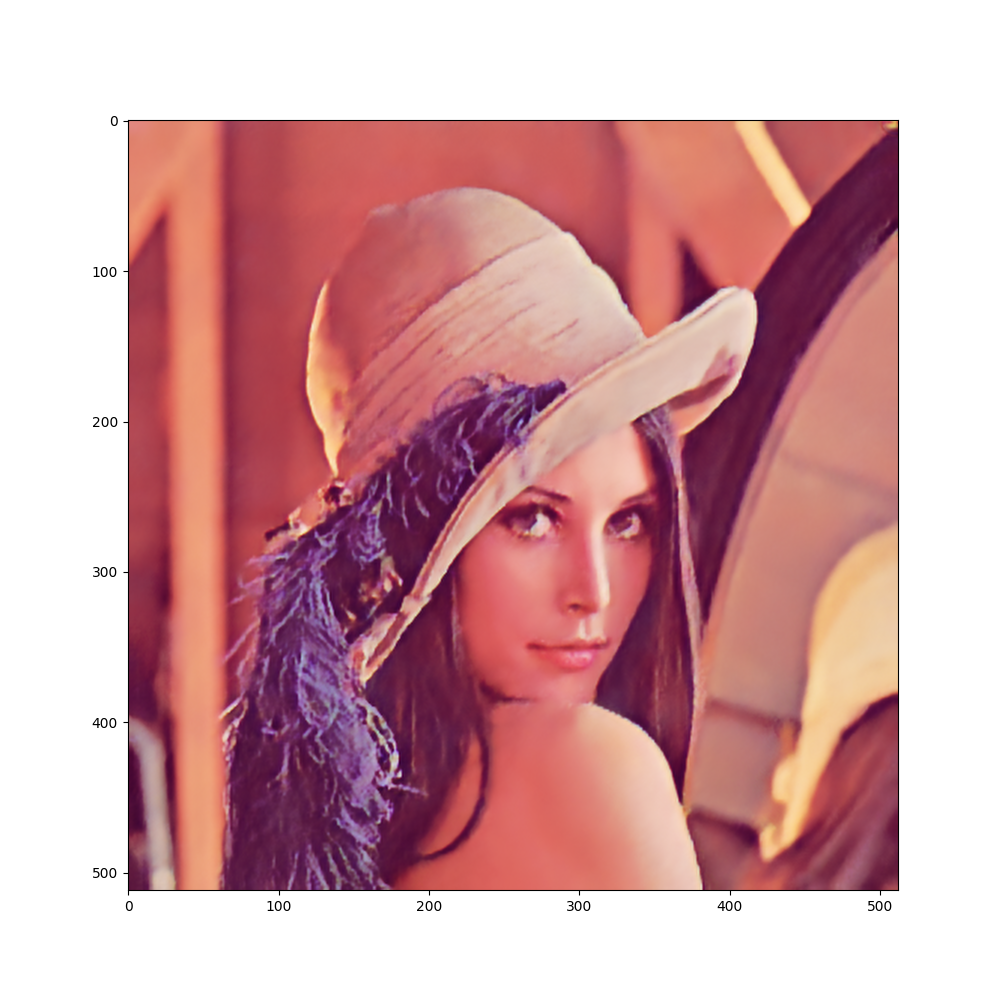}} \
    \subfloat[Best PSNR (29.9)]{\includegraphics[width=0.23\textwidth]{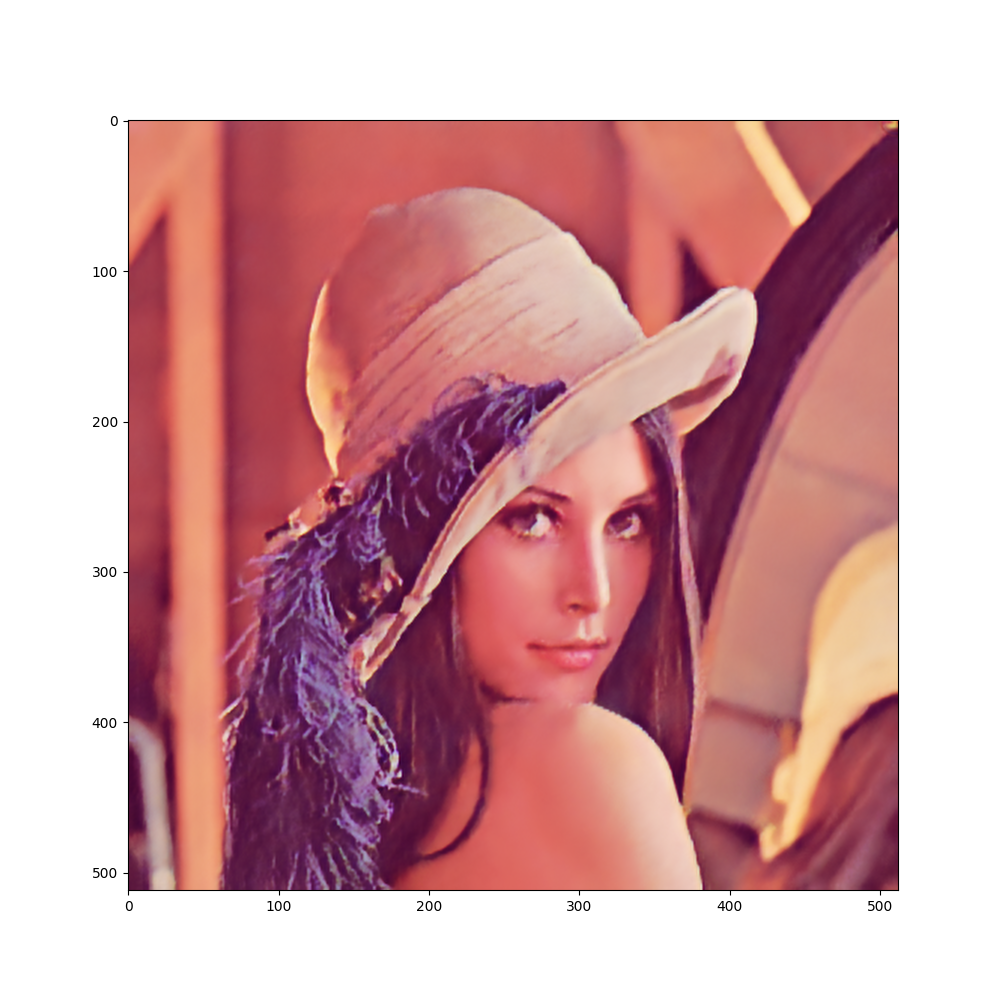}} \
    \subfloat[Progress]{\includegraphics[width=0.23\textwidth]{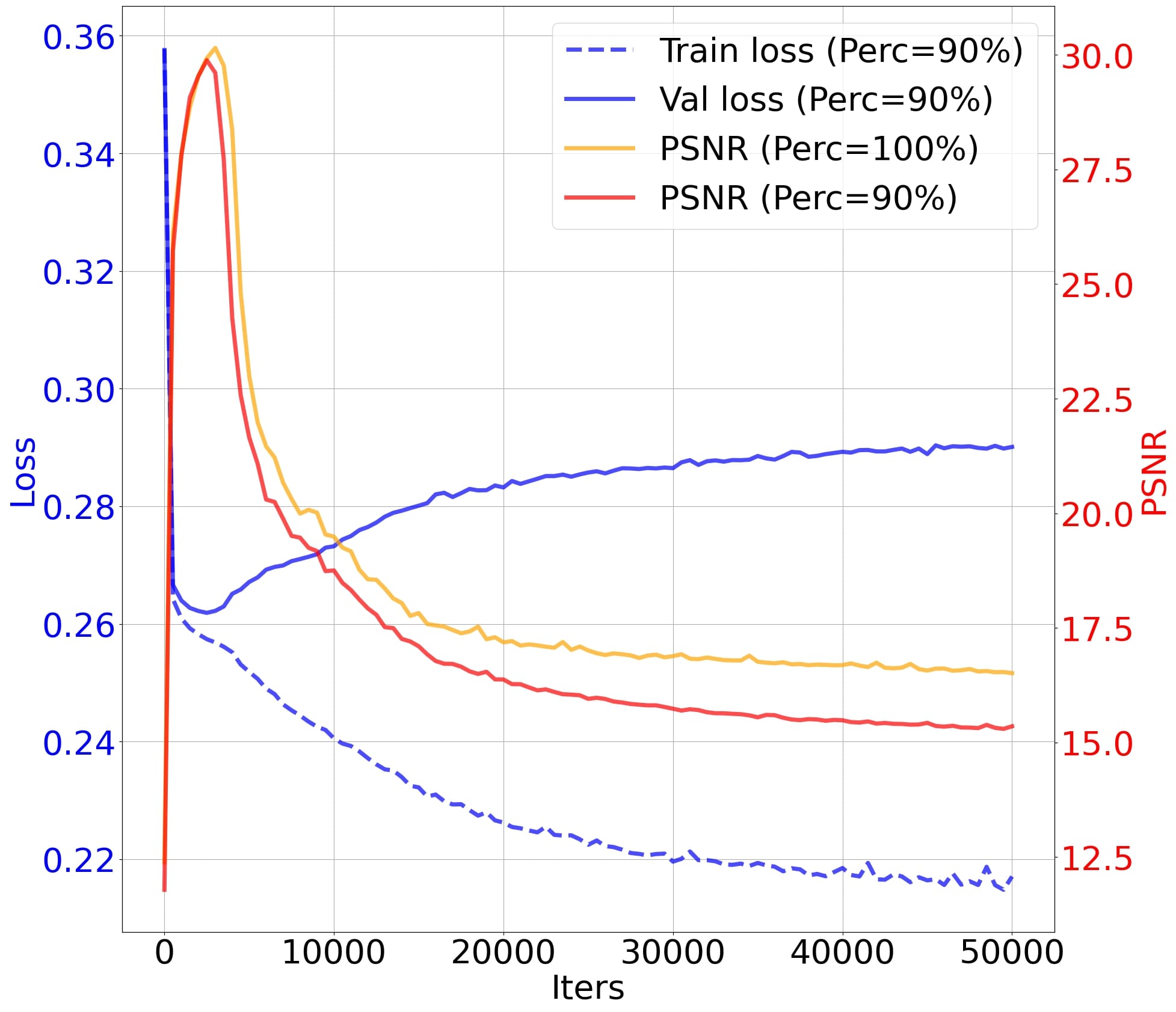}}\\
    \caption{\textbf{Results across noise degree for salt and pepper noise}. The top row plots the results for 10 percent of corrupted pixels, the middle row plots the results for 30 percent of corrupted pixels, and the bottom row plots the result for 50 percent of corrupted pixels.
    }
    \label{fig:DIP_sp_nlevel}
\end{figure}

\end{document}